\documentclass{article}
\usepackage{amssymb}
\usepackage{amsmath}
\usepackage{amsfonts}

\setcounter{MaxMatrixCols}{10}

\newtheorem{theorem}{Theorem}

\newtheorem{corollary}[theorem]{Corollary}

\newtheorem{definition}[theorem]{Definition}

\newtheorem{lemma}[theorem]{Lemma}

\newtheorem{proposition}[theorem]{Proposition}
\newtheorem{remark}[theorem]{Remark}

\newenvironment{proof}[1][Proof]{\noindent\textbf{#1.} }{\ \rule{0.5em}{0.5em}}
\input{tcilatex}
\begin{document}

\title{High-jet Relations of the Heat Kernel\\
Embedding Map and Applications}
\author{Ke Zhu}
\maketitle

\begin{abstract}
For any compact Riemannian manifold $\left( M,g\right) $ and its heat kernel
embedding map $\psi _{t}:M\rightarrow l^{2}$ constructed in \cite{BBG}, we
study the higher derivatives of $\psi _{t}$ with respect to an orthonormal
basis at $x$ on $M$. \ As the heat flow time $t\rightarrow 0_{+}$, it turns
out the limiting angles between these derivative vectors are universal
constants independent on $g$, $x$ and the choice of orthonormal basis.
Geometric applications to the mean curvature and the Riemannian curvature
are given. Some algebraic structures of the $\infty $-jet space of $\psi
_{t} $ are explored.
\end{abstract}

\section{Introduction}

Let $\left( M,g\right) $ be a $n$-dimensional compact Riemannian manifold
with smooth metric $g$. In \cite{BBG}, B\'{e}rard, Besson and Gallot
considered the \emph{heat kernel embedding } 
\begin{equation}
\Phi _{t}:x\in M\rightarrow \left\{ e^{-\lambda _{j}t/2}\phi _{j}\left(
x\right) \right\} _{j\geq 1}\in l^{2}  \label{phi}
\end{equation}%
where $\lambda _{j}$ is the $j$-th eigenvalue of the Laplacian $\Delta _{g}$
of $\left( M,g\right) $, $\left\{ \phi _{j}\right\} _{j\geq 0}$ is the $%
L^{2} $ orthonormal eigenbasis of $\Delta _{g}$, and $l^{2}$ is the Hilbert
space of real-valued, square summable series. Let $\left\langle
,\right\rangle $ be the standard inner product of $l^{2}$, then $%
\left\langle \Phi _{t}\left( x\right) ,\Phi _{t}\left( y\right)
\right\rangle $ is the heat kernel 
\begin{equation*}
H\left( t,x,y\right) =\Sigma _{j=1}^{\infty }e^{-\lambda _{j}t}\phi
_{j}\left( x\right) \phi _{j}\left( y\right)
\end{equation*}%
on $M$. It was proved in \cite{BBG} that the \emph{normalized heat kernel
embedding\ }%
\begin{equation}
\psi _{t}:x\in M\rightarrow \sqrt{2}\left( 4\pi \right) ^{n/4}t^{\frac{n+2}{4%
}}\cdot \left\{ e^{-\lambda _{j}t/2}\phi _{j}\left( x\right) \right\}
_{j\geq 1}\in l^{2}  \label{psi}
\end{equation}%
is \emph{asymptotically isometric}, i.e. the induced metric of the embedded
image $\psi _{t}\left( M\right) $ in $l^{2}$ gets closer and closer to the
original metric $g$ as the heat flow time $t\rightarrow 0_{+}$. More
precisely, as $t\rightarrow 0_{+}$ 
\begin{equation}
\psi _{t}^{\ast }g_{0}=g+\frac{t}{3}\left( \frac{1}{2}S_{g}\cdot g-\text{Ric}%
_{g}\right) +O\left( t^{2}\right) ,  \label{asymp-isom-emb}
\end{equation}%
where $g_{0}$ is the standard metric in $l^{2}$, $S_{g}$ is the scalar
curvature, and Ric$_{g}$ is the Ricci curvature of $\left( M,g\right) $. The
embedding $\psi _{t}$ is \emph{canonical}, in the sense that it is
constructed by the eigenfunctions of the Laplacian of $\left( M,g\right) $.

Given any $x\in M$, we choose the normal coordinates $\left( x^{1},\cdots
x^{n}\right) $ near $x$ such that $\left\{ \frac{\partial }{\partial x^{j}}%
\right\} _{1\leq j\leq n}$ is \emph{orthonormal} at $x$. Let $%
\overrightarrow{\alpha }$ be the multi-index of the mixed derivative
operator $D^{\overrightarrow{\alpha }}$ on this chart. As $t\rightarrow
0_{+} $, it turns out these ($l^{2}$-valued) coefficients $\left\{ D^{%
\overrightarrow{\alpha }}\Phi _{t}\left( x\right) \right\} $ have a special
property: the angles between any two $D^{\overrightarrow{\alpha }}\Phi
_{t}\left( x\right) $ and $D^{\overrightarrow{\beta }}\Phi _{t}\left(
x\right) $ converge to universal constants \emph{independent} on the metric $%
g$, the point $x$ on $M$, and the orthonormal basis $\left\{ V_{i}\right\}
_{1\leq i\leq n}=\left\{ \frac{\partial }{\partial x^{j}}\right\} _{1\leq
j\leq n}$ at $x$, but only on $\overrightarrow{\alpha }$ and $%
\overrightarrow{\beta }$. The asymptote of the length of $D^{\overrightarrow{%
\alpha }}\Phi _{t}\left( x\right) $ only depends on $\overrightarrow{\alpha }
$ and $n$ too (Theorem \ref{high-jet-relation}). These \emph{asymptotic
relations }in the $k$-jet space of $\Phi _{t}$ (or $\psi _{t}$) have
interesting applications:

\begin{enumerate}
\item When $k=1$, \cite{BBG} used them to construct \emph{asymptotically
isometric embeddings} $\psi _{t}=\sqrt{2}\left( 4\pi \right) ^{n/4}t^{\frac{%
n+2}{4}}\Phi _{t}:M\rightarrow l^{2}$ as in $\left( \ref{asymp-isom-emb}%
\right) $. They also used the embedding $\psi _{t}$ to define the \emph{%
spectral distance} between different metrics on $M$. When the metrics have a
uniform upper bound of the diameter and a uniform lower bound of the Ricci
curvature, they established a \emph{precompactness} theorem on the space of
such metrics;

\item When $k=2$, by perturbing the almost isometric embedding $\psi _{t}\,$%
, \cite{WZ} constructed \emph{isometric embeddings} of compact Einstein
manifolds $M$ into $\mathbb{R}^{q\left( t\right) }$ with controlled second
fundamental form and $q\left( t\right) \sim t^{-n/2}$, where the $2$-jet
bundle $J^{2}\left( \psi _{t}\right) $ appeared in the linearized operator
of the isometric embedding problem. It was also showed that for any compact
Riemannian manifolds, the mean curvature vectors of the image $\frac{1}{%
\sqrt{t}}\psi _{t}\left( M\right) $ converge to constant length$\sqrt{\frac{%
n+2}{2n}}$ as $t\rightarrow 0_{+}$;

\item When $k=3$, we use them to show the embedded image $\frac{1}{\sqrt{t}}%
\psi _{t}\left( M\right) \subset l^{2}$ is \emph{asymptotically umbilical}
in the mean curvature vector direction as $t\rightarrow 0_{+}$ (Proposition %
\ref{umbilical}). We also propose to construct constant mean curvature
submanifolds in $\mathbb{R}^{q}$ by truncating and perturbing $\frac{1}{%
\sqrt{t}}\psi _{t}\left( M\right) \subset l^{2}$;

\item When $k=2$, but also considering the secondary leading terms in the
asymptote of the limiting angles, we prove the \textquotedblleft
asymptotic\textquotedblright\ Gauss formula\ (Proposition \ref{Asymp-Gauss})
to express the \emph{Riemannian curvature} tensor of $\left( M,g\right) $ as 
\begin{equation*}
R\left( X,Y,Z,W\right) =\lim_{t\rightarrow 0+}\left[ \left\langle \nabla
_{X}\nabla _{W}\psi _{t},\nabla _{Y}\nabla _{Z}\psi _{t}\right\rangle
-\left\langle \nabla _{X}\nabla _{Z}\psi _{t},\nabla _{Y}\nabla _{W}\psi
_{t}\right\rangle \right] .
\end{equation*}%
From this formula we can easily see the symmetry of the Riemannian curvature
tensor, including the second Bianchi identity (Lemma \ref{Bianchi-Identity}%
). We can also express the \emph{Levi-Civita connection} in terms of $\psi
_{t}$ (Lemma \ref{covariant-deri}).
\end{enumerate}

For example, the following $2$-jet relations were proved in \cite{WZ} and
played a crucial role to construct isometric embeddings of $M$ into $\mathbb{%
R}^{q}$. Let $\left\langle ,\right\rangle $ be the standard inner product in 
$l^{2}$ and $\left\vert \cdot \right\vert $ be the standard metric in $l^{2}$%
. We have

\begin{theorem}
\label{uni-lin-indep}(\cite{WZ} Theorem 2) For any $x\in M$, let $\left(
x^{1},\cdots x^{n}\right) $ be the normal coordinates near $x$ such that $%
\left\{ \frac{\partial }{\partial x^{j}}\right\} _{1\leq j\leq n}$ is \emph{%
orthonormal} at $x$. Then for the $n$ first derivatives vectors $\nabla
_{i}\Phi _{t}$ ($1\leq i\leq n$) and the $n\left( n+1\right) /2$ second
derivatives vectors $\nabla _{i}\nabla _{j}\Phi _{t}$ ($1\leq i\leq j\leq n$%
) of $\Phi _{t}$ with respect to $\left\{ \frac{\partial }{\partial x^{j}}%
\right\} _{1\leq j\leq n}$, as $t\rightarrow 0_{+}$ we have 
\begin{equation*}
\frac{\left\langle \nabla _{i}\Phi _{t}\left( x\right) ,\nabla _{j}\Phi
_{t}\left( x\right) \right\rangle }{\left\vert \nabla _{i}\Phi _{t}\left(
x\right) \right\vert \left\vert \nabla _{j}\Phi _{t}\left( x\right)
\right\vert }\rightarrow \delta _{ij},\ \text{and}\ \frac{\left\langle
\nabla _{i}\nabla _{j}\Phi _{t}\left( x\right) ,\nabla _{k}\Phi _{t}\left(
x\right) \right\rangle }{\left\vert \nabla _{i}\nabla _{j}\Phi _{t}\left(
x\right) \right\vert \left\vert \nabla _{k}\Phi _{t}\left( x\right)
\right\vert }\rightarrow 0,
\end{equation*}%
and for $i\neq j$ or $k\neq l$, 
\begin{equation*}
\frac{\left\langle \nabla _{i}\nabla _{j}\Phi _{t}\left( x\right) ,\nabla
_{k}\nabla _{l}\Phi _{t}\left( x\right) \right\rangle }{\left\vert \nabla
_{i}\nabla _{j}\Phi _{t}\left( x\right) \right\vert \left\vert \nabla
_{k}\nabla _{l}\Phi _{t}\left( x\right) \right\vert }\rightarrow 0,
\end{equation*}%
except when $\left\{ i,j\right\} =\left\{ k,l\right\} $ as sets.
Furthermore, for $i\neq j$,%
\begin{equation}
\frac{\left\langle \nabla _{i}\nabla _{i}\Phi _{t}\left( x\right) ,\nabla
_{j}\nabla _{j}\Phi _{t}\left( x\right) \right\rangle }{\left\vert \nabla
_{i}\nabla _{i}\Phi _{t}\left( x\right) \right\vert \left\vert \nabla
_{j}\nabla _{j}\Phi _{t}\left( x\right) \right\vert }\rightarrow \frac{1}{3}.
\label{uni-lin-independent}
\end{equation}%
The convergence is uniform for all $x$ on $M$.
\end{theorem}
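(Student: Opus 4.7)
The plan is to reduce every inner product in the statement to a mixed partial derivative of the scalar heat kernel, extract the leading $t$-asymptotics from the Minakshisundaram--Pleijel (MP) parametrix by comparing to a flat Gaussian model, and then read off the limiting ratios by an elementary Wick-type combinatorial count.

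First I would observe that $\left\langle \Phi _{t}(x),\Phi _{t}(y)\right\rangle =H(t,x,y)$, so termwise differentiation of the eigenfunction series yields
\begin{equation*}
\left\langle D^{\overrightarrow{\alpha }}\Phi _{t}(x),D^{\overrightarrow{\beta }}\Phi _{t}(y)\right\rangle =D_{x}^{\overrightarrow{\alpha }}D_{y}^{\overrightarrow{\beta }}H(t,x,y),
\end{equation*}
and every inner product in the theorem becomes a mixed derivative of $H$ of total order at most $4$, evaluated on the diagonal $y=x$. Next I would invoke the MP expansion
\begin{equation*}
H(t,x,y)=(4\pi t)^{-n/2}e^{-d(x,y)^{2}/(4t)}\left( u_{0}(x,y)+tu_{1}(x,y)+\cdots \right)
\end{equation*}
with $u_{0}(x,x)\equiv 1$, and work in normal coordinates at $x$; in these coordinates $d(x,y)^{2}-|x-y|^{2}$ vanishes to order $4$ at the diagonal, so the leading $t^{-n/2-2}$ contribution of every fourth-order mixed derivative coincides with that of the flat Gaussian $f(x,y)=(4\pi t)^{-n/2}e^{-|x-y|^{2}/(4t)}$.

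A direct calculation with $u:=x-y$ then yields, on the diagonal,
\begin{equation*}
\partial _{x^{i}}\partial _{y^{j}}f\big|_{x=y}=\frac{\delta _{ij}}{2t}(4\pi t)^{-n/2},
\end{equation*}
\begin{equation*}
\partial _{x^{i}}\partial _{x^{j}}\partial _{y^{k}}\partial _{y^{l}}f\big|_{x=y}=\frac{\delta _{ij}\delta _{kl}+\delta _{ik}\delta _{jl}+\delta _{il}\delta _{jk}}{4t^{2}}(4\pi t)^{-n/2},
\end{equation*}
with all odd-order mixed derivatives vanishing at $y=x$ by parity in $u$. Plugging in the index patterns one obtains the leading behavior $|\nabla _{i}\Phi _{t}|^{2}\sim \frac{1}{2t}(4\pi t)^{-n/2}$, $|\nabla _{i}\nabla _{i}\Phi _{t}|^{2}\sim \frac{3}{4t^{2}}(4\pi t)^{-n/2}$, $|\nabla _{i}\nabla _{j}\Phi _{t}|^{2}\sim \frac{1}{4t^{2}}(4\pi t)^{-n/2}$ for $i\neq j$, and $\langle \nabla _{i}\nabla _{i}\Phi _{t},\nabla _{j}\nabla _{j}\Phi _{t}\rangle \sim \frac{1}{4t^{2}}(4\pi t)^{-n/2}$ for $i\neq j$; all other cross inner products fail both pairings in the $\delta$-sum and are of strictly lower order. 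Taking ratios gives $\delta _{ij}$, $0$, $0$, and $\tfrac{1}{3}=(1/(4t^{2}))/(3/(4t^{2}))$ as required.

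The main technical obstacle will be bookkeeping the error terms to justify that only the flat Gaussian contributes at leading order. Concretely I would verify that each correction arising from (a) the higher MP coefficients $t^{k}u_{k}$ with $k\geq 1$, or (b) the $O(|x-y|^{4})$ discrepancy between $d(x,y)^{2}$ and $|x-y|^{2}$ in normal coordinates, supplies an extra factor of $t$ or of $|x-y|^{2}$ which, after at most four differentiations and restriction to the diagonal, produces only $O(t^{-n/2-1})$ and hence cannot perturb the ratios. Finally, because the MP expansion and all of its spatial derivatives are uniformly valid on the compact manifold $M$, the convergence of the stated ratios is automatically uniform in $x$.
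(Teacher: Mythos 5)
Your proposal is correct and it follows the same underlying mechanism as the paper, but it packages the combinatorics differently. The paper proves the far more general Theorem \ref{high-jet-relation} by expanding $D_{y}^{\overrightarrow{\beta}}D_{x}^{\overrightarrow{\alpha}}H$ via the Leibniz rule into products of derivatives of $-r^{2}/4t$ and of $U$, invoking Lemma \ref{elimination-tools} for the coincidence limits of $r^{2}$, and then counting the surviving leading-order summands through the ``admissible graph'' apparatus of Proposition \ref{Graph-computation}; Theorem \ref{uni-lin-indep} is then recovered by evaluating $A$ and $B$ at the specific multi-indices. You instead specialize from the start to orders $\le 4$, replace the parametrix by the flat Gaussian $f(x,y)=(4\pi t)^{-n/2}e^{-|x-y|^{2}/(4t)}$ using the fact that $r^{2}-|x-y|^{2}$ is $O(|x-y|^{4})$ near the diagonal (this is exactly the content of identities $(\ref{1st-r})$--$(\ref{3rd-r})$), and read the leading coefficients $\delta_{ij}\delta_{kl}+\delta_{ik}\delta_{jl}+\delta_{il}\delta_{jk}$ off the Gaussian's fourth moments. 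That is precisely the Wick/Isserlis count, which the paper itself notes (in the remark after Proposition \ref{Graph-computation}) is what the graph enumeration is a manifestation of. So the two arguments are mathematically equivalent; yours is a shorter, hands-on version that is sufficient for the $2$-jet case, while the paper's graph formalism is designed to scale to arbitrary $\overrightarrow{\alpha},\overrightarrow{\beta}$.

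Two small places where you should tighten the write-up before calling it a proof rather than a proposal. First, in your error analysis (b) the statement that $d(x,y)^{2}-|x-y|^{2}$ ``vanishes to order $4$ at the diagonal'' must be formulated as a coincidence-limit statement about the $x,y$-derivatives of $r^{2}$ at $x=y$ (i.e. $(\ref{1st-r})$--$(\ref{3rd-r})$), not as a claim about $d^{2}(x,y)$ versus the coordinate expression $|x-y|^{2}$ for two general points in one fixed normal chart, and you need to note that these coincidence limits are smooth (hence uniform) in the base point on the compact $M$. Second, the parity argument kills the odd-order terms only for the flat model; for the actual $H$ the third-order mixed derivative at $x=y$ is nonzero but of size $O(t^{-n/2})$ (one $1/(2t)$ factor from a second derivative of $-r^{2}/4t$ times $\nabla U=O(t)$, using $(\ref{u0_deri})$ and $(\ref{U_deri})$), which is dominated by the denominator's $t^{-n/2-3/2}$; that bound, not parity alone, is what makes the ratio in the mixed first-and-second derivative case go to zero.
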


\bigskip\ In this paper, we generalize Theorem \ref{uni-lin-indep} to obtain
the limiting angles between any two derivative vectors $D^{\overrightarrow{%
\alpha }}\Phi _{t}\left( x\right) $ and $D^{\overrightarrow{\beta }}\Phi
_{t}\left( x\right) $ as $t\rightarrow 0_{+}$. It is a refinement of
Proposition 15 in \cite{WZ}; The new observation is that the coefficients
there can be translated into certain enumeration problems of graphs and can
be solve explicitly (Proposition \ref{Graph-computation}).

In the following we denote the disjoint union of two sets by $\amalg $, and
let $\left\vert \overrightarrow{\alpha }\right\vert $ be the total degree of
the derivative operator $D^{\overrightarrow{\alpha }}$.

\begin{definition}
\label{constants} For any two derivative vectors $D^{\overrightarrow{\alpha }%
}\Phi _{t}\left( x\right) $ and $D^{\overrightarrow{\beta }}\Phi _{t}\left(
x\right) $, suppose there are $s$ distinct indices $\left\{
j_{1},j_{2},\cdots ,j_{s}\right\} $ in $\left\{ \overrightarrow{\alpha }%
\right\} \amalg \left\{ \overrightarrow{\beta }\right\} $, such that each $%
j_{r}$ has multiplicity $a_{r}$ in $\overrightarrow{\alpha }$, multiplicity $%
b_{r}$ in $\overrightarrow{\beta }$, and let the average multiplicity\emph{\ 
}$\sigma _{r}:=\frac{a_{r}+b_{r}}{2}$ for $r=1,2,\cdots ,s$. If all $%
a_{r}+b_{r}$ are even, we define the constants 
\begin{eqnarray}
A\left( \overrightarrow{\alpha },\overrightarrow{\beta }\right) &=&\left(
-1\right) ^{\frac{\left\vert \overrightarrow{\alpha }\right\vert -\left\vert 
\overrightarrow{\beta }\right\vert }{2}}{\LARGE \Pi }_{r=1}^{s}\left[ \frac{%
\left( 2\sigma _{r}\right) !}{2^{\sigma _{r}}\left( \sigma _{r}\right) !}%
\right] ,  \label{constant_A} \\
B\left( \overrightarrow{\alpha },\overrightarrow{\beta }\right) &=&\left.
A\left( \overrightarrow{\alpha },\overrightarrow{\beta }\right) \right/ %
\left[ A\left( \overrightarrow{\alpha },\overrightarrow{\alpha }\right)
A\left( \overrightarrow{\beta },\overrightarrow{\beta }\right) \right] ^{1/2}
\notag \\
&=&\left( -1\right) ^{\frac{\left\vert \overrightarrow{\alpha }\right\vert
-\left\vert \overrightarrow{\beta }\right\vert }{2}}{\LARGE \Pi }%
_{r=1}^{s}\left. \frac{\left( 2\sigma _{r}\right) !}{\sigma _{r}!}\right/ %
\left[ \frac{\left( 2a_{r}\right) !}{a_{r}!}\frac{\left( 2b_{r}\right) !}{%
b_{r}!}\right] ^{1/2}.  \label{constant_B}
\end{eqnarray}%
If some $a_{r}+b_{r}$ is odd, we simply let%
\begin{equation}
A\left( \overrightarrow{\alpha },\overrightarrow{\beta }\right) =0=B\left( 
\overrightarrow{\alpha },\overrightarrow{\beta }\right) .  \label{zero}
\end{equation}
\end{definition}

We have our main

\begin{theorem}
\label{high-jet-relation}(Asymptotic high-jet relations) For any two
derivative vectors $D^{\overrightarrow{\alpha }}\Phi _{t}\left( x\right) $
and $D^{\overrightarrow{\beta }}\Phi _{t}\left( x\right) $, as $t\rightarrow
0_{+}$,

\begin{enumerate}
\item If there is an index appeared with odd total multiplicity in $\left\{ 
\overrightarrow{\alpha }\right\} \amalg \left\{ \overrightarrow{\beta }%
\right\} $, then 
\begin{equation}
\frac{\left\langle D^{\overrightarrow{\alpha }}\Phi _{t}\left( x\right) ,D^{%
\overrightarrow{\beta }}\Phi _{t}\left( x\right) \right\rangle }{\left\vert
D^{\overrightarrow{\alpha }}\Phi _{t}\left( x\right) \right\vert \left\vert
D^{\overrightarrow{\beta }}\Phi _{t}\left( x\right) \right\vert }\rightarrow
0.  \label{ortho}
\end{equation}%
Especially this is the case when $\left\vert \overrightarrow{\alpha }%
\right\vert +\left\vert \overrightarrow{\beta }\right\vert $ is odd;

\item If each index appears with even total multiplicity in $\left\{ 
\overrightarrow{\alpha }\right\} \amalg \left\{ \overrightarrow{\beta }%
\right\} $, then 
\begin{equation}
\frac{\left\langle D^{\overrightarrow{\alpha }}\Phi _{t}\left( x\right) ,D^{%
\overrightarrow{\beta }}\Phi _{t}\left( x\right) \right\rangle }{\left\vert
D^{\overrightarrow{\alpha }}\Phi _{t}\left( x\right) \right\vert \left\vert
D^{\overrightarrow{\beta }}\Phi _{t}\left( x\right) \right\vert }\rightarrow
B\left( \overrightarrow{\alpha },\overrightarrow{\beta }\right) \neq 0;
\label{angle}
\end{equation}

\item We have 
\begin{equation}
\left\vert D^{\overrightarrow{a}}\Phi _{t}\left( x\right) \right\vert
^{2}\rightarrow \frac{1}{\left( 4\pi t\right) ^{n/2}}\left( \frac{1}{2t}%
\right) ^{\left\vert \overrightarrow{\alpha }\right\vert }A\left( 
\overrightarrow{\alpha },\overrightarrow{\alpha }\right) .  \label{length}
\end{equation}
\end{enumerate}

The convergence is uniform for all $x$ on $M$.
\end{theorem}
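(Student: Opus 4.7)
The plan is to route everything through the heat kernel. Since $\langle \Phi_t(x),\Phi_t(y)\rangle = H(t,x,y)$, differentiating in both slots gives the basic identity
\[
\langle D^{\overrightarrow{\alpha}}\Phi_t(x), D^{\overrightarrow{\beta}}\Phi_t(y)\rangle = D_x^{\overrightarrow{\alpha}} D_y^{\overrightarrow{\beta}} H(t,x,y),
\]
so upon setting $x=y=x_0$ all three claims reduce to the short-time behavior of $D_x^{\overrightarrow{\alpha}} D_y^{\overrightarrow{\beta}} H(t,x,y)|_{x=y=x_0}$ in the chosen normal coordinates. I would then insert the Minakshisundaram--Pleijel parametrix
\[
H(t,x,y) = \frac{1}{(4\pi t)^{n/2}}\, e^{-d(x,y)^2/(4t)}\bigl(u_0(x,y) + t\,u_1(x,y) + \cdots\bigr) + R_N(t,x,y),
\]
where $u_0(x_0,x_0)=1$, the remainder $R_N$ is $C^\infty$-negligible on compacts, and in normal coordinates at $x_0$ one has $d(x,y)^2 = |x-y|^2 + O((|x|+|y|)^4)$ with Riemann-curvature corrections.

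Under the parabolic rescaling $x=\sqrt{t}\,\xi$, $y=\sqrt{t}\,\eta$, one has $D_x^{\overrightarrow{\alpha}}D_y^{\overrightarrow{\beta}} = t^{-(|\overrightarrow{\alpha}|+|\overrightarrow{\beta}|)/2}D_\xi^{\overrightarrow{\alpha}}D_\eta^{\overrightarrow{\beta}}$; the curvature tail of $d^2/(4t)$ becomes $O(t)$; each $u_k(\sqrt{t}\xi,\sqrt{t}\eta)$ tends to $u_k(x_0,x_0)$; and $R_N$ is relatively $O(t^N)$. Hence the leading contribution is the purely Euclidean one,
\[
\frac{1}{(4\pi t)^{n/2}}\, t^{-(|\overrightarrow{\alpha}|+|\overrightarrow{\beta}|)/2}\, D_\xi^{\overrightarrow{\alpha}}D_\eta^{\overrightarrow{\beta}} e^{-|\xi-\eta|^2/4}\bigr|_{\xi=\eta=0}\cdot(1+o(1)).
\]
To evaluate the Gaussian derivative, translation invariance rewrites it as $(-1)^{|\overrightarrow{\beta}|} D_u^{\overrightarrow{\alpha}+\overrightarrow{\beta}} e^{-|u|^2/4}|_{u=0}$; factoring componentwise and using $\partial_u^{2m} e^{-u^2/4}|_{u=0} = (-1)^m(2m)!/(m!\,4^m)$ together with the vanishing of odd-order derivatives gives $0$ unless every $a_r+b_r=2\sigma_r$ is even, in which case the product equals $(-1)^{|\overrightarrow{\beta}|+\sum_r\sigma_r}\cdot 2^{-(|\overrightarrow{\alpha}|+|\overrightarrow{\beta}|)/2}\prod_r(2\sigma_r)!/\sigma_r!$. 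Since $|\overrightarrow{\beta}|+\sum_r\sigma_r \equiv (|\overrightarrow{\alpha}|-|\overrightarrow{\beta}|)/2 \pmod 2$, this is exactly $A(\overrightarrow{\alpha},\overrightarrow{\beta})\cdot 2^{-(|\overrightarrow{\alpha}|+|\overrightarrow{\beta}|)/2}$; combining with the rescaling prefactor produces (\ref{length}) for $\overrightarrow{\alpha}=\overrightarrow{\beta}$, while the normalized ratio yields (\ref{angle}), and (\ref{ortho}) follows because $A(\overrightarrow{\alpha},\overrightarrow{\beta})=0$ whenever some $a_r+b_r$ is odd, so the numerator is strictly of lower order in $1/t$ than the denominator.

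The main obstacle is the uniform bookkeeping of errors: every Leibniz term in which a derivative lands on a smooth factor ($u_k$, the remainder $R_N$, or the $O(|x-y|^4)$ tail of $d^2$) rather than on the Gaussian produces $O(1)$ instead of $O(t^{-1/2})$, and the rescaling argument above confirms each such term is strictly subleading. Uniformity in $x_0\in M$ is then inherited from the smooth dependence of $u_k$, $R_N$ and the curvature expansion on the base point together with compactness of $M$. One pleasant interpretation of the final combinatorial factor $\prod_r (2\sigma_r)!/(2^{\sigma_r}\sigma_r!)=\prod_r(2\sigma_r-1)!!$ is that it counts perfect matchings of the multiset $\{\overrightarrow{\alpha}\}\amalg\{\overrightarrow{\beta}\}$ pairing only equal indices --- this is the graph-enumeration picture invoked in Proposition \ref{Graph-computation}, and is exactly Wick's theorem for the centered Gaussian moments that emerged from our parabolic rescaling.
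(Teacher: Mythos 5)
Your proposal is correct and reaches the same identities, but organizes the calculation differently from the paper. The paper works directly with the Leibniz expansion of $D_y^{\overrightarrow{\beta}}D_x^{\overrightarrow{\alpha}}\bigl[e^{-r^2/4t}U\bigr]$, tracking the maximal power of $1/t$ produced by factors $D^{\overrightarrow{\mu_j}}(-r^2/4t)|_{x=y}$ and then counting, via the admissible-graph picture of Proposition \ref{Graph-computation}, how many summands achieve that power. You instead perform a parabolic rescaling $x=x_0+\sqrt{t}\,\xi$, $y=x_0+\sqrt{t}\,\eta$, so that all Leibniz terms are repackaged into a single derivative $D_\xi^{\overrightarrow{\alpha}}D_\eta^{\overrightarrow{\beta}}\bigl[e^{-|\xi-\eta|^2/4}\cdot G(t,\xi,\eta)\bigr]$ with $G=1+O(t)$, and then you evaluate the remaining constant-coefficient Gaussian derivative in closed form via $\partial_u^{2m}e^{-u^2/4}|_{u=0}=(-1)^m(2m)!/(4^m m!)$. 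The graph enumeration in the paper and your Gaussian-moment evaluation are two faces of the same Wick/Isserlis formula, a point the paper itself remarks on; your route trades the explicit combinatorial count for the textbook closed form, at the cost of having to verify (as you do) that all rescaled correction terms from $u_k$, from the $O(|x-y|^4)$ tail of $r^2$, and from the parametrix remainder are $o(1)$ uniformly on $M$. Both approaches rest on the same inputs: the Minakshisundaram--Pleijel expansion and Lemma \ref{elimination-tools} (which supplies the vanishing of the low-order odd derivatives of $r^2$ at $x=y$ that you need to make the rescaled tail $O(t)$ rather than $O(\sqrt{t})$), and both handle the odd-multiplicity case by noting that the would-be leading term vanishes, leaving the numerator strictly subleading relative to the product of the norms from (\ref{length}).

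One small computational slip to flag: in your intermediate expression for $(-1)^{|\overrightarrow{\beta}|}D_u^{\overrightarrow{\alpha}+\overrightarrow{\beta}}e^{-|u|^2/4}|_{u=0}$ the power of $2$ should be $2^{-(|\overrightarrow{\alpha}|+|\overrightarrow{\beta}|)}$, not $2^{-(|\overrightarrow{\alpha}|+|\overrightarrow{\beta}|)/2}$, since $\prod_r 4^{-\sigma_r}=4^{-(|\overrightarrow{\alpha}|+|\overrightarrow{\beta}|)/2}$. Your final identification of this quantity with $A(\overrightarrow{\alpha},\overrightarrow{\beta})\cdot 2^{-(|\overrightarrow{\alpha}|+|\overrightarrow{\beta}|)/2}$ is nevertheless correct (the extra $2^{-\sum_r\sigma_r}$ is absorbed by the $2^{\sigma_r}$ inside $A$), so the conclusion and the matching with the prefactor $\frac{1}{(4\pi t)^{n/2}}\,t^{-(|\overrightarrow{\alpha}|+|\overrightarrow{\beta}|)/2}$ to yield $(4\pi t)^{-n/2}(2t)^{-(|\overrightarrow{\alpha}|+|\overrightarrow{\beta}|)/2}A(\overrightarrow{\alpha},\overrightarrow{\beta})$ all go through.
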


\bigskip For example, for $n\geq 2$, $i\neq j$, $\overrightarrow{\alpha }%
=\left( i,i\right) $ and $\overrightarrow{\beta }=\left( j,j\right) $, we
recover Theorem 2 in \cite{WZ}, because 
\begin{eqnarray*}
A\left( \overrightarrow{\alpha },\overrightarrow{\beta }\right) &=&{\LARGE %
\Pi }_{r=1}^{2}\left[ \left( -1\right) ^{1}\frac{2!}{2^{1}\cdot 1!}\right]
=1, \\
A\left( \overrightarrow{\alpha },\overrightarrow{\alpha }\right) &=&{\Large %
\Pi }_{r=1}^{1}\left[ \left( -1\right) ^{0}\frac{4!}{2^{2}\cdot 2!}\right]
=3=A\left( \overrightarrow{\beta },\overrightarrow{\beta }\right) , \\
B\left( \overrightarrow{\alpha },\overrightarrow{\beta }\right) &=&\left.
A\left( \overrightarrow{\alpha },\overrightarrow{\beta }\right) \right/ %
\left[ A\left( \overrightarrow{\alpha },\overrightarrow{\alpha }\right)
A\left( \overrightarrow{\beta },\overrightarrow{\beta }\right) \right]
^{1/2}=\frac{1}{3}.
\end{eqnarray*}

Theorem \ref{high-jet-relation} is a consequence of the more general

\begin{proposition}
\label{jet-relation}As $t\rightarrow 0_{+}$, we have%
\begin{equation}
\left( 4\pi t\right) ^{n/2}\left( 2t\right) ^{\left[ \frac{\left\vert 
\overrightarrow{\alpha }\right\vert +\left\vert \overrightarrow{\beta }%
\right\vert }{2}\right] }\cdot \left. D_{y}^{\overrightarrow{\beta }}D_{x}^{%
\overrightarrow{\alpha }}H\left( t,x,y\right) \right\vert _{x=y}=A\left( 
\overrightarrow{\alpha },\overrightarrow{\beta }\right) +O\left( t\right) ,
\label{leading-coeff}
\end{equation}%
where $\left[ c\right] $ is the largest integer less or equal to a given
real number $c$.
\end{proposition}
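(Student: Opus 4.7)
My plan is to invoke the Minakshisundaram--Pleijel parametrix for $H$ and reduce the computation to an explicit Euclidean Gaussian derivative at the origin. First, I fix $p\in M$ and work in normal coordinates centred at $p$. In such coordinates, geodesics through $0$ are straight lines, so $d(z,0)=|z|$, and therefore
\[
\phi(x,y):=d(x,y)^{2}-|x-y|^{2}
\]
vanishes identically on $\{x=0\}\cup\{y=0\}$. By Hadamard's lemma applied twice, $\phi(x,y)=\sum_{i,j}x^{i}y^{j}\psi_{ij}(x,y)$, and a direct computation using the normal-coordinate expansion $g_{ij}(z)=\delta_{ij}+O(|z|^{2})$ sharpens this to $\phi$ vanishing to order $\ge 4$ at $(0,0)$ (the leading quartic terms being determined by the curvature at $p$). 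Using the parametrix
\[
H(t,x,y)=(4\pi t)^{-n/2}e^{-d^{2}/(4t)}\sum_{k=0}^{N}u_{k}(x,y)t^{k}+R_{N}(t,x,y),
\]
with $u_{0}(x,x)=1$ and $R_{N}$ together with all its derivatives of order $O(t^{N+1-n/2})$ for $N$ chosen large, I factor $e^{-d^{2}/(4t)}=e^{-|x-y|^{2}/(4t)}\cdot e^{-\phi/(4t)}$ and rewrite $H=(4\pi t)^{-n/2}e^{-|x-y|^{2}/(4t)}F(t,x,y)+R_{N}$ with $F:=e^{-\phi/(4t)}\sum_{k}u_{k}t^{k}$.

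The core computation is then explicit. Because the Euclidean exponent is a sum over coordinates,
\[
D^{\overrightarrow{\alpha}}_{x}D^{\overrightarrow{\beta}}_{y}e^{-|x-y|^{2}/(4t)}\big|_{x=y=0}=\prod_{i=1}^{n}D^{a_{i}}_{x^{i}}D^{b_{i}}_{y^{i}}e^{-(x^{i}-y^{i})^{2}/(4t)}\big|_{x^{i}=y^{i}=0}.
\]
Each one-dimensional factor, read off from the Taylor series $e^{-u^{2}/(4t)}=\sum_{m}(-1)^{m}u^{2m}/(m!(4t)^{m})$, vanishes unless $a_{i}+b_{i}=2\sigma_{i}$ is even, in which case it equals $(-1)^{b_{i}+\sigma_{i}}(2\sigma_{i})!/(\sigma_{i}!(4t)^{\sigma_{i}})$. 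Taking the product, using $\prod_{r}4^{\sigma_{r}}=2^{(|\overrightarrow{\alpha}|+|\overrightarrow{\beta}|)/2}\prod_{r}2^{\sigma_{r}}$, and checking the sign identity $(-1)^{|\overrightarrow{\beta}|+\sum\sigma_{r}}=(-1)^{(|\overrightarrow{\alpha}|-|\overrightarrow{\beta}|)/2}\pmod{2}$, I obtain precisely $A(\overrightarrow{\alpha},\overrightarrow{\beta})/(2t)^{(|\overrightarrow{\alpha}|+|\overrightarrow{\beta}|)/2}$ when every $a_{r}+b_{r}$ is even, and $0$ otherwise.

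Next, I apply Leibniz to $D^{\overrightarrow{\alpha}}_{x}D^{\overrightarrow{\beta}}_{y}[F\cdot e^{-|x-y|^{2}/(4t)}]|_{x=y=0}$, splitting by how many derivatives land on $F$ versus on the Gaussian. The term with no derivative on $F$, combined with $F(t,0,0)=u_{0}(0,0)+O(t)=1+O(t)$ (since $\phi(0,0)=0$), gives the main contribution $A(\overrightarrow{\alpha},\overrightarrow{\beta})/(2t)^{m/2}\cdot(1+O(t))$ where $m=|\overrightarrow{\alpha}|+|\overrightarrow{\beta}|$, producing $A+O(t)$ after multiplication by $(4\pi t)^{n/2}(2t)^{[m/2]}$. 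For the remaining terms with $\ell:=|\overrightarrow{\mu}+\overrightarrow{\nu}|\ge 1$ derivatives on $F$, the Gaussian factor vanishes by parity unless $\overrightarrow{\mu}+\overrightarrow{\nu}$ matches $\overrightarrow{\alpha}+\overrightarrow{\beta}$ coordinate-wise in parity, forcing $\ell\equiv s_{\mathrm{odd}}\pmod{2}$ and $\ell\ge s_{\mathrm{odd}}$ (with $s_{\mathrm{odd}}$ counting coordinates having $a_{r}+b_{r}$ odd). Because $\phi$ vanishes to order $\ge 4$ at the origin, the $e^{-\phi/(4t)}$ factor contributes trivially at $(0,0)$ for $1\le\ell\le 3$, so $D^{\overrightarrow{\mu}}_{x}D^{\overrightarrow{\nu}}_{y}F|_{0}=D^{\overrightarrow{\mu}}_{x}D^{\overrightarrow{\nu}}_{y}u_{0}|_{0}+O(t)$ for $\ell\le 3$, while a straightforward power count handles $\ell\ge 4$. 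In every case the sub-leading terms are $O(t)$ after normalization, except the single borderline case $\ell=1$ with $m$ odd.

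This $\ell=1$, $m$ odd case is the one genuine obstacle. The term reduces at leading order in $t$ to $\partial_{x^{i}}u_{0}(0,0)$ or $\partial_{y^{i}}u_{0}(0,0)$, and I verify both vanish by the symmetries of $u_{0}$. From $H(t,x,y)=H(t,y,x)$ one has $u_{0}(x,y)=u_{0}(y,x)$, so $\partial_{x^{i}}u_{0}=\partial_{y^{i}}u_{0}$ on the diagonal; from $u_{0}(x,x)\equiv 1$, differentiating along the diagonal gives $\partial_{x^{i}}u_{0}(x,x)+\partial_{y^{i}}u_{0}(x,x)=0$. Together these force $\partial_{x^{i}}u_{0}(0,0)=\partial_{y^{i}}u_{0}(0,0)=0$, so even this dangerous term is $O(t)$. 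The remainder $R_{N}$ is negligible for $N$ large, and uniformity in $x\in M$ follows from compactness of $M$ together with the uniform-in-$x$ character of the standard heat kernel parametrix estimates.
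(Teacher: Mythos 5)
Your proof is correct, but it takes a genuinely different route from the paper. The paper works directly with the full parametrix factor $e^{-r^{2}/(4t)}U$, applies Leibniz to produce sums of products of coincidence limits of derivatives of $-r^{2}/(4t)$ (via Lemma \ref{elimination-tools}), and then organizes the leading summands by a bijection with admissible graphs (Proposition \ref{Graph-computation}), which is where the combinatorial factor $\Pi_{r}(2\sigma_{r})!/(2^{\sigma_{r}}\sigma_{r}!)$ appears as a pairing count. You instead split $e^{-r^{2}/(4t)}=e^{-|x-y|^{2}/(4t)}e^{-\phi/(4t)}$ in normal coordinates, compute the Euclidean-Gaussian derivatives in closed form (factoring by coordinate and reading moments off the Taylor series), and absorb the remaining geometry into $F=e^{-\phi/(4t)}U$, showing its contribution is $O(t)$ by the vanishing of $\phi$ to order $\ge 4$ together with the power count and the $\ell=1$ cancellation $\partial_{x^{i}}u_{0}(0,0)=0$. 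The two arguments encode the same information in different places: your $\phi$ vanishing to order $\ge 4$ is exactly the content of the coincidence limits \eqref{1st-r}--\eqref{3rd-r}, and your moment computation on the 1D Gaussian does what the admissible-graph count does in the paper (both are the Wick/Isserlis combinatorics the paper notes in its remark). What your route buys is a more elementary, calculus-level derivation of the combinatorial constant with no graph enumeration; what the paper's route buys is a structure (the graph pairing) that is reused verbatim to prove Proposition \ref{Graph-computation} and to make the Wick-formula connection transparent. One minor point: the claim that $\phi$ vanishes to order $\ge 4$ deserves slightly more care than a gesture at $g_{ij}=\delta_{ij}+O(|z|^{2})$ — the clean justification is that all first, second, and third coincidence limits of $r^{2}-|x-y|^{2}$ vanish (the second because $\partial_{x^{i}}\partial_{y^{j}}d^{2}|_{x=y}=-2\delta_{ij}$ matches the Euclidean value), which is precisely the content of Lemma \ref{elimination-tools} and the vanishing of $\phi$ on $\{x=0\}\cup\{y=0\}$. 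With that made explicit, the argument is complete.
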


After the completion of this paper, the author received the paper \cite{Ni},
which is related to our results. The paper \cite{Ni}, motivated by
probabilistic questions, describes the small $\varepsilon $-asymptotics of
the jets along the diagonal of the integral kernel of the smoothing operator 
$w\left( \varepsilon \sqrt{\Delta _{g}}\right) $ for an arbitrary
nonnegative even Schwartz function $w$. For $w\left( x\right) =e^{-x^{2}}$,
these reduce to the short time asymptotics of the jets along the diagonal of
the heat kernel. 

The paper is organized as follows: In Section \ref{high-jet relations} we
first review the heat kernel and its off-diagonal expansion, then introduce
certain graphs to aid the computation of the leading terms in $\left( \ref%
{leading-coeff}\right) $, and then prove our main Theorem \ref%
{high-jet-relation}. In Section \ref{applications} we give applications of
the high-jet relations of $\psi _{t}$ to the mean curvature of $\psi
_{t}\left( M\right) \subset l^{2}$ and the Riemannian curvature of $M$. In
Section \ref{reformulations} we explore more algebraic structures of the
high-jet relations, and reformulate them by the lattice geometry on $\mathbb{%
Z}_{+}^{n}$, with the motivation for future applications.

\textbf{Acknowledgement.} The author thanks Xiaowei Wang for many
enlightenigng discussions during the collaboration of \cite{WZ}, where some
ideas of the current work emerged. The author thanks Liviu I. Nicolaescu for
pointing out the probabilistic point of view on our results. He also thanks
Clifford Taubes for his interest and support. The work of K. Zhu is
partially supported by the grant of Clifford Taubes from the National
Science Foundation.

\section{High-jet relations\label{high-jet relations}}

\subsection{Heat kernel and derivatives of the distance function}

Let $H\left( t,x,y\right) $ be the heat kernel of the Laplacian $\Delta _{g}$
on $\left( M,g\right) $,%
\begin{equation}
H\left( t,x,y\right) :=\Sigma _{j=1}^{\infty }e^{-\lambda _{j}t}\phi
_{j}\left( x\right) \phi _{j}\left( y\right) .  \label{heat-kernel}
\end{equation}%
It is well known that $H\left( t,x,y\right) $ has the \emph{%
Minakshisundaram-Pleijel expansion} 
\begin{equation*}
H\left( t,x,y\right) =\frac{1}{\left( 4\pi t\right) ^{n/2}}e^{-\frac{r^{2}}{%
4t}}U\left( t,x,y\right) 
\end{equation*}%
(e.g. \cite{BeGaM}, p.213, or \cite{Ch}, p.154), where $r=r\left( x,y\right) 
$ is the distance function for points $x$ and $y$ on $M$, 
\begin{equation}
U\left( t,x,y\right) =u_{0}\left( x,y\right) +tu_{1}\left( x,y\right)
+\cdots +t^{p}u_{p}\left( x,y\right) +O\left( t^{p+1}\right) 
\label{U-expansion}
\end{equation}%
as $t\rightarrow 0_{+}$, where the convergence is in $C^{l}$ for any $l\geq 0
$, and 
\begin{eqnarray}
u_{0}\left( x,y\right)  &=&\left[ \theta \left( x,y\right) \right] ^{-1/2}%
\text{ (for }x\text{ and }y\text{ close enough), }  \notag \\
u_{1}\left( x,x\right)  &=&\frac{S_{g}\left( x\right) }{6},  \label{u0u1}
\end{eqnarray}%
where 
\begin{equation}
\theta \left( x,y\right) =\frac{\text{volume density at }y\text{ read in the
normal coordinates around }x}{r^{n-1}}  \label{theta_xy}
\end{equation}%
with $r=r\left( x,y\right) $ (\cite{BeGaM}, p. 208). Given any $x$ on $M$,
let $\left( x^{1},\cdots ,x^{n}\right) $ be the normal coordinates around $x$
and $\left\{ V_{j}\right\} _{j=1}^{n}=\left\{ \frac{\partial }{\partial x^{j}%
}\right\} _{j=1}^{n}$ be an orthonormal basis at $x$. For any unit vector $%
V\in T_{x}M$, it was derived in \cite{BBG} that for $x_{s}=\exp _{x}\left(
sV\right) $ and $x_{\tau }=\exp _{x}\left( \tau V\right) $ with small $s$
and $\tau $, 
\begin{equation*}
\theta \left( x_{s},x_{\tau }\right) =1-\text{Ric}_{g}\left( \dot{x}\left(
s\right) ,\dot{x}\left( s\right) \right) \frac{\left( s-\tau \right) ^{2}}{3!%
}+O\left( \left\vert s-\tau \right\vert ^{3}\right) ,\text{ }
\end{equation*}%
so%
\begin{equation*}
u_{0}\left( x_{s},x_{\tau }\right) =1+\frac{1}{12}\text{Ric}_{g}\left( \dot{x%
}\left( s\right) ,\dot{x}\left( s\right) \right) \left( s-\tau \right)
^{2}+O\left( \left\vert s-\tau \right\vert ^{3}\right) .
\end{equation*}%
Hence 
\begin{eqnarray}
u_{0}\left( x,x\right)  &=&1,  \label{u0xx} \\
\nabla _{i}u_{0}\left( x,y\right) |_{x=y} &=&0,  \label{u0_deri} \\
\nabla _{i}U\left( t,x,y\right) |_{x=y} &=&O\left( t\right) ,  \label{U_deri}
\\
\nabla _{i}^{x}\nabla _{j}^{y}u_{0}\left( x,y\right) |_{x=y} &=&-\frac{1}{6}%
\text{Ric}_{g}\left( V_{i},V_{j}\right) ,  \label{u0_2nd_deri}
\end{eqnarray}

For later computations, we will need the following facts on the derivatives
of the squared distance function $r^{2}\left( x,y\right) $ on the diagonal $%
x=y$.

\begin{lemma}
\label{elimination-tools} For $r=r\left( x,y\right) $ on $M\times M$, let $%
\nabla _{i}=\nabla _{V_{j}}^{x}$ be the partial derivative with respect to $%
V_{j}$ in the $x$ variables, $\nabla _{\bar{j}}=\nabla _{V_{j}}^{y}$ be the
partial derivative with respect to $V_{j}$ in the $y$ variables, $\nabla
_{ij}=\nabla _{V_{i}}^{x}\nabla _{V_{j}}^{x}$, $\nabla _{i\overline{j}%
}=\nabla _{V_{i}}^{x}\nabla _{V_{j}}^{y}$, $\nabla _{ij\overline{k}}=$ $%
\nabla _{V_{i}}^{x}\nabla _{V_{j}}^{x}\nabla _{V_{k}}^{y}$ and so on. Then
we have the following identities 
\begin{eqnarray}
\nabla _{i}r^{2}\left( x,y\right) |_{x=y} &=&0=\nabla _{\bar{\imath}%
}r^{2}\left( x,y\right) |_{x=y},  \label{1st-r} \\
\nabla _{ij}r^{2}\left( x,y\right) |_{x=y} &=&2\delta _{ij}=-\nabla _{i%
\overline{j}}r^{2}\left( x,y\right) |_{x=y},  \label{2nd-r} \\
\nabla _{ijk}r^{2}\left( x,y\right) |_{x=y} &=&\nabla _{ij\overline{k}%
}r^{2}|_{x=y}=\nabla _{i\overline{j}\overline{k}}r^{2}|_{x=y}=\nabla _{%
\overline{i}\overline{j}\overline{k}}r^{2}|_{x=y}=0,  \label{3rd-r} \\
\nabla _{ijkl}r^{2}\left( x,y\right) |_{x=y} &=&-\nabla _{ij\overline{k}%
\overline{l}}r^{2}|_{x=y}\left( x,y\right) =-\frac{2}{3}\left(
R_{ikjl}\left( x\right) +R_{iljk}\left( x\right) \right) ,  \label{4th-r}
\end{eqnarray}%
where $R_{ijkl}\left( x\right) =R\left( V_{i},V_{j},V_{k},V_{l}\right)
\left( x\right) $ is the Riemannian curvature.
\end{lemma}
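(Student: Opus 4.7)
The plan leans on three ingredients for the bi-scalar $r^{2}(x,y)$ on $M\times M$ near the diagonal:
(i) the symmetry $r^{2}(x,y)=r^{2}(y,x)$, which at the diagonal interchanges $x$- and $y$-indices;
(ii) the diagonal vanishing $r^{2}(x,x)\equiv 0$, so that every iterated derivative of $r^{2}$ along the diagonal $\{(p,p)\}\subset M\times M$ vanishes at $x=y$, producing by the chain rule a linear constraint among the mixed partial derivatives;
(iii) the explicit formula $r^{2}(p,y)=|y|^{2}$ in Riemann normal coordinates centered at $p$, which is quadratic in $y$ and pins down every pure-$y$ derivative of $r^{2}$ at $(p,p)$ up to third order without further input.

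The identities $(\ref{1st-r})$--$(\ref{3rd-r})$ follow by bookkeeping with (i)--(iii). Applying one diagonal derivative to $r^{2}(x,x)=0$ gives $\nabla_{i}r^{2}|_{x=y}+\nabla_{\bar{i}}r^{2}|_{x=y}=0$; (i) makes the summands equal, so both must vanish, proving $(\ref{1st-r})$. Two diagonal derivatives give $\nabla_{ij}r^{2}+2\nabla_{i\bar{j}}r^{2}+\nabla_{\bar{i}\bar{j}}r^{2}=0$ at $x=y$; (iii) gives $\nabla_{\bar{i}\bar{j}}r^{2}|_{x=y=p}=2\delta_{ij}$, (i) transfers this to $\nabla_{ij}$, and subtraction yields $\nabla_{i\bar{j}}r^{2}|_{x=y}=-2\delta_{ij}$, which is $(\ref{2nd-r})$. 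For $(\ref{3rd-r})$, (iii) gives $\nabla_{\bar{i}\bar{j}\bar{k}}r^{2}|_{x=y}=0$ since $|y|^{2}$ is quadratic; (i) transfers this to $\nabla_{ijk}$; the mixed third-order terms then vanish after one further diagonal derivative of the constant second-order identities, using the already-established pure third-order vanishing.

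The genuine obstacle is the fourth-order identity in $(\ref{4th-r})$, where the Riemann curvature tensor enters essentially. The standard route is the classical Synge--DeWitt coincidence-limit calculus for the world function $\sigma:=\tfrac{1}{2}r^{2}$: starting from the Hamilton--Jacobi identity $g^{ab}\sigma_{a}\sigma_{b}=2\sigma$, one differentiates four times, evaluates on the diagonal, and uses the already-established lower-order coincidence limits $[\sigma_{a}]=0$, $[\sigma_{ab}]=g_{ab}$, $[\sigma_{abc}]=0$ together with the Ricci commutator identity applied to $\sigma_{c}$, to solve algebraically for $[\sigma_{abcd}]=-\tfrac{1}{3}(R_{acbd}+R_{adbc})$; doubling and renaming indices gives the formula for $r^{2}$. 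Equivalently, one substitutes the normal-coordinate metric expansion $g_{ij}(z)=\delta_{ij}-\tfrac{1}{3}R_{ikjl}(p)z^{k}z^{l}+O(|z|^{3})$ into the geodesic equation and expands the squared length of the geodesic joining $x$ to $y$ to fourth order in the endpoint coordinates. The mirror relation $\nabla_{ij\bar{k}\bar{l}}r^{2}|_{x=y}=-\nabla_{ijkl}r^{2}|_{x=y}$ then drops out from Synge's rule $[A_{;a'}]=[A]_{;a}-[A_{;a}]$ applied twice, the vanishing of the mixed third-order coincidence limits killing all intermediate pieces.

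The main obstacle is clearly the fourth-order curvature computation itself; the rest of the lemma is diagonal bookkeeping driven by (i)--(iii).
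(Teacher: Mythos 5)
The paper does not actually prove this lemma; it simply cites $\cite{BBG}$ for $(\ref{1st-r})$--$(\ref{2nd-r})$ and DeWitt's Chapter~16 for $(\ref{3rd-r})$--$(\ref{4th-r})$. Your proposal supplies a genuine argument, and the route you choose --- diagonal chain-rule constraints from $r^{2}(x,x)\equiv 0$ plus the $x\leftrightarrow y$ symmetry for the low orders, and the Synge--DeWitt coincidence-limit calculus for the world function $\sigma=\tfrac12 r^2$ (Hamilton--Jacobi identity $g^{ab}\sigma_a\sigma_b=2\sigma$, differentiate four times, Ricci commutators) for the curvature term --- is exactly the machinery in the cited reference. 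The bookkeeping for orders one through three is correct: the symmetry makes $\nabla_i r^2|_{x=y}=\nabla_{\bar\imath}r^2|_{x=y}$, so the diagonal constraint forces both to vanish; the pure second- and third-order $y$-derivatives come for free from $r^2(p,y)=|y|^2$ in normal coordinates at $p$, and the mixed ones follow by differentiating the lower-order diagonal identities once more.

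There is, however, a sign problem in your treatment of the fourth-order ``mirror'' relation, and it is worth naming precisely because your own stated tool contradicts your stated conclusion. You write that $\nabla_{ij\bar k\bar l}\,r^{2}|_{x=y}=-\nabla_{ijkl}\,r^{2}|_{x=y}$ ``drops out from Synge's rule applied twice, the vanishing of the mixed third-order coincidence limits killing all intermediate pieces.'' But each application of Synge's rule $\nabla_a[B]=[B_{;a}]+[B_{;a'}]$ to a third-order coincidence limit that vanishes contributes exactly one factor of $-1$, so passing from $[\sigma_{ijkl}]$ to $[\sigma_{ijk'l'}]$ requires two applications and produces $(-1)^2=+1$. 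Concretely: $[\sigma_{ijk'l'}]=-[\sigma_{ijk'l}]=-[\sigma_{ijlk'}]=-(-[\sigma_{ijlk}])=[\sigma_{ijlk}]=[\sigma_{ijkl}]$, the last step by symmetry in the final pair. So the correct mirror relation is $\nabla_{ij\bar k\bar l}\,r^{2}|_{x=y}=+\nabla_{ijkl}\,r^{2}|_{x=y}$, \emph{not} with the minus sign. You have reproduced the sign that appears in the paper's equation $(\ref{4th-r})$, but that sign is itself inconsistent with the standard DeWitt/Synge formulas (cf.\ $[\sigma_{;\alpha\beta\gamma'\delta'}]=[\sigma_{;\alpha\beta\gamma\delta}]=-\tfrac13(R_{\alpha\gamma\beta\delta}+R_{\alpha\delta\beta\gamma})$) and with the paper's own pattern at second order, where a single bar flips the sign and hence two bars restore it. You should not have inherited the sign uncritically: the very computation you invoke detects it.

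Two small additional points. First, in the second-order step, the term ``$2\nabla_{i\bar j}r^2$'' silently uses $\nabla_{i\bar j}r^2|_{x=y}=\nabla_{\bar\imath j}r^2|_{x=y}$, which does hold but needs the $x\leftrightarrow y$ symmetry to justify; it is worth saying so. Second, be aware that the lemma's differential operators should be read as covariant (the cited world-function calculus is covariant); if you take the literal partial derivatives in a single normal chart, third- and fourth-order expressions pick up contributions from derivatives of Christoffel symbols, and the formulas change.
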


The identities $\left( \ref{1st-r}\right) $ and $\left( \ref{2nd-r}\right) $
are well-known (e.g. \cite{BBG}). The identities $\left( \ref{3rd-r}\right) $
and $\left( \ref{4th-r}\right) $ can be found in Chapter 16 (p.282) of \cite%
{De}, where $\frac{1}{2}r^{2}\left( x,y\right) $ is called the \emph{world
function}, and the limit at $x=y$ is called the coincidence limit.

\subsection{\protect\bigskip Admissible graphs}

To compute $\left\langle D^{\overrightarrow{\alpha }}\Phi _{t},D^{%
\overrightarrow{\beta }}\Phi _{t}\right\rangle $, it is useful to introduce
certain graphs.

\begin{definition}
(Vertex set) Let $\mathcal{S=}\left\{ \overrightarrow{\alpha }\right\}
\amalg \left\{ \overrightarrow{\beta }\right\} $ be the set with $\left\vert 
\overrightarrow{\alpha }\right\vert +\left\vert \overrightarrow{\beta }%
\right\vert $ vertices, where each vertex is an index $i\in \mathcal{S}$,
with the sign \textquotedblleft $+$\textquotedblright\ if it is in $\left\{ 
\overrightarrow{\alpha }\right\} $ and sign \textquotedblleft $-$%
\textquotedblright\ if it is in $\left\{ \overrightarrow{\beta }\right\} $.
We say the vertex has color $i$ if we do not distinguish its sign.
\end{definition}

\begin{definition}
(Admissible graph)\label{Graph-thry} A graph $G$ on the vertex set $\mathcal{%
S=}\left\{ \overrightarrow{\alpha }\right\} \amalg \left\{ \overrightarrow{%
\beta }\right\} $ is called \emph{admissible} if (i) Each vertex has valent
one; (ii) Each edge connects two vertices with the same color; (iii) Each
edge has a sign $\pm $ being the negative of the product of the signs of its
two end-points. Let Sign$\left( G\right) $ be the product of the signs of
all edges of $G$.
\end{definition}

\begin{remark}
The necessary and sufficient condition for $\mathcal{S=}\left\{ 
\overrightarrow{\alpha }\right\} \amalg \left\{ \overrightarrow{\beta }%
\right\} $ to have admissible graph(s) is that each index appears in $%
\mathcal{S}$ with even multiplicity. Especially $\left\vert \overrightarrow{%
\alpha }\right\vert +\left\vert \overrightarrow{\beta }\right\vert $ must be
even. If $G$ is not admissible, we simply let Sign$\left( G\right) =0$.
\end{remark}

\begin{proposition}
\label{Graph-computation}Suppose there are $s$ distinct indices $\left\{
j_{1},j_{2},\cdots ,j_{s}\right\} \subset \left\{ 1,2,\cdots ,n\right\} $ in 
$\left\{ \overrightarrow{\alpha }\right\} \amalg \left\{ \overrightarrow{%
\beta }\right\} $, such that each $j_{r}$ has multiplicity $a_{r}$ in $%
\overrightarrow{\alpha }$, and multiplicity $b_{r}$ in $\overrightarrow{%
\beta }$. Let $\sigma _{r}=\frac{a_{r}+b_{r}}{2}$. Then we have

\begin{enumerate}
\item For each admissible graph $G$ of $\mathcal{S=}\left\{ \overrightarrow{%
\alpha }\right\} \amalg \left\{ \overrightarrow{\beta }\right\} $, sign$%
\left( G\right) =\left( -1\right) ^{\frac{\left\vert \overrightarrow{\alpha }%
\right\vert -\left\vert \overrightarrow{\beta }\right\vert }{2}}$;

\item $\#\left\{ \text{admissible graphs of }\mathcal{S}\right\} ={\LARGE %
\Pi }_{r=1}^{s}\left[ \frac{\left( 2\sigma _{r}\right) !}{2^{\sigma
_{r}}\left( \sigma _{r}\right) !}\right] =\left\vert A\left( \overrightarrow{%
\alpha },\overrightarrow{\beta }\right) \right\vert $.
\end{enumerate}
\end{proposition}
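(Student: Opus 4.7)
The plan is to treat the two assertions separately; both reduce to elementary combinatorics once one unpacks Definition \ref{Graph-thry}.

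For part 1, I would track the contribution of each edge to $\text{Sign}(G)$. By condition (iii), a same-sign edge ($++$ or $--$) contributes the factor $-1$, while a mixed $+-$ edge contributes $+1$, so $\text{Sign}(G) = (-1)^{k}$ where $k$ is the number of same-sign edges. To pin down $k \bmod 2$, I would argue color by color: within color $j_{r}$, the $a_{r}$ plus-vertices and $b_{r}$ minus-vertices are matched among themselves, and if $m_{r}$ denotes the number of mixed edges of color $j_{r}$, then $a_{r} - m_{r}$ is twice the number of $++$ edges in that color and hence is even. Thus $m_{r} \equiv a_{r} \pmod{2}$, and summing over $r$ gives $m := \sum_{r} m_{r} \equiv |\overrightarrow{\alpha}| \pmod{2}$; by the symmetric argument $m \equiv |\overrightarrow{\beta}| \pmod{2}$.

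Since the total number of edges equals $(|\overrightarrow{\alpha}| + |\overrightarrow{\beta}|)/2$, we have $k = (|\overrightarrow{\alpha}| + |\overrightarrow{\beta}|)/2 - m$, and substituting $m \equiv |\overrightarrow{\beta}| \pmod{2}$ yields $k \equiv (|\overrightarrow{\alpha}| - |\overrightarrow{\beta}|)/2 \pmod{2}$, which gives the claimed $\text{Sign}(G) = (-1)^{(|\overrightarrow{\alpha}| - |\overrightarrow{\beta}|)/2}$.

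For part 2, the point is that condition (ii) forces the matching to factor across color classes: choosing an admissible graph is precisely choosing, for each color $j_{r}$, a perfect matching on the $2\sigma_{r}$ labeled vertices of that color (the signs of the endpoints then determine the edge sign automatically, so they impose no additional constraint on the count). The number of perfect matchings on $2\sigma_{r}$ labeled vertices is the standard double factorial $(2\sigma_{r}-1)!! = \frac{(2\sigma_{r})!}{2^{\sigma_{r}} \sigma_{r}!}$. Taking the product over $r = 1, \ldots, s$ produces the stated enumeration, which by $(\ref{constant_A})$ equals $|A(\overrightarrow{\alpha}, \overrightarrow{\beta})|$. Neither step is deep; the only mild obstacle is the parity bookkeeping in part 1, and the key conceptual point is simply that same-color matching makes the count multiplicative over color classes.
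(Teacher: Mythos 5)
Your proof is correct. Part 2 is essentially the paper's argument: admissibility forces the matching to decompose over color classes, and each class contributes the standard double factorial $(2\sigma_r - 1)!! = \frac{(2\sigma_r)!}{2^{\sigma_r}\sigma_r!}$; the paper derives this count via the product of binomials $\binom{2\sigma_r}{2}\binom{2\sigma_r - 2}{2}\cdots / \sigma_r!$, but the content is identical.

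For Part 1, however, you take a genuinely different route, and it is worth noting that the paper has a slicker one. You compute $\text{Sign}(G) = (-1)^{k}$ with $k$ the number of same-sign edges, and then pin down $k \bmod 2$ by a color-by-color parity count of mixed edges $m_r$, using $m_r \equiv a_r \equiv b_r \pmod 2$. That works, but it requires the auxiliary bookkeeping of splitting edges by type and summing over colors. The paper instead exploits the valence-one condition directly: since every vertex lies in exactly one edge, and each edge sign is $(-1)\cdot\text{sign}(i)\text{sign}(i')$, the product over edges factors as
\[
\text{Sign}(G) = \prod_{e} \text{sign}(e) = (-1)^{\#\text{edges}} \prod_{v} \text{sign}(v) = (-1)^{\frac{|\overrightarrow{\alpha}| + |\overrightarrow{\beta}|}{2}} \cdot (-1)^{|\overrightarrow{\beta}|} = (-1)^{\frac{|\overrightarrow{\alpha}| - |\overrightarrow{\beta}|}{2}},
\]
with no case analysis at all. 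Your argument buys you nothing extra here (beyond incidentally re-deriving that $a_r \equiv b_r \pmod 2$, i.e.\ that admissibility is possible), so the direct factorization is the cleaner proof; still, your parity argument is sound and arrives at the same conclusion.
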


\begin{proof}
By the definition of sign$\left( G\right) $, it is the product of the signs
of its edges. There are $\frac{\left\vert \overrightarrow{\alpha }%
\right\vert +\left\vert \overrightarrow{\beta }\right\vert }{2}$ edges, each
edge connecting vertices $i$ and $i^{\prime }$ has the sign $\left(
-1\right) \cdot $sign$\left( i\right) $sign$\left( i^{\prime }\right) $, and
the disjoint union of edges covers all vertices of $G$, so 
\begin{eqnarray*}
\text{sign}\left( G\right)  &=&\underset{\text{edge }e\text{ of }G}{\Pi }%
\text{sign}(e)=\left( -1\right) ^{\frac{\left\vert \overrightarrow{\alpha }%
\right\vert +\left\vert \overrightarrow{\beta }\right\vert }{2}}\cdot 
\underset{\text{vertex }i\text{ of }G}{\Pi }\text{sign}(i) \\
&=&\left( -1\right) ^{\frac{\left\vert \overrightarrow{\alpha }\right\vert
+\left\vert \overrightarrow{\beta }\right\vert }{2}}\cdot \left( -1\right)
^{\left\vert \overrightarrow{\beta }\right\vert }\text{ }=\left( -1\right) ^{%
\frac{\left\vert \overrightarrow{\alpha }\right\vert -\left\vert 
\overrightarrow{\beta }\right\vert }{2}}\text{,}
\end{eqnarray*}%
where the second line is because only vertices in $\overrightarrow{\beta }$
have \textquotedblleft $-$\textquotedblright\ sign. For each index $j_{r}$,
it has total multiplicity $2\sigma _{r}$ in $\mathcal{S=}\left\{ 
\overrightarrow{\alpha }\right\} \amalg \left\{ \overrightarrow{\beta }%
\right\} $, i.e. there are $2\sigma _{r}$ vertices with color $j_{r}$ in $%
\mathcal{S}\,$, so the ways to draw disjoint edges on this subset of
vertices in $\mathcal{S}$ is 
\begin{eqnarray*}
&&\left. \left( 
\begin{array}{c}
2\sigma _{r} \\ 
2\sigma _{r}-2%
\end{array}%
\right) \left( 
\begin{array}{c}
2\sigma _{r}-2 \\ 
2\sigma _{r}-4%
\end{array}%
\right) \cdots \left( 
\begin{array}{c}
2 \\ 
2%
\end{array}%
\right) \right/ \sigma _{r}! \\
&=&\left. \frac{\left( 2\sigma _{r}\right) !}{\left( 2\sigma _{r}-2\right)
!2!}\frac{\left( 2\sigma _{r}-2\right) !}{\left( 2\sigma _{r}-4\right) !2!}%
\cdots \frac{2!}{2!}\right/ \sigma _{r}! \\
&=&\frac{\left( 2\sigma _{r}\right) !}{2^{\sigma _{r}}\left( \sigma
_{r}\right) !}
\end{eqnarray*}%
Considering all distinct indices $j_{1},j_{2},\cdots $ and $j_{s}$ in $%
\mathcal{S}$, we see the number of admissible graphs on $\mathcal{S}$ is $%
{\LARGE \Pi }_{r=1}^{s}\left[ \frac{\left( 2\sigma _{r}\right) !}{2^{\sigma
_{r}}\left( \sigma _{r}\right) !}\right] $.
\end{proof}

\begin{remark}
The number $A\left( \overrightarrow{\alpha },\overrightarrow{\beta }\right) $
is related to the moments of the Gaussian $e^{-\left\vert x\right\vert ^{2}}$
by 
\begin{equation*}
A\left( \overrightarrow{\alpha },\overrightarrow{\beta }\right) =2^{\left[ 
\frac{\left\vert \overrightarrow{\alpha }\right\vert +\left\vert 
\overrightarrow{\beta }\right\vert }{2}\right] }\pi ^{-n/2}\int_{\mathbb{R}%
^{n}}e^{-\left\vert x\right\vert ^{2}}x^{\overrightarrow{\alpha }+%
\overrightarrow{\beta }}dx.
\end{equation*}%
Actually, the above combinatorics of the numbers $A\left( \overrightarrow{%
\alpha },\overrightarrow{\beta }\right) $ is a manifestation of a classical 
\emph{Wick formula} (or \emph{Isserlis' theorem}) of the Gauss integrals,
see e.g. \cite{AT}, Section 11.6. The formula is frequently used in the
Feynman integral. 
\end{remark}

\subsection{High-jet relations from the heat kernel expansion}

Given any $x$ on $M$, let $\left\{ V_{i}\right\} _{i=1}^{n}$ be an
orthonormal basis at $x$, $\nabla _{i}=\nabla _{V_{i}}^{x}$ be the partial
derivative with respect to $V_{i}$ in the $x$ variables, and $\nabla _{\bar{j%
}}=\nabla _{V_{j}}^{y}$ be the partial derivative with respect to $V_{j}$ in
the $y$ variables, where \textquotedblleft $-$\textquotedblright\ indicates
the derivative is with respect to the $y$-variables. We also denote $\nabla
_{ij}=\nabla _{V_{i}}^{x}\nabla _{V_{j}}^{x}$, $\nabla _{i\overline{j}%
}=\nabla _{V_{i}}^{x}\nabla _{V_{j}}^{y}$ and so on. It is easy to check
that for derivative operators $D^{\overrightarrow{\alpha }}$ and $D^{%
\overrightarrow{\beta }}$, 
\begin{equation}
\left\langle D^{\overrightarrow{\alpha }}\Phi _{t},D^{\overrightarrow{\beta }%
}\Phi _{t}\right\rangle \left( x\right) =D_{y}^{\overrightarrow{\beta }%
}D_{x}^{\overrightarrow{\alpha }}H\left( t,x,y\right) |_{x=y}.
\label{deri-angle-off-diagonal-formula}
\end{equation}

\begin{proposition}
\label{high-derivative} Let $\overrightarrow{\alpha }$ and $\overrightarrow{%
\beta }$ be two multi-indices in the set $\left\{ 1,2,\cdots n\right\} $.
Then as $t\rightarrow 0_{+}$, we have 
\begin{equation}
\left( 4\pi t\right) ^{n/2}\left( 2t\right) ^{\left[ \frac{\left\vert 
\overrightarrow{\alpha }\right\vert +\left\vert \overrightarrow{\beta }%
\right\vert }{2}\right] }\cdot \left. D_{y}^{\overrightarrow{\beta }}D_{x}^{%
\overrightarrow{\alpha }}H\left( t,x,y\right) \right\vert _{x=y}\rightarrow
A\left( \overrightarrow{\alpha },\overrightarrow{\beta }\right) ,
\label{DyDxH_asymp}
\end{equation}%
where $\left[ c\right] $ is the largest integer less or equal to a given
real number $c$. If $A\left( \overrightarrow{\alpha },\overrightarrow{\beta }%
\right) =0$, we further have 
\begin{equation}
\left( 4\pi t\right) ^{n/2}\left( 2t\right) ^{\left[ \frac{\left\vert 
\overrightarrow{\alpha }\right\vert +\left\vert \overrightarrow{\beta }%
\right\vert }{2}\right] }\cdot \left. D_{y}^{\overrightarrow{\beta }}D_{x}^{%
\overrightarrow{\alpha }}H\left( t,x,y\right) \right\vert _{x=y}=O\left(
t\right) .  \label{DyDxH_vanishing}
\end{equation}
\end{proposition}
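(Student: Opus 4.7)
The plan is to derive Proposition \ref{high-derivative} directly from the Minakshisundaram--Pleijel factorization $H(t,x,y) = (4\pi t)^{-n/2}\, e^{-r^{2}/(4t)}\, U(t,x,y)$ by reducing the mixed derivative $\left.D_{y}^{\vec\beta} D_{x}^{\vec\alpha}\right|_{x=y}$ to the graph enumeration of Proposition \ref{Graph-computation}. First I would apply the Leibniz rule to split the derivatives between the exponential factor and $U$. Because $U(t,x,x)=1+O(t)$ by (\ref{u0xx}) and (\ref{U-expansion}), and the mixed derivatives of $U$ on the diagonal are tame with the first-order ones already being $O(t)$ by (\ref{U_deri}), the Leibniz terms in which any derivative falls on $U$ will turn out to be a full factor of $t$ below the principal term; I would collect these into the final error.

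For the main piece, in which all derivatives act on $e^{-r^{2}/(4t)}$, I would apply the multivariable Fa\`a di Bruno formula. Treating the slots of $\vec\alpha$ as carrying sign ``$+$'' and the slots of $\vec\beta$ as carrying sign ``$-$'', this produces
\begin{equation*}
\left.D_{y}^{\vec\beta} D_{x}^{\vec\alpha} e^{-r^{2}/(4t)}\right|_{x=y} \;=\; \sum_{\pi}\; \prod_{B\in\pi} \Bigl(-\tfrac{1}{4t}\Bigr)\, \left.D^{B}(r^{2})\right|_{x=y},
\end{equation*}
where $\pi$ ranges over set partitions of the labelled vertex set $\vec\alpha\amalg\vec\beta$. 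Lemma \ref{elimination-tools} now does the work: size-$1$ and size-$3$ blocks vanish, a pair block contributes $\pm 2\delta_{ij}$ (the sign being $+$ for an $x$-$y$ pair and $-$ for an $x$-$x$ or $y$-$y$ pair, which matches the edge sign $-\mathrm{sign}(i)\mathrm{sign}(j)$ of Definition \ref{Graph-thry}), and size-$\ge 4$ blocks are bounded smooth functions of $x$. Since every surviving block contributes exactly one factor of $1/t$, the maximal power $1/t^{(|\vec\alpha|+|\vec\beta|)/2}$ is attained precisely by pair partitions whose pairs are monochromatic, i.e.\ by admissible graphs. Each such matching contributes $\bigl(\tfrac{1}{2t}\bigr)^{k}\mathrm{Sign}(G)$ with $k=(|\vec\alpha|+|\vec\beta|)/2$, and Proposition \ref{Graph-computation} sums these signed contributions to $\bigl(\tfrac{1}{2t}\bigr)^{k} A(\vec\alpha,\vec\beta)$. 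Multiplying by $U(t,x,x)=1+O(t)$ and the prefactor $(4\pi t)^{-n/2}$, then rescaling by $(4\pi t)^{n/2}(2t)^{k}$, produces $A(\vec\alpha,\vec\beta)+O(t)$.

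The hard part will be extracting a clean $O(t)$ bound on the remainder uniformly in $x$. I would itemise it into three pieces and show each is $O(t)$ relative to the prescribed scaling: (a) Fa\`a di Bruno partitions containing a block of size $\ge 4$ have strictly fewer than $k$ blocks, costing at least one factor of $t$; (b) Leibniz terms which put derivatives on $U$ can be reorganised by the same partition argument, picking up a factor of $t$ either from one fewer block on the exponential side or from $U(t,x,x)=1+O(t)$ and (\ref{U_deri}); (c) in the degenerate case $A(\vec\alpha,\vec\beta)=0$, the leading pair-partition sum already vanishes---either because some color has odd total multiplicity, forcing every perfect matching to contain a cross-color pair killed by the $\delta_{ij}$, or because $|\vec\alpha|+|\vec\beta|$ is odd and no pair partition exists at all. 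In that case the next surviving partition loses one block, which under the scaling $(2t)^{\lfloor(|\vec\alpha|+|\vec\beta|)/2\rfloor}$ gives exactly the $O(t)$ claimed in (\ref{DyDxH_vanishing}). Uniformity in $x$ is automatic, as all the quantities entering the estimates ($D^{B}r^{2}|_{x=y}$, derivatives of $U$ on the diagonal, curvature terms) are continuous functions on the compact manifold $M$.
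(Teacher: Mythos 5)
Your proposal is correct and follows essentially the same route as the paper: factor via Minakshisundaram--Pleijel, expand the derivatives by Leibniz plus the chain rule, use Lemma~\ref{elimination-tools} to kill size-$1$ and size-$3$ blocks, identify the surviving pair partitions (monochromatic pairs) with the admissible graphs of Definition~\ref{Graph-thry}, and then invoke Proposition~\ref{Graph-computation} for the signed count $A(\overrightarrow{\alpha},\overrightarrow{\beta})$. Your explicit appeal to the multivariate Fa\`a di Bruno formula is simply a cleaner name for what the paper encodes in the polynomial $P_{\overrightarrow{\gamma}}(t,x,y)$ whose summands are $\bigl(\Pi_j D^{\overrightarrow{\mu_j}}(-r^{2}/4t)\bigr)D^{\overrightarrow{\eta}}U$ subject to the degree constraint $\Sigma_j|\overrightarrow{\mu_j}|+|\overrightarrow{\eta}|=|\overrightarrow{\gamma}|$, so it is a notational improvement rather than a different argument; your remainder bookkeeping (a)--(c), including the use of $U(t,x,x)=1+O(t)$ and $\nabla_i U|_{x=y}=O(t)$, matches the paper's Claims~1--2 and its handling of the degenerate case.
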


\begin{proof}
For any multi-index $\vec{\gamma}$, applying the Leibniz rule to 
\begin{equation*}
H\left( t,x,y\right) =\frac{1}{\left( 4\pi t\right) ^{n/2}}e^{-\frac{r^{2}}{%
4t}}U\left( t,x,y\right)
\end{equation*}%
we can write 
\begin{equation*}
D^{\overrightarrow{\gamma }}H\left( t,x,y\right) =\frac{1}{\left( 4\pi
t\right) ^{n/2}}e^{-\frac{r^{2}}{4t}}P_{\overrightarrow{\gamma }}\left(
t,x,y\right) ,
\end{equation*}%
where $P_{\overrightarrow{\gamma }}\left( t,x,y\right) $ is a polynomial in $%
D^{\overrightarrow{\mu _{j}}}\left( -\frac{r^{2}\left( x,y\right) }{4t}%
\right) $ and $\left. D^{\overrightarrow{\eta }}U\left( t,x,y\right)
\right\vert _{x=y}$, i.e. each summand is of the form%
\begin{equation}
\left. \left( \Pi _{j=1}^{l}D^{\overrightarrow{\mu _{j}}}\left( -\frac{%
r^{2}\left( x,y\right) }{4t}\right) \right) \right\vert _{x=y}\cdot \left.
D^{\overrightarrow{\eta }}U\left( t,x,y\right) \right\vert _{x=y}
\label{summand}
\end{equation}%
for some multi-indices $\overrightarrow{\mu _{j}}$ and $\overrightarrow{\eta 
}$ with 
\begin{equation}
\Sigma _{j=1}^{l}\left\vert \overrightarrow{\mu _{j}}\right\vert +\left\vert 
\overrightarrow{\eta }\right\vert =\left\vert \overrightarrow{\gamma }%
\right\vert .  \label{total-degree}
\end{equation}%
For example, when $\overrightarrow{\gamma }=\partial _{x_{i}},$%
\begin{equation*}
P_{\overrightarrow{i}}\left( t,x,y\right) =\partial _{i}\left( -\frac{%
r^{2}\left( x,y\right) }{4t}\right) U\left( t,x,y\right) +\partial
_{i}U\left( t,x,y\right) .
\end{equation*}%
(more examples of $P_{\overrightarrow{\gamma }}\left( t,x,y\right) $ with $%
\left\vert \overrightarrow{\gamma }\right\vert \leq 4$ are in \cite{WZ}).
Now we take 
\begin{equation*}
\vec{\gamma}=\left( \overrightarrow{\alpha },\overrightarrow{\overline{\beta 
}}\right) ,\left\vert \overrightarrow{\gamma }\right\vert =\left\vert 
\overrightarrow{\alpha }\right\vert +\left\vert \overrightarrow{\beta }%
\right\vert ,D^{\vec{\gamma}}=D_{y}^{\overrightarrow{\beta }}D_{x}^{%
\overrightarrow{\alpha }},
\end{equation*}%
where $\overrightarrow{\overline{\beta }}$ indicates the derivative $D_{y}^{%
\overrightarrow{\beta }}$ is with respect to the $y$-variables. We have the
following claims:

\begin{enumerate}
\item As $t\rightarrow 0_{+}$, the nonzero summands of $P_{\vec{\gamma}%
}\left( t,x,y\right) $ involving the highest power of $\frac{1}{t}$ must
have $\overrightarrow{\mu _{j}}^{\prime }s$ with $\left\vert \overrightarrow{%
\mu _{j}}\right\vert =2$ as many as possible, such that each $%
\overrightarrow{\mu _{j}}$ is of the form $\left( i,i\right) $, $\left( i,%
\overline{i}\right) $ or $\left( \overline{i},\overline{i}\right) $ for some 
$i\in \left\{ 1,2,\cdots ,n\right\} $, and $\left\vert \overrightarrow{\eta }%
\right\vert =0$ or $1$. We also have 
\begin{equation*}
\left\vert D^{\overrightarrow{\gamma }}H\left( t,x,y\right)
|_{x=y}\right\vert \leq C\frac{1}{\left( 4\pi t\right) ^{n/2}}\cdot t^{-%
\left[ \frac{\left\vert \overrightarrow{\gamma }\right\vert }{2}\right] },
\end{equation*}%
where the constant $C$ depends on $n$ and $\left\vert \overrightarrow{\gamma 
}\right\vert $.

\item If the total multiplicity of each index in $\left\{ \overrightarrow{%
\alpha }\right\} \amalg \left\{ \overrightarrow{\beta }\right\} $ is even,
then we further have $l=\frac{\left\vert \overrightarrow{\gamma }\right\vert 
}{2}$, and $\left\vert \overrightarrow{\eta }\right\vert =0$ for the the
nonzero summands of $P_{\vec{\gamma}}\left( t,x,y\right) $ involving the
highest power of $\frac{1}{t}$. These terms are of the same sign $\left(
-1\right) ^{\frac{\left\vert \overrightarrow{\alpha }\right\vert -\left\vert 
\overrightarrow{\beta }\right\vert }{2}}$, and the total number of such
terms is $\left\vert A\left( \overrightarrow{\alpha },\overrightarrow{\beta }%
\right) \right\vert $.
\end{enumerate}

For Claim 1, from Lemma \ref{elimination-tools} we have 
\begin{eqnarray}
\partial _{i}\left( -\frac{r^{2}\left( x,y\right) }{4t}\right) |_{x=y} &=&0,
\notag \\
\text{ }\partial _{ij}\left( -\frac{r^{2}\left( x,y\right) }{4t}\right)
|_{x=y} &=&-\frac{\delta _{ij}}{2t}=-\partial _{i\overline{j}}\left( -\frac{%
r^{2}\left( x,y\right) }{4t}\right) |_{x=y},  \notag \\
\partial _{ijk}\left( -\frac{r^{2}\left( x,y\right) }{4t}\right) |_{x=y}
&=&0=\partial _{ij\overline{k}}\left( -\frac{r^{2}\left( x,y\right) }{4t}%
\right) |_{x=y},  \notag \\
\left\vert D^{\overrightarrow{\mu _{j}}}\left( -\frac{r^{2}\left( x,y\right) 
}{4t}\right) |_{x=y}\right\vert &\leq &C\cdot \frac{1}{t}\text{, for any }%
\vec{\mu}_{j}.  \label{t-power}
\end{eqnarray}%
So from the equations in $\left( \ref{t-power}\right) $ and the total degree
condition $\left( \ref{total-degree}\right) $, the summand 
\begin{equation*}
\left. \left( \Pi _{j=1}^{l}D^{\overrightarrow{\mu _{j}}}\left( -\frac{%
r^{2}\left( x,y\right) }{4t}\right) \right) \right\vert _{x=y}\cdot D^{%
\overrightarrow{\eta }}U
\end{equation*}%
can not exceed $\left[ \frac{\left\vert \overrightarrow{\gamma }\right\vert 
}{2}\right] $ copies of factor $\frac{1}{t}$. The $\left[ \frac{\left\vert 
\overrightarrow{\gamma }\right\vert }{2}\right] $ copies of factor $\frac{1}{%
t}$ can be achieved if $\left\vert \overrightarrow{\mu _{j}}\right\vert =2$
for all $j$, and $\left\vert \overrightarrow{\eta }\right\vert =0$ or $1$
depending on $\left\vert \overrightarrow{\gamma }\right\vert $ is even or
odd, so 
\begin{equation*}
\left\vert D^{\overrightarrow{\eta }}U\right\vert \leq \left\vert
u_{0}\left( x,x\right) \right\vert +\left\vert \nabla _{i}u_{0}\left(
x,y\right) |_{x=y}\right\vert \leq 2
\end{equation*}%
by $\left( \ref{u0xx}\right) $ and $\left( \ref{u0_deri}\right) $ as $%
t\rightarrow 0_{+}$. Such summand is nonzero if and only if each $%
\overrightarrow{\mu _{j}}$ is of the form $\left( i,i\right) $, $\left( i,%
\overline{i}\right) $ or $\left( \overline{i},\overline{i}\right) $ for some 
$i\in \left\{ 1,2,\cdots ,n\right\} $ by Lemma \ref{elimination-tools}, so
there are at most $3n$ choices of each $\overrightarrow{\mu _{j}}$ .
Therefore as $t\rightarrow 0_{+}$, 
\begin{eqnarray*}
\left\vert D^{\overrightarrow{\gamma }}H\left( t,x,y\right)
|_{x=y}\right\vert &=&\left\vert \left. \frac{1}{\left( 4\pi t\right) ^{n/2}}%
e^{-\frac{r^{2}}{4t}}P_{\overrightarrow{\gamma }}\left( t,x,y\right)
\right\vert _{x=y}\right\vert \\
&\leq &\frac{1}{\left( 4\pi t\right) ^{n/2}}\cdot \left( \frac{1}{2t}\right)
^{\left[ \frac{\left\vert \overrightarrow{\gamma }\right\vert }{2}\right]
}\cdot \left( 3n\right) ^{\left[ \frac{\left\vert \overrightarrow{\gamma }%
\right\vert }{2}\right] }\cdot 2.
\end{eqnarray*}

For Claim 2, since each index appears with even multiplicity in $\left\{ 
\overrightarrow{\alpha }\right\} \amalg \left\{ \overrightarrow{\beta }%
\right\} $, it is easy to see the summands $\left( \ref{summand}\right) $
with $l=\frac{\left\vert \overrightarrow{\gamma }\right\vert }{2}$ and $%
\left\vert \overrightarrow{\eta }\right\vert =0$ can have maximal number of $%
\overrightarrow{\mu _{j}}^{\prime }$s satisfying $\left\vert \overrightarrow{%
\mu _{j}}\right\vert =2$, with each $\overrightarrow{\mu _{j}}$ is of the
form $\left( i,i\right) $, $\left( i,\overline{i}\right) $ or $\left( 
\overline{i},\overline{i}\right) $ for some $i\in \left\{ 1,2,\cdots
,n\right\} $. It is of the form%
\begin{equation*}
\left. \left( \Pi _{j=1}^{l}D^{\overrightarrow{\mu _{j}}}\left( -\frac{%
r^{2}\left( x,y\right) }{4t}\right) \right) \right\vert _{x=y}\cdot U\left(
t,x,x\right) .
\end{equation*}%
Since 
\begin{equation}
\text{ }\partial _{ij}\left( -\frac{r^{2}\left( x,y\right) }{4t}\right)
|_{x=y}=-\frac{\delta _{ij}}{2t}=-\partial _{i\overline{j}}\left( -\frac{%
r^{2}\left( x,y\right) }{4t}\right) |_{x=y},  \label{basic-term}
\end{equation}%
the product 
\begin{equation*}
\left. \left( \Pi _{j=1}^{l}D^{\overrightarrow{\mu _{j}}}\left( -\frac{%
r^{2}\left( x,y\right) }{4t}\right) \right) \right\vert _{x=y}=\Pi _{j=1}^{l}%
\text{sign}\left( \vec{\mu}_{j}\right) \cdot \left( \frac{1}{2t}\right) ^{l}
\end{equation*}%
gives the highest power of $\frac{1}{t}$, where we let sign$\left( \vec{\mu}%
_{j}\right) =$ $1$ if $\overrightarrow{\mu _{j}}=\left( i,i\right) $ or $%
\left( \overline{i},\overline{i}\right) $, and sign$\left( \vec{\mu}%
_{j}\right) =$ $-1$ if $\overrightarrow{\mu _{j}}=\left( i,\overline{i}%
\right) $ . If these summands do not cancel each other, they give the
leading term in $D^{\overrightarrow{\gamma }}H\left( t,x,y\right) |_{x=y}$
as $t\rightarrow 0_{+}$, of the form 
\begin{equation*}
\frac{1}{\left( 4\pi t\right) ^{n/2}}\cdot \left( \frac{1}{2t}\right) ^{%
\frac{\left\vert \overrightarrow{\gamma }\right\vert }{2}}\cdot \left(
-1\right) ^{a}A,
\end{equation*}%
where $A$ is the number of such summands, and $\left( -1\right) ^{a}$ is
their common sign.

To determine the number $A$, we build a one-one correspondence between such
summands and admissible graphs in Definition \ref{Graph-thry}. For each $%
\overrightarrow{\mu _{j}}=\left( i,i\right) $, $\left( i,\overline{i}\right) 
$ or $\left( \overline{i},\overline{i}\right) $ of the summand, we draw an
edge between the corresponding vertices in $\mathcal{S=}\left\{ 
\overrightarrow{\alpha }\right\} \amalg \left\{ \overrightarrow{\beta }%
\right\} $. Drawing $l$ edges on $\mathcal{S}$ with no intersecting vertex,
we obtain an admissible graph $G$. The sign on each edge of $G$ reflects the
sign in $\left( \ref{basic-term}\right) $. Taking product of them we see sign%
$\left( G\right) $ is exactly the sign of the summand. Conversely, given any
admissible graph $G$, we can read $\overrightarrow{\mu _{j}}$ from its
edges, and then can write down the corresponding summand $\left. \left( \Pi
_{j=1}^{l}D^{\overrightarrow{\mu _{j}}}\left( -\frac{r^{2}\left( x,y\right) 
}{4t}\right) \right) \right\vert _{x=y}\cdot U\left( t,x,x\right) $ in $%
\left. D_{y}^{\overrightarrow{\beta }}D_{x}^{\overrightarrow{\alpha }%
}H\left( t,x,y\right) \right\vert _{x=y}$. From Proposition \ref%
{Graph-computation} we know sign$\left( G\right) =\left( -1\right) ^{\frac{%
\left\vert \overrightarrow{\alpha }\right\vert -\left\vert \overrightarrow{%
\beta }\right\vert }{2}}$, so the summands are of the same sign $\left(
-1\right) ^{\frac{\left\vert \overrightarrow{\alpha }\right\vert -\left\vert 
\overrightarrow{\beta }\right\vert }{2}}$. Proposition \ref%
{Graph-computation} also gives the signed count of admissible graphs, which
is $A\left( \overrightarrow{\alpha },\overrightarrow{\beta }\right) $. Claim
2 is proved.

If each index appears with even multiplicity in $\left\{ \overrightarrow{%
\alpha }\right\} \amalg \left\{ \overrightarrow{\beta }\right\} $, then $%
\left\vert \overrightarrow{\gamma }\right\vert $ is even, $\left[ \frac{%
\left\vert \overrightarrow{\gamma }\right\vert }{2}\right] =\frac{\left\vert 
\overrightarrow{\gamma }\right\vert }{2}$. From the above argument we have
as $t\rightarrow 0_{+}$,%
\begin{eqnarray*}
&&\left( 4\pi t\right) ^{n/2}\left( 2t\right) ^{\left[ \frac{\left\vert 
\overrightarrow{\alpha }\right\vert +\left\vert \overrightarrow{\beta }%
\right\vert }{2}\right] }\cdot \left. D_{y}^{\overrightarrow{\beta }}D_{x}^{%
\overrightarrow{\alpha }}H\left( t,x,y\right) \right\vert _{x=y} \\
&\rightarrow &\left( 4\pi t\right) ^{n/2}\left( 2t\right) ^{\frac{\left\vert 
\overrightarrow{\gamma }\right\vert }{2}}\cdot \frac{1}{\left( 4\pi t\right)
^{n/2}}\cdot \left( \frac{1}{2t}\right) ^{\frac{\left\vert \overrightarrow{%
\gamma }\right\vert }{2}}A\left( \overrightarrow{\alpha },\overrightarrow{%
\beta }\right) =A\left( \overrightarrow{\alpha },\overrightarrow{\beta }%
\right) .
\end{eqnarray*}

If some index $i$ appears with odd multiplicity in $\left\{ \overrightarrow{%
\alpha }\right\} \amalg \left\{ \overrightarrow{\beta }\right\} $, then by
our definition $A\left( \overrightarrow{\alpha },\overrightarrow{\beta }%
\right) =0$. For a nonzero summand of the form 
\begin{equation*}
\left. \left( \Pi _{j=1}^{l}D^{\overrightarrow{\mu _{j}}}\left( -\frac{%
r^{2}\left( x,y\right) }{4t}\right) \right) \right\vert _{x=y}\cdot D^{%
\overrightarrow{\eta }}U,
\end{equation*}
by the equations in $\left( \ref{t-power}\right) $ and the total degree
condition $\left( \ref{total-degree}\right) $, it can have the $\frac{1}{t}$
power at most $\left[ \frac{\left\vert \overrightarrow{\gamma }\right\vert -1%
}{2}\right] $. If the power $\left[ \frac{\left\vert \overrightarrow{\gamma }%
\right\vert -1}{2}\right] $ $\ $is achieved, then $\left\vert \vec{\eta}%
\right\vert \leq 2$ by the total degree condition $\left( \ref{total-degree}%
\right) $, and $\left\vert \vec{\eta}\right\vert =2$ is only possible when $%
\left\vert \overrightarrow{\gamma }\right\vert $ is even. If $\left\vert 
\vec{\eta}\right\vert =0$, there will be a term $D^{\overrightarrow{\sigma }%
}\left( r^{2}\right) |_{x=y}$ in the summand, such that $\left\vert \vec{%
\sigma}\right\vert \leq 3$ and $i$ appears with odd multiplicity in $\vec{%
\sigma}$, making the summand zero by Lemma \ref{elimination-tools}. So we
must have $\left\vert \vec{\eta}\right\vert =1$ when $\left\vert 
\overrightarrow{\gamma }\right\vert $ is odd.

Similar to the argument in Claim 2, we have%
\begin{equation}
\left\vert \left. D^{\overrightarrow{\gamma }}H\left( t,x,y\right)
\right\vert _{x=y}\right\vert \leq C\frac{1}{\left( 4\pi t\right) ^{n/2}}%
\cdot \left( \frac{1}{t}\right) ^{\left[ \frac{\left\vert \overrightarrow{%
\gamma }\right\vert -1}{2}\right] }\left\vert D^{\overrightarrow{\eta }%
}U\right\vert .  \label{odd-decrease-power}
\end{equation}%
When $\left\vert \vec{\eta}\right\vert =1$, by $\left( \ref{U_deri}\right) $
we have $\left\vert D^{\overrightarrow{\eta }}U\right\vert \leq Ct$.
Therefore 
\begin{eqnarray*}
&&\left( 4\pi t\right) ^{n/2}\left( 2t\right) ^{\left[ \frac{\left\vert 
\overrightarrow{\alpha }\right\vert +\left\vert \overrightarrow{\beta }%
\right\vert }{2}\right] }\cdot \left. D_{y}^{\overrightarrow{\beta }}D_{x}^{%
\overrightarrow{\alpha }}H\left( t,x,y\right) \right\vert _{x=y} \\
&\rightarrow &\left\{ 
\begin{array}{c}
t^{\left[ \frac{\left\vert \overrightarrow{\gamma }\right\vert }{2}\right]
}\cdot O\left( t^{-\left[ \frac{\left\vert \overrightarrow{\gamma }%
\right\vert -1}{2}\right] }\right) \cdot O\left( t\right) =O\left( t\right)
\rightarrow 0\text{ (if }\left\vert \overrightarrow{\gamma }\right\vert 
\text{ is odd),} \\ 
t^{\left[ \frac{\left\vert \overrightarrow{\gamma }\right\vert }{2}\right]
}\cdot O\left( t^{-\left[ \frac{\left\vert \overrightarrow{\gamma }%
\right\vert -1}{2}\right] }\right) \cdot O\left( 1\right) =O\left( t\right)
\rightarrow 0\text{ (if }\left\vert \overrightarrow{\gamma }\right\vert 
\text{ is even).}%
\end{array}%
\right\} =A\left( \overrightarrow{\alpha },\overrightarrow{\beta }\right) 
\text{.}
\end{eqnarray*}%
Proposition \ref{high-derivative} is proved.
\end{proof}

Now we are ready to give the proof of Theorem \ref{high-jet-relation}.

\begin{proof}
Letting $\overrightarrow{\alpha }=\overrightarrow{\beta }$ in Proposition %
\ref{high-derivative}, and noticing 
\begin{equation*}
\left\langle D^{\overrightarrow{\alpha }}\Phi _{t},D^{\overrightarrow{\beta }%
}\Phi _{t}\right\rangle \left( x\right) =D_{y}^{\overrightarrow{\beta }%
}D_{x}^{\overrightarrow{\alpha }}H\left( t,x,y\right) |_{x=y},
\end{equation*}
we have%
\begin{eqnarray*}
&&\left( 4\pi t\right) ^{n/2}\left( 2t\right) ^{\left\vert \overrightarrow{%
\alpha }\right\vert }\cdot \left\vert D^{\overrightarrow{\alpha }}\Phi
_{t}\left( x\right) \right\vert ^{2} \\
&=&\left( 4\pi t\right) ^{n/2}\left( 2t\right) ^{\left[ \frac{\left\vert 
\overrightarrow{\alpha }\right\vert +\left\vert \overrightarrow{\alpha }%
\right\vert }{2}\right] }\cdot \left. D_{y}^{\overrightarrow{\alpha }}D_{x}^{%
\overrightarrow{\alpha }}H\left( t,x,y\right) \right\vert _{x=y}\rightarrow
A\left( \overrightarrow{\alpha },\overrightarrow{\alpha }\right) ,
\end{eqnarray*}%
as $t\rightarrow 0_{+}$, i.e. 
\begin{equation}
\left\vert D^{\overrightarrow{\alpha }}\Phi _{t}\left( x\right) \right\vert
^{2}\rightarrow \frac{1}{\left( 4\pi t\right) ^{n/2}}\left( \frac{1}{2t}%
\right) ^{\left\vert \overrightarrow{\alpha }\right\vert }A\left( 
\overrightarrow{\alpha },\overrightarrow{\alpha }\right) .
\label{norm-limit}
\end{equation}

If $\left\vert \overrightarrow{\alpha }\right\vert +\left\vert 
\overrightarrow{\beta }\right\vert $ is even, by Proposition \ref%
{high-derivative} we have 
\begin{eqnarray*}
&&\lim_{t\rightarrow 0_{+}}\frac{\left\langle D^{\overrightarrow{\alpha }%
}\Phi _{t}\left( x\right) ,D^{\overrightarrow{\beta }}\Phi _{t}\left(
x\right) \right\rangle }{\left\vert D^{\overrightarrow{\alpha }}\Phi
_{t}\left( x\right) \right\vert \left\vert D^{\overrightarrow{\beta }}\Phi
_{t}\left( x\right) \right\vert } \\
&=&\lim_{t\rightarrow 0_{+}}\frac{\left( 4\pi t\right) ^{n/2}\left(
2t\right) ^{\frac{\left\vert \overrightarrow{\alpha }\right\vert +\left\vert 
\overrightarrow{\beta }\right\vert }{2}}\cdot \left\langle D^{%
\overrightarrow{\alpha }}\Phi _{t}\left( x\right) ,D^{\overrightarrow{\beta }%
}\Phi _{t}\left( x\right) \right\rangle }{\left( \left( 4\pi t\right)
^{n/4}\left( 2t\right) ^{\frac{\left\vert \overrightarrow{\alpha }%
\right\vert }{2}}\left\vert D^{\overrightarrow{\alpha }}\Phi _{t}\left(
x\right) \right\vert \right) \cdot \left( \left( 4\pi t\right) ^{n/4}\left(
2t\right) ^{\frac{\left\vert \overrightarrow{\beta }\right\vert }{2}%
}\left\vert D^{\overrightarrow{\beta }}\Phi _{t}\left( x\right) \right\vert
\right) } \\
&=&\left. \left[ A\left( \overrightarrow{\alpha },\overrightarrow{\beta }%
\right) \right] \right/ \left[ A\left( \overrightarrow{\alpha },%
\overrightarrow{\alpha }\right) A\left( \overrightarrow{\beta },%
\overrightarrow{\beta }\right) \right] ^{1/2}=B\left( \overrightarrow{\alpha 
},\overrightarrow{\beta }\right) .
\end{eqnarray*}

If $\left\vert \overrightarrow{\alpha }\right\vert +\left\vert 
\overrightarrow{\beta }\right\vert $ is odd, then some index must appear
with odd multiplicity in $\left\{ \overrightarrow{\alpha }\right\} \amalg
\left\{ \overrightarrow{\beta }\right\} $. By $\left( \ref%
{odd-decrease-power}\right) $ and $\left( \ref{norm-limit}\right) $,%
\begin{eqnarray*}
&&\lim_{t\rightarrow 0_{+}}\frac{\left\vert \left\langle D^{\overrightarrow{%
\alpha }}\Phi _{t}\left( x\right) ,D^{\overrightarrow{\beta }}\Phi
_{t}\left( x\right) \right\rangle \right\vert }{\left\vert D^{%
\overrightarrow{\alpha }}\Phi _{t}\left( x\right) \right\vert \left\vert D^{%
\overrightarrow{\beta }}\Phi _{t}\left( x\right) \right\vert } \\
&\leq &\lim_{t\rightarrow 0_{+}}\left. C\frac{1}{\left( 4\pi t\right) ^{n/2}}%
\left( \frac{1}{t}\right) ^{\left[ \frac{\left\vert \overrightarrow{\alpha }%
\right\vert +\left\vert \overrightarrow{\beta }\right\vert -1}{2}\right]
}\right/ \left( \frac{1}{\left( 4\pi t\right) ^{n/4}}\left( \frac{1}{2t}%
\right) ^{\frac{\left\vert \overrightarrow{\alpha }\right\vert }{2}}A\left( 
\overrightarrow{\alpha },\overrightarrow{\alpha }\right) \cdot \frac{1}{%
\left( 4\pi t\right) ^{n/4}}\left( \frac{1}{2t}\right) ^{\frac{\left\vert 
\overrightarrow{\beta }\right\vert }{2}}A\left( \overrightarrow{\beta },%
\overrightarrow{\beta }\right) \right) \\
&=&O\left( t\right) \rightarrow 0=B\left( \overrightarrow{\alpha },%
\overrightarrow{\beta }\right) \text{.}
\end{eqnarray*}%
Theorem \ref{high-jet-relation} is proved.
\end{proof}

\begin{remark}
\label{switch-ordering}If we are only concerned with the leading order terms
in $\left\langle D^{\overrightarrow{\alpha }}\Phi _{t},D^{\overrightarrow{%
\beta }}\Phi _{t}\right\rangle \left( x\right) $ or $D_{y}^{\overrightarrow{%
\beta }}D_{x}^{\overrightarrow{\alpha }}H\left( t,x,y\right) |_{x=y}$ as $%
t\rightarrow 0_{+}$, the ordering of derivatives within $\overrightarrow{%
\alpha }$ and $\overrightarrow{\beta }$ are not important, and we can regard 
$\overrightarrow{\alpha }$ and $\overrightarrow{\beta }$ as \emph{sets} of
their indices, respectively. This is because when we switch the ordering of
derivatives, say from $D_{x}^{i}D_{x}^{j}$ to $D_{x}^{j}D_{x}^{i}$, their
difference is a lower order differential operator, and for lower order
differential operator $D^{\overrightarrow{\alpha ^{\prime }}}$ with $%
\left\vert \overrightarrow{\alpha ^{\prime }}\right\vert <\left\vert 
\overrightarrow{\alpha }\right\vert $, $\left\vert D^{\overrightarrow{\alpha
^{\prime }}}\Phi _{t}\right\vert =o\left( \left\vert D^{\overrightarrow{%
\alpha }}\Phi _{t}\right\vert \right) $ as $t\rightarrow 0_{+}$ on $M$ by $%
\left( \ref{length}\right) $.
\end{remark}

\begin{remark}
We remark that except the constant term $A\left( \overrightarrow{\alpha },%
\overrightarrow{\beta }\right) $, the next order terms in $t^{\frac{n}{2}+%
\left[ \frac{\left\vert \overrightarrow{\alpha }\right\vert +\left\vert 
\overrightarrow{\beta }\right\vert }{2}\right] }\cdot \left. D_{y}^{%
\overrightarrow{\beta }}D_{x}^{\overrightarrow{\alpha }}H\left( t,x,y\right)
\right\vert _{x=y}$ do depend on $g$ (or curvature), and are important for
some geometric applications (e.g. Proposition \ref{Asymp-Gauss}), but to
compute them is significantly harder.
\end{remark}

\section{Applications\label{applications}}

In this section, we give several applications of the high-jet relations of $%
\psi _{t}$ in Theorem \ref{high-jet-relation} and Proposition \ref%
{high-derivative}. The applications to the mean curvature and the Riemannian
curvature tensor are related to the \emph{extrinsic geometry} and \emph{%
intrinsic geometry} of $\psi _{t}\left( M\right) \subset l^{2}$ as $%
t\rightarrow 0_{+}$ respectively. We also discuss the \emph{approximation }%
of the geometry of $\left( M,g\right) $ by finitely many eigenfunctions.

\subsection{Mean curvature of $\protect\psi _{t}\left( M\right) $ in $l^{2}$}

The first application is about the mean curvature of the embedded image $%
\psi _{t}\left( M\right) \subset l^{2}$. Intuitively speaking, as $%
t\rightarrow 0_{+}$, the extrinsic geometry of $\psi _{t}\left( M\right)
\subset l^{2}$ locally looks the same at any point, and is evenly bumpy
everywhere. More precisely we have

\begin{proposition}
\label{umbilical}For the heat kernel embedding $\psi _{t}:M\rightarrow l^{2}$
and $x\in M$, let 
\begin{equation*}
A\left( x,t\right) :T_{\psi _{t}\left( x\right) }\psi _{t}\left( M\right)
\times T_{\psi _{t}\left( x\right) }\psi _{t}\left( M\right) \rightarrow
T_{\psi _{t}\left( x\right) }\psi _{t}\left( M\right) ^{\perp }
\end{equation*}%
be the second fundamental form of the embedded image $\psi _{t}\left(
M\right) \subset l^{2}$ and $H\left( x,t\right) $ be the mean curvature
vector at $\psi _{t}\left( x\right) $. We have

\begin{enumerate}
\item (\cite{WZ} Corollary 22) $\sqrt{t}A\left( x,t\right) $ converges to
certain normal form for all $x$ on $M$ as $t\rightarrow 0_{+}$. The mean
curvature vector $H\left( x,t\right) $, after scaled by a factor $\sqrt{t}$,
converges to constant length: 
\begin{equation}
\lim_{t\rightarrow 0_{+}}\sqrt{t}\left\vert H\left( x,t\right) \right\vert =%
\sqrt{\frac{n+2}{2n}}.  \label{mean-curvature-length}
\end{equation}

\item $\psi _{t}\left( M\right) $ is asymptotically umbilical in the mean
curvature vector direction as $t\rightarrow 0_{+}$, in the following sense: 
\begin{equation*}
t\cdot \lim_{t\rightarrow 0_{+}}S\left( x,t\right) \left\langle
W,H\right\rangle =-\frac{3}{2}W\text{, for any }W\in T_{\psi _{t}\left(
x\right) }\psi _{t}\left( M\right) \text{,}
\end{equation*}%
where 
\begin{equation*}
S\left( x,t\right) :T_{\psi _{t}\left( x\right) }\psi _{t}\left( M\right)
\times T_{\psi _{t}\left( x\right) }\psi _{t}\left( M\right) ^{\perp
}\rightarrow T_{\psi _{t}\left( x\right) }\psi _{t}\left( M\right)
\end{equation*}%
is the second fundamental tensor on $\psi _{t}\left( M\right) $ defined by $%
S\left( X,\upsilon \right) =\left( \nabla _{X}\upsilon \right) ^{\intercal }$
for any $X\in T_{\psi _{t}\left( x\right) }\psi _{t}\left( M\right) $ and $%
\upsilon \in T_{\psi _{t}\left( x\right) }\psi _{t}\left( M\right) ^{\perp }$%
, $\intercal :T_{\psi _{t}\left( x\right) }l^{2}\rightarrow T_{\psi
_{t}\left( x\right) }\psi _{t}\left( M\right) $ is the projection, and
\textquotedblleft $\perp $\textquotedblright\ means the orthogonal
complement in $l^{2}$.
\end{enumerate}

The convergence is uniform for all $x$ on $M$.
\end{proposition}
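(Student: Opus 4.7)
The plan is to reduce asymptotic umbilicity to the high-jet relations of Theorem \ref{high-jet-relation} through the Weingarten identity
\[
\langle S(W,\upsilon),V\rangle = -\langle A(W,V),\upsilon\rangle,
\]
obtained by differentiating $\langle \upsilon,V\rangle = 0$ along $W$, and then to evaluate the right-hand side with $\upsilon = H$. First I would fix $x\in M$, choose normal coordinates, and work with the associated basis $\{V_i = \partial/\partial x^i\}$. By Theorem \ref{high-jet-relation}, the push-forward vectors $\{d\psi_t(V_i)\} = \{\nabla_i\psi_t\}$ satisfy $|\nabla_i\psi_t|^2 \to 1$ and $\langle \nabla_i\psi_t,\nabla_j\psi_t\rangle\to 0$ for $i\ne j$, so they form an asymptotically orthonormal frame of $T_{\psi_t(x)}\psi_t(M)$. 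It therefore suffices to verify $tS(W,H)\to cW$ on the basis vectors $W = d\psi_t(V_k)$ by testing against $V = d\psi_t(V_l)$.

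Second, I would expand $A(V_k,V_l) = (\nabla_k\nabla_l\psi_t)^\perp$ and the mean curvature vector $H = \tfrac{1}{n}\sum_i A(V_i,V_i)$ in terms of partial derivatives of $\psi_t$. Applying Theorem \ref{high-jet-relation}(1) to $\langle \nabla_k\nabla_l\psi_t,\nabla_m\psi_t\rangle$ shows that the tangent projection subtracted from $\nabla_k\nabla_l\psi_t$ is $o(t^{-1/2})$, strictly smaller than $|\nabla_k\nabla_l\psi_t|\sim t^{-1/2}$, so at the leading order
\[
\langle A(V_k,V_l),H\rangle = \tfrac{1}{n}\sum_i \langle \nabla_k\nabla_l\psi_t,\nabla_i\nabla_i\psi_t\rangle + o(t^{-1}).
\]

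The combinatorial heart of the argument is the evaluation of these pairings. When $k \ne l$, the multiset $\{k,l,i,i\}$ contains indices of odd total multiplicity, so Part (1) of Theorem \ref{high-jet-relation} gives vanishing at order $t^{-1}$, and the off-diagonal entries $\langle A(V_k,V_l),H\rangle$ tend to zero after multiplication by $t$. When $k = l$, Part (3) gives $|\nabla_k\nabla_k\psi_t|^2 \to \tfrac{3}{2t}$ (from $A((k,k),(k,k)) = 3$), while for $i \ne k$ the cross-term $\langle \nabla_k\nabla_k\psi_t,\nabla_i\nabla_i\psi_t\rangle$ converges to $\tfrac{1}{2t}$, via the limiting angle $B((k,k),(i,i)) = \tfrac{1}{3}$ of Theorem \ref{uni-lin-indep}. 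Summing over $i$, every diagonal entry $\langle A(V_k,V_k),H\rangle$ converges to the same multiple of $t^{-1}$ independent of $k$, and the Weingarten identity then shows that $tS(\cdot,H)$ converges to a scalar multiple of the identity on the tangent space, which is the claimed asymptotic umbilicity, with the scalar read off from the preceding diagonal computation.

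The main obstacle I anticipate is the careful handling of the lower-order corrections: the tangent component discarded in passing from $\nabla_k\nabla_l\psi_t$ to $A(V_k,V_l)$, and the $O(t)$ deviation of the induced metric $\psi_t^*g_0$ from $g$ given by \eqref{asymp-isom-emb}, which perturbs both the inverse-metric factors in the trace defining $H$ and the projection defining $A$. Each of these corrections is strictly subleading, but to extract the umbilical coefficient cleanly, and to obtain the uniform-in-$x$ convergence asserted in the proposition, one needs the uniform-in-$x$ assertion of Theorem \ref{high-jet-relation} together with the quantitative remainder estimate of Proposition \ref{high-derivative}.
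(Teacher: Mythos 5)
Your argument takes a genuinely different route from the paper's proof of Item~2. The paper differentiates the mean curvature vector directly, writing $S(\nabla_{V_i}\psi_t,H)=(\nabla_{V_i}H)^\intercal$ with $H\approx\frac{1}{n}\sum_k\nabla_{V_k}\nabla_{V_k}\psi_t$, which forces it into the $3$-jet pairings $\langle\nabla_i\nabla_k\nabla_k\psi_t,\nabla_j\psi_t\rangle$. You instead invoke the Weingarten duality $\langle S(W,\nu),V\rangle=-\langle A(W,V),\nu\rangle$, which keeps the whole computation at the level of the $2$-jet pairings $\langle\nabla_k\nabla_l\psi_t,\nabla_i\nabla_i\psi_t\rangle$ already supplied by Theorem~\ref{uni-lin-indep}. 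Both are legitimate; yours is simpler since it avoids the $3$-jet constants entirely, and your handling of the projection errors (tangent part of $\nabla_k\nabla_l\psi_t$ being $o(t^{-1/2})$ against a pairing of order $t^{-1}$) is correct.

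But you should not defer the final scalar to "read off" --- carrying your diagonal computation to the end gives, for $k=l$,
\begin{equation*}
\langle S(\nabla_k\psi_t,H),\nabla_k\psi_t\rangle
\ \longrightarrow\ -\frac{1}{n}\Bigl[\frac{3}{2t}+(n-1)\frac{1}{2t}\Bigr]
\ =\ -\frac{n+2}{2nt},
\end{equation*}
so the limiting umbilical scalar is $-\tfrac{n+2}{2n}$, \emph{not} the $-\tfrac{3}{2}$ asserted in the proposition. The paper's constant comes from an arithmetic slip in its $3$-jet route: for $i=j\neq k$ it has $B\bigl((i,k,k),(i)\bigr)=-\tfrac{1}{\sqrt{3}}$ and $\bigl|\nabla_i\nabla_k\nabla_k\psi_t\bigr|\to\tfrac{\sqrt{3}}{2t}$, whose product is $-\tfrac{1}{2t}$, not the $-\tfrac{3}{2t}$ written there; summing $k$ then gives $-\tfrac{n+2}{2nt}$, not $-\tfrac{3}{2t}$. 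Your value is confirmed independently by the trace identity $\operatorname{tr}S(\cdot,H)=-n|H|^2$ combined with the $\sqrt{t}|H|\to\sqrt{\tfrac{n+2}{2n}}$ of Item~1, which forces the eigenvalue of $tS(\cdot,H)$ to be $-\tfrac{n+2}{2n}$ (the paper's $-\tfrac{3}{2}$ would contradict Item~1 for every $n\neq 1$). So: your approach is sound and arrives at the correct constant; state it explicitly, and note that it disagrees with the proposition as printed.
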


\begin{proof}
For any $x\in M$, let $\left( x_{1},\cdots ,x_{n}\right) $ be a local
coordinates near $x$ such that the coordinate vectors $\left\{ \frac{%
\partial }{\partial x^{i}}\right\} _{i=1}^{n}:=\left\{ V_{i}\right\}
_{i=1}^{n}$ are orthonormal at $x$. The \emph{second fundamental form} of
the submanifold $\psi _{t}\left( M\right) \subset $ $l^{2}$ can be written as%
\begin{equation*}
A\left( x,t\right) =\Sigma _{1\leq i,j\leq n}h_{ij}\left( x,t\right)
dx^{i}dx^{j}
\end{equation*}%
where $h_{jk}\left( x,t\right) $ $\left( 1\leq j,k\leq n\right) $ are
vectors in $l^{2}$. Then as $t\rightarrow 0_{+}$, we have 
\begin{equation}
\lim_{t\rightarrow 0_{+}}\left( h_{jk}-\nabla _{V_{j}}\nabla _{V_{k}}\psi
_{t}\right) \left( x,t\right) =0\text{, for }1\leq j,k\leq n.
\label{2nd-funda-form-Taylor}
\end{equation}%
This is due to the $2$-jet relation 
\begin{eqnarray}
&&\lim_{t\rightarrow 0_{+}}\left\langle \nabla _{V_{i}}\psi _{t},\nabla
_{V_{j}}\nabla _{V_{k}}\psi _{t}\right\rangle \left( x,t\right)  \notag \\
&=&\lim_{t\rightarrow 0_{+}}\left( 4\pi t\right) ^{n/2}\left( 2t\right)
\left\langle \nabla _{V_{i}}\Phi _{t},\nabla _{V_{j}}\nabla _{V_{k}}\Phi
_{t}\right\rangle \left( x,t\right) =0  \label{2-jet-relation}
\end{eqnarray}%
from $\left( \ref{DyDxH_vanishing}\right) $, so the second order terms in
the Taylor expansion of $\psi _{t}:M\rightarrow l^{2}$ approximate the
second fundamental form of $\psi _{t}\left( M\right) \subset $ $l^{2}$ as $%
t\rightarrow 0_{+}$. Therefore, for the \emph{mean curvature} $H\left(
x,t\right) =\frac{1}{n}\Sigma _{k=1}^{n}h_{kk}\left( x\right) $, we have%
\begin{equation}
\lim_{t\rightarrow 0_{+}}\left( H\left( x,t\right) -\frac{\Sigma
_{k=1}^{n}\nabla _{V_{k}}\nabla _{V_{k}}\psi _{t}}{n}\right) \left(
x,t\right) =0.  \label{mean-curvature}
\end{equation}

Item 1 was proved in \cite{WZ} using the $2$-jet relations of $\psi _{t}$ in
Theorem \ref{uni-lin-indep}. For Item 2, using the $3$-jet relations in
Theorem \ref{high-jet-relation} for $\overrightarrow{\alpha }=\left(
i,k,k\right) $ and $\overrightarrow{\beta }=\left( j\right) $, we have%
\begin{eqnarray*}
&&\frac{\left\langle \nabla _{V_{i}}\nabla _{V_{k}}\nabla _{V_{k}}\psi
_{t},\nabla _{V_{j}}\psi _{t}\right\rangle \left( x,t\right) }{\left\vert
\nabla _{V_{i}}\nabla _{V_{k}}\nabla _{V_{k}}\psi \right\vert \left\vert
\nabla _{V_{j}}\psi _{t}\right\vert \left( x,t\right) }\rightarrow B\left( 
\overrightarrow{\alpha },\overrightarrow{\beta }\right) \\
&=&\left\{ 
\begin{tabular}{ll}
$\left( -1\right) ^{\frac{2}{2}}\cdot \frac{\left( 4!/2!\right) }{\left[
\left( 6!/3!\right) \left( 2!/1!\right) \right] ^{1/2}}=-\sqrt{\frac{3}{5}}%
\text{,}$ & $\text{ if }i=j=k,$ \\ 
$\left( -1\right) ^{\frac{2}{2}}\cdot \frac{\left( 2!/1!\right) }{\left[
\left( 2!/1!\right) \left( 2!/1!\right) \right] ^{1/2}}\cdot \frac{\left(
2!/1!\right) }{\left[ \left( 4!/2!\right) \left( 0!/0!\right) \right] ^{1/2}}%
=-\frac{1}{\sqrt{3}}\text{, }$ & $\text{\ if }i=j\neq k,$ \\ 
$0\text{,}$ & $\text{\ if }i\neq j,$%
\end{tabular}%
\right.
\end{eqnarray*}%
and%
\begin{eqnarray*}
\left\vert \nabla _{V_{i}}\nabla _{V_{k}}\nabla _{V_{k}}\psi _{t}\left(
x,t\right) \right\vert &\rightarrow &\sqrt{2}\left( 4\pi \right) ^{n/4}t^{%
\frac{n+2}{4}}\cdot \left( \frac{1}{4\pi t}\right) ^{n/4}\left( \frac{1}{2t}%
\right) ^{3/2}A\left( \overrightarrow{\alpha },\overrightarrow{\alpha }%
\right) ^{1/2} \\
&=&\left\{ 
\begin{array}{c}
\frac{1}{2t}\sqrt{\frac{6!}{2^{3}3!}}=\frac{1}{2t}\sqrt{15}\text{, if }i=k
\\ 
\frac{1}{2t}\sqrt{\frac{4!}{2^{2}2!}\cdot \frac{2!}{2^{1}2!}}=\frac{1}{2t}%
\sqrt{3}\text{, if }i\neq k%
\end{array}%
\right. , \\
\left\vert \nabla _{V_{j}}\psi _{t}\left( x,t\right) \right\vert
&\rightarrow &\sqrt{2}\left( 4\pi \right) ^{n/4}t^{\frac{n+2}{4}}\cdot
\left( \frac{1}{4\pi t}\right) ^{n/4}\left( \frac{1}{2t}\right)
^{1/2}A\left( \overrightarrow{\beta },\overrightarrow{\beta }\right)
^{1/2}=1.
\end{eqnarray*}%
Therefore as $t\rightarrow 0_{+}$, 
\begin{equation*}
\left\langle \nabla _{V_{i}}\nabla _{V_{k}}\nabla _{V_{k}}\psi _{t},\nabla
_{V_{j}}\psi _{t}\right\rangle \left( x,t\right) \rightarrow \left\{ 
\begin{array}{c}
-\sqrt{\frac{3}{5}}\cdot \frac{1}{2t}\sqrt{15}=-\frac{3}{2t}\text{, if }%
i=j=k, \\ 
-\frac{1}{\sqrt{3}}\cdot \frac{1}{2t}\sqrt{3}=-\frac{3}{2t}\text{, \ \ if }%
i=j\neq k.%
\end{array}%
\right\} =-\frac{3}{2t}.
\end{equation*}%
By the definition of the second fundamental tensor,%
\begin{eqnarray*}
&&2t\cdot \lim_{t\rightarrow 0_{+}}\left\langle S\left( x,t\right) \left(
\nabla _{V_{i}}\psi _{t},H\right) ,\nabla _{V_{j}}\psi _{t}\right\rangle
\left( x,t\right) \\
&=&2t\cdot \lim_{t\rightarrow 0_{+}}\left\langle \nabla _{\nabla
_{V_{i}}\psi _{t}}H\left( \psi _{t}\left( x\right) ,t\right) ,\nabla
_{V_{j}}\psi _{t}\right\rangle \left( \psi _{t}\left( x\right) ,t\right) \\
&=&2t\cdot \lim_{t\rightarrow 0_{+}}\left\langle \nabla _{V_{i}}H\left(
x,t\right) ,\nabla _{V_{j}}\psi _{t}\right\rangle \left( x,t\right) \\
&=&2t\cdot \lim_{t\rightarrow 0_{+}}\left\langle \nabla _{V_{i}}\left( \frac{%
1}{n}\Sigma _{k=1}^{n}\nabla _{V_{k}}\nabla _{V_{k}}\psi _{t}\right) ,\nabla
_{V_{j}}\psi _{t}\right\rangle \left( x,t\right) \\
&=&\frac{1}{n}2t\cdot \Sigma _{k=1}^{n}\lim_{t\rightarrow 0_{+}}\left\langle
\nabla _{V_{i}}\nabla _{V_{k}}\nabla _{V_{k}}\psi _{t},\nabla _{V_{j}}\psi
_{t}\right\rangle \left( x,t\right) \\
&=&\left\{ 
\begin{tabular}{ll}
$\frac{1}{n}2t\cdot \Sigma _{k=1}^{n}\left( -\frac{3}{2t}\right) ,$ & if $%
i=j,$ \\ 
$0,$ & if $i\neq j,$%
\end{tabular}%
\right. \\
&=&-3\delta _{ij}.
\end{eqnarray*}%
Since $\left\{ \nabla _{V_{j}}\psi _{t}\right\} _{1\leq j\leq n}$ span $%
T_{\psi _{t}\left( x\right) }\psi _{t}\left( M\right) $, this means for any $%
W\in T_{\psi _{t}\left( x\right) }\psi _{t}\left( M\right) $,%
\begin{equation*}
t\cdot \lim_{t\rightarrow 0_{+}}S\left( x,t\right) \left\langle
W,H\right\rangle =-\frac{3}{2}W\text{. }
\end{equation*}%
So $\psi _{t}\left( M\right) $ is asymptotically umbilical in the mean
curvature vector direction as $t\rightarrow 0_{+}$.
\end{proof}

\begin{remark}
In other words, for the image $\frac{1}{\sqrt{t}}\psi _{t}\left( M\right)
\subset l^{2}$, as $t\rightarrow 0_{+}$ its mean curvature vector converges
to constant length $\sqrt{\frac{n+2}{2n}}$, and $\frac{1}{\sqrt{t}}\psi
_{t}\left( M\right) $ is asymptotically umbilical in the mean curvature
vector direction. If we want to construct constant mean curvature
submanifolds in $\mathbb{R}^{q}$, perhaps we can truncate $\frac{1}{\sqrt{t}}%
\psi _{t}\left( M\right) \subset l^{2}$ to $\mathbb{R}^{q}$ by taking the
first $q$ components for large $q$, and then perturb it by the implicit
function theorem.
\end{remark}

\subsection{Riemannian curvature tensor}

In the second application we give a \emph{heat kernel embedding}
interpretation of the Levi-Civita connection and the Riemannian curvature
tensor. The idea is that $\psi _{t}:M\rightarrow l^{2}$ is an almost
isometric embedding, so the \textquotedblleft extrinsic\textquotedblright\
constructions of the Levi-Civita connection and Gaussian curvature of
surfaces in $\mathbb{R}^{3}$ using the ambient space can be mimicked here,
with $\mathbb{R}^{3}$ replaced by $l^{2}$. The difficulty is that we need to
take the limits as $t\rightarrow 0_{+}$ in our interpretation, and to show
these limits exist we need the high-jet relations in Proposition \ref%
{high-derivative} and some refinement.

As pointed to the author by L. I. Nicolaescu, there exists a \emph{random
function} interpretation of the Levi-Civita connection and the Riemannian
curvature tensor in Section 12 of \cite{AT} (equation (12.2.6) and Lemma
12.2.1, respectively). If the random function $f$ in \cite{AT} is taken as $%
f_{t}=\Sigma _{j\geq 1}u_{j}\phi _{j}\left( x\right) $ with the coefficients 
$u_{j}$ being independent Gaussian random variables with the expectation $%
E\left( u_{j}\right) =0$ and variance $Var\left( u_{j}\right) =e^{-\lambda
_{j}t}$, then Lemma \ref{covariant-deri} and Proposition \ref{Asymp-Gauss}
follows from the results in \cite{AT} by taking limits as $t\rightarrow 0$.
(For this approach, see \cite{Ni}, appendix B.) 

As before, for any $p\in M$, we choose the normal coordinates $\left(
x^{1},\cdots x^{n}\right) $ at $p$, and choose an orthonormal frame field $%
\left\{ V_{j}\right\} _{j=1}^{n}$ near $p$ such that $\left\{ V_{j}\right\}
_{i=1}^{n}|_{p}=\left\{ \frac{\partial }{\partial x^{j}}\right\} $. Let $%
\nabla $ be the Levi-Civita connection of $\left( M,g\right) $, and $\nabla
_{V_{j}}:=\nabla _{j}$ for $1\leq j\leq n$.

\begin{lemma}
\label{covariant-deri} (Levi-Civita Connection) For any $X,Y\in T_{p}M$, we
have 
\begin{eqnarray}
\nabla _{X}Y\left( p\right)  &=&\lim_{t\rightarrow 0_{+}}\Sigma
_{j=1}^{n}\left\langle \nabla _{X}\nabla _{Y}\psi _{t},\nabla _{V_{j}}\psi
_{t}\right\rangle \left( p\right) V_{j}\left( p\right)   \label{Levi-Civita}
\\
&=&\lim_{t\rightarrow 0_{+}}d\psi _{t}^{-1}\left[ \left( \nabla _{X}\nabla
_{Y}\psi _{t}\left( p\right) \right) ^{\intercal }\right] \text{,}  \notag
\end{eqnarray}%
where $\intercal :T_{\psi _{t}\left( p\right) }l^{2}\rightarrow T_{\psi
_{t}\left( p\right) }\psi _{t}\left( M\right) $ is the orthogonal projection
with respect to the standard metric on $l^{2}$, and on the right hand side $%
\nabla _{X}$ and $\nabla _{Y}$ are the derivatives for $l^{2}$-valued
functions $f:M\rightarrow l^{2}$.
\end{lemma}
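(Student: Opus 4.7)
The plan is to work in Riemannian normal coordinates $(x^{1},\ldots,x^{n})$ centered at $p$, in which the coordinate fields $\partial_{j}$ coincide with the orthonormal frame $V_{j}$ at $p$ and all Christoffel symbols of $g$ vanish at $p$. Both sides of \eqref{Levi-Civita} depend tensorially on $X\in T_{p}M$ and linearly on the $1$-jet of an extension of $Y$, so it suffices to extend $Y = Y^{j}V_{j}$ near $p$ and verify the identity for this choice. The key ingredients I would use are: (i) the asymptotic isometry of $\psi_{t}$, i.e. the $1$-jet limit $\langle \nabla_{V_{j}}\psi_{t},\nabla_{V_{k}}\psi_{t}\rangle(p)\to \delta_{jk}$, and (ii) the odd-multiplicity vanishing \eqref{DyDxH_vanishing} for the $3$-jet inner product furnished by Proposition \ref{high-derivative}.

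Since $\psi_{t}$ is $l^{2}$-valued, $\nabla_{Y}\psi_{t}=Y^{j}\nabla_{V_{j}}\psi_{t}$ is the componentwise directional derivative, and a second Leibniz differentiation gives
\begin{equation*}
\nabla_{X}\nabla_{Y}\psi_{t}\bigr|_{p} \;=\; X(Y^{j})(p)\,\nabla_{V_{j}}\psi_{t}(p)\;+\;Y^{j}(p)\,X^{i}\,\nabla_{V_{i}}\nabla_{V_{j}}\psi_{t}(p).
\end{equation*}
Pairing with $\nabla_{V_{k}}\psi_{t}(p)$ decomposes the inner product into a $1$-jet piece and a $3$-jet piece. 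For the $3$-jet piece with $\overrightarrow{\alpha}=(i,j)$ and $\overrightarrow{\beta}=(k)$, the multiset $\{i,j\}\amalg\{k\}$ contains an index of odd total multiplicity, hence $A(\overrightarrow{\alpha},\overrightarrow{\beta})=0$ and \eqref{DyDxH_vanishing}, multiplied by the $\psi_{t}$-normalization factor $2(4\pi)^{n/2}t^{(n+2)/2}$, yields $\langle \nabla_{V_{i}}\nabla_{V_{j}}\psi_{t},\nabla_{V_{k}}\psi_{t}\rangle(p)=O(t)$ uniformly in $p$. Summing over $k$ therefore gives
\begin{equation*}
\lim_{t\to 0_{+}}\sum_{k=1}^{n}\langle \nabla_{X}\nabla_{Y}\psi_{t},\nabla_{V_{k}}\psi_{t}\rangle(p)\,V_{k}(p)\;=\;\sum_{k=1}^{n}X(Y^{k})(p)\,V_{k}(p),
\end{equation*}
which equals $\nabla_{X}Y(p)$ because the Christoffel symbols vanish at $p$ in normal coordinates. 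This proves the first equality of \eqref{Levi-Civita}.

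For the second equality, I would observe that the $1$-jet asymptotic isometry shows $\{\nabla_{V_{j}}\psi_{t}(p)\}_{j=1}^{n}$ is an asymptotically orthonormal basis of the subspace $T_{\psi_{t}(p)}\psi_{t}(M)\subset l^{2}$; its Gram matrix $G_{t}=I+o(1)$ is invertible for small $t$. Hence the orthogonal projection $\top$ onto this subspace equals $\sum_{j,k}(G_{t}^{-1})_{jk}\langle \cdot,\nabla_{V_{k}}\psi_{t}\rangle \nabla_{V_{j}}\psi_{t}$, and applying $d\psi_{t}^{-1}$ (which sends $\nabla_{V_{j}}\psi_{t}(p)$ to $V_{j}$) and passing to the limit collapses $G_{t}^{-1}$ to $I$, yielding exactly the sum on the first line of \eqref{Levi-Civita}. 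The main obstacle is Step~2: Theorem \ref{uni-lin-indep} only bounds $\langle \nabla_{i}\nabla_{j}\psi_{t},\nabla_{k}\psi_{t}\rangle$ relatively to the product of norms $|\nabla_{i}\nabla_{j}\psi_{t}||\nabla_{k}\psi_{t}|$, which itself grows like $t^{-1/2}$; the absolute decay $o(1)$ needed here comes only from the refined vanishing rate \eqref{DyDxH_vanishing}, and the uniformity in $p$ asserted in Proposition \ref{high-derivative} is what lifts the pointwise identity to a tensorial one.
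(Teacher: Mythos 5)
Your argument is correct and follows essentially the same route as the paper: expand $\nabla_X\nabla_Y\psi_t$ by Leibniz, kill the $\langle\nabla_i\nabla_j\psi_t,\nabla_k\psi_t\rangle$ piece via the odd-multiplicity vanishing in Proposition \ref{high-derivative}, use the $1$-jet asymptotic isometry for the remaining piece, and for the second equality observe that $\{\nabla_{V_j}\psi_t(p)\}$ is an asymptotically orthonormal basis of $T_{\psi_t(p)}\psi_t(M)$. Your remark that the relative bound of Theorem \ref{uni-lin-indep} would be insufficient and that the absolute $O(t)$ decay of \eqref{DyDxH_vanishing} is the needed input is precisely the point the paper relies on (its equation \eqref{2-jet-relation}).
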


\begin{proof}
Since all quantities in $\left( \ref{Levi-Civita}\right) $ are linear in $X$%
, we may assume $X=V_{i}$. In a neighborhood of $p$, we write $Y\left(
x\right) =\Sigma _{k=1}^{n}y^{k}\left( x\right) V_{k}\left( x\right) $.
Since $\nabla _{V_{i}}V_{j}\left( p\right) =\delta _{ij}$, we have 
\begin{equation*}
\nabla _{V_{i}}Y\left( p\right) =\Sigma _{k=1}^{n}\nabla _{i}y^{k}\left(
p\right) V_{k}\left( p\right) ,
\end{equation*}%
where $\partial _{i}y^{k}\left( x\right) =\nabla _{V_{i}}y^{k}\left(
x\right) $. On the other hand, by $\left( \ref{asymp-isom-emb}\right) $ and
the $2$-jet relation $\left( \ref{2-jet-relation}\right) $, we have%
\begin{eqnarray*}
&&\lim_{t\rightarrow 0_{+}}\Sigma _{i=1}^{n}\left\langle \nabla _{i}\nabla
_{Y}\psi _{t},\nabla _{j}\psi _{t}\right\rangle \left( p\right) V_{j}\left(
p\right) \\
&=&\lim_{t\rightarrow 0_{+}}\Sigma _{i=1}^{n}\left\langle \nabla _{i}\left(
\Sigma _{k=1}^{n}y^{k}\nabla _{k}\psi _{t}\right) ,\nabla _{j}\psi
_{t}\right\rangle \left( p\right) V_{j}\left( p\right) \\
&=&\Sigma _{i=1}^{n}\Sigma _{k=1}^{n}\lim_{t\rightarrow 0_{+}}\left( \nabla
_{i}y^{k}\left( p\right) \left\langle \nabla _{k}\psi _{t},\nabla _{j}\psi
_{t}\right\rangle \left( p\right) V_{j}\left( p\right) \right. \\
&&\left. +y^{k}\left( p\right) \left\langle \nabla _{i}\nabla _{k}\psi
_{t},\nabla _{j}\psi _{t}\right\rangle \left( p\right) V_{j}\left( p\right)
\right) \\
&=&\Sigma _{i=1}^{n}\Sigma _{k=1}^{n}\left( \partial _{i}y^{k}\left(
p\right) \delta _{kj}V_{j}\left( p\right) +0\right) \\
&=&\nabla _{i}y^{k}\left( p\right) V_{k}\left( p\right) .
\end{eqnarray*}%
Therefore the first identity%
\begin{equation*}
\nabla _{X}Y\left( p\right) =\lim_{t\rightarrow 0_{+}}\Sigma
_{j=1}^{n}\left\langle \nabla _{X}\nabla _{Y}\psi _{t},\nabla _{j}\psi
_{t}\right\rangle \left( p\right) V_{j}\left( p\right)
\end{equation*}%
is proved. Since $\left\{ V_{j}\right\} _{j=1}^{n}$ is an orthonormal basis
of $T_{p}M$, by $\left( \ref{asymp-isom-emb}\right) $ $\left\{ \nabla
_{V_{j}}\psi _{t}\left( p\right) \right\} _{j=1}^{n}$ is asymptotically an
orthonormal basis of $T_{\psi _{t}\left( p\right) }\psi _{t}\left( M\right) $
as $t\rightarrow 0_{+}$ and we have 
\begin{equation*}
\lim_{t\rightarrow 0_{+}}\left[ \left( \nabla _{X}\nabla _{Y}\psi _{t}\left(
p\right) \right) ^{\intercal }-\Sigma _{j=1}^{n}\left\langle \nabla
_{X}\nabla _{Y}\psi _{t},\nabla _{V_{j}}\psi _{t}\right\rangle \left(
p\right) \nabla _{V_{j}}\psi _{t}\left( p\right) \right] =0.
\end{equation*}%
Applying $d\psi _{t}^{-1}$ on both sides, the second identity is proved.
\end{proof}

Suppose $f:\left( M,g\right) \rightarrow \mathbb{R}^{q}$ is an isometric
embedding, and $h$ is the second fundamental form of $M$. It is well-known
that for vectors $X,Y,Z,W\in TM$, the Riemannian curvature tensor $R$ of $M$
satisfies 
\begin{equation*}
R\left( X,Y,Z,W\right) =\left\langle h\left( X,W\right) ,h\left( Y,Z\right)
\right\rangle -\left\langle h\left( X,Z\right) ,h\left( Y,W\right)
\right\rangle .
\end{equation*}%
This is the \emph{Gauss formula}, which relates the intrinsic curvature $R$
of $\left( M,g\right) $ with its extrinsic geometry in the ambient space. In
our case $\psi _{t}:M\rightarrow l^{2}$ is asymptotically isometric as $%
t\rightarrow 0_{+}$, and by $\left( \ref{2nd-funda-form-Taylor}\right) \,$%
the second fundamental form $h\left( X,Y\right) $ of $\psi _{t}\left(
M\right) \subset l^{2}$ is approximated by $\nabla _{X}\nabla _{Y}\psi _{t}$
as $t\rightarrow 0_{+}$, so it is natural to expect the following
\textquotedblleft asymptotic\textquotedblright\ Gauss formula

\begin{proposition}
\label{Asymp-Gauss} (Riemannian curvature) For any $X,Y,Z,W$ of $TM$, we
have 
\begin{equation}
R\left( X,Y,Z,W\right) =\lim_{t\rightarrow 0+}\left[ \left\langle \nabla
_{X}\nabla _{W}\psi _{t},\nabla _{Y}\nabla _{Z}\psi _{t}\right\rangle
-\left\langle \nabla _{X}\nabla _{Z}\psi _{t},\nabla _{Y}\nabla _{W}\psi
_{t}\right\rangle \right] \text{.}  \label{Gauss-formula}
\end{equation}
\end{proposition}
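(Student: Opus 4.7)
The plan is to view the right-hand side as a limiting form of the classical \emph{Gauss equation} applied to the almost-isometric immersion $\psi_t:M\hookrightarrow l^2$, with the flat metric of $l^2$ playing the role of the Euclidean ambient $\mathbb{R}^3$ in surface theory. For each $t>0$ let $g_t:=\psi_t^*g_0$; by (\ref{asymp-isom-emb}) and the fact that the Minakshisundaram--Pleijel expansion (\ref{U-expansion}) holds in $C^\ell$ for every $\ell$, we have $g_t\to g$ in $C^\infty$, so $\Gamma^k_{ij}(g_t)=\Gamma^k_{ij}(g)+O(t)$ and $R^{g_t}\to R^g$ pointwise as $t\to 0^+$.

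Fix $p\in M$ and pick normal coordinates $(x^1,\dots,x^n)$ of $g$ at $p$. Extend $X,Y,Z,W\in T_pM$ to local vector fields with \emph{constant} coefficients in the frame $\{\partial_i\}$. For fixed $t$, the RHS of (\ref{Gauss-formula}) is not \emph{a priori} tensorial at $p$, but the non-tensorial corrections produced by an extension change (terms of the form $X(f)\langle\nabla_V\psi_t,\nabla_Y\nabla_Z\psi_t\rangle$) are inner products of derivative vectors of odd total degree $1+2=3$, and hence are $O(t)$ by (\ref{DyDxH_vanishing}) of Proposition \ref{high-derivative}. Consequently the limit is genuinely tensorial, and by multilinearity it suffices to verify (\ref{Gauss-formula}) for $X=\partial_i$, $Y=\partial_j$, $Z=\partial_k$, $W=\partial_l$.

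Let $\top_t,\perp_t$ denote the orthogonal projections in $l^2$ onto $T\psi_t(M)$ and its normal bundle, and let $h_t$ be the second fundamental form of $\psi_t(M)\subset l^2$. The Weingarten decomposition reads
\[
\partial_i\partial_j\psi_t\,=\,T_{ij}+h_t(\partial_i,\partial_j),\qquad T_{ij}:=d\psi_t\bigl(\nabla^{g_t}_{\partial_i}\partial_j\bigr).
\]
Since $\top_t\perp\perp_t$ in $l^2$, expanding the RHS of (\ref{Gauss-formula}) produces exactly
\[
\bigl[\langle T_{il},T_{jk}\rangle-\langle T_{ik},T_{jl}\rangle\bigr]+\bigl[\langle h_t(\partial_i,\partial_l),h_t(\partial_j,\partial_k)\rangle-\langle h_t(\partial_i,\partial_k),h_t(\partial_j,\partial_l)\rangle\bigr].
\]
The second bracket equals $R^{g_t}(\partial_i,\partial_j,\partial_k,\partial_l)$ by the classical Gauss equation for a submanifold of the flat space $l^2$, and tends to $R^g(\partial_i,\partial_j,\partial_k,\partial_l)(p)$ as $t\to 0^+$.

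It remains to show the first bracket vanishes in the limit. In our normal coordinates $\Gamma^k_{ij}(g)(p)=0$, so $\Gamma^k_{ij}(g_t)(p)=O(t)$, and hence $\nabla^{g_t}_{\partial_i}\partial_j(p)=O(t)$. Since $|d\psi_t(V)|_{l^2}=|V|_{g_t}$ is uniformly bounded as $t\to 0^+$, we obtain $|T_{ij}(p)|_{l^2}=O(t)$ and therefore $\langle T_{il},T_{jk}\rangle(p)=O(t^2)\to 0$, completing the argument. The main technical obstacle is justifying the sharp rate $\Gamma(g_t)=\Gamma(g)+O(t)$ at $p$: this requires $C^1$ convergence of the metric coefficients $g_t\to g$ in normal coordinates, which is stronger than the zeroth-order statement (\ref{asymp-isom-emb}) but follows from termwise differentiation of the heat-kernel asymptotics, legitimized by the uniform $C^\ell$ bounds in (\ref{U-expansion}).
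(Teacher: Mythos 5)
Your proof is correct and takes a genuinely different route from the paper's. The paper proves the formula by computing the subleading (order $t^{-1}$) term in the expansion of $D_x^{\vec\alpha}D_y^{\vec\beta}H(t,x,y)|_{x=y}$: after the leading constants $A(\cdot,\cdot)$ cancel between the two inner products, the curvature emerges from the coincidence limit $\nabla_{ijkl}r^2|_{x=y}=-\tfrac23(R_{ikjl}+R_{iljk})$ of DeWitt's world function (equation $(\ref{4th-r})$ of Lemma \ref{elimination-tools}), and the Riemann tensor symmetries are then used to assemble the answer. You instead treat each $\psi_t$ as an immersion into flat $l^2$, split $\partial_i\partial_j\psi_t$ into its $T\psi_t(M)$ and $T\psi_t(M)^\perp$ components, note the cross-terms vanish by orthogonality, recognize the normal--normal piece as the classical Gauss equation giving $R^{g_t}$, and dispose of the tangential--tangential piece using $\Gamma(g_t)(p)=O(t)$ in $g$-normal coordinates. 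This is shorter and more conceptually transparent -- it directly exhibits $(\ref{Gauss-formula})$ as a limit of the finite-$t$ Gauss equation rather than deriving it anew from heat-kernel coefficients -- and it bypasses the world-function coincidence limits entirely. The price is that you must invoke $C^2$ convergence of $g_t\to g$ (to get both $\Gamma(g_t)\to\Gamma(g)$ and $R^{g_t}\to R^g$), which you correctly attribute to the $C^\ell$ convergence of the Minakshisundaram--Pleijel expansion, though you only flag the $C^1$ part explicitly; the paper's direct computation avoids this dependence but has to re-derive a Gauss-type cancellation by hand. Both arguments use the same odd-degree vanishing (\ref{DyDxH_vanishing}) to reduce to coordinate frames.
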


\begin{proof}
We only need to prove the above identity when $X,Y,Z$ and $W$ are taken from
the orthonormal basis $\left\{ V_{j}\right\} _{i=1}^{n}$ near $x$ as above,
say they are $V_{i},V_{j},V_{k}$ and $V_{l}$ respectively. For general
vector fields, we can express $X=\Sigma _{j=1}^{n}a^{j}\left( x\right)
V_{j}\left( x\right) $ etc., and plug them in $\left( \ref{Gauss-formula}%
\right) $. Then by the high-jet relations $\left( \ref{asymp-isom-emb}%
\right) $ and $\left( \ref{2-jet-relation}\right) $, as $t\rightarrow 0_{+}$
the terms of the type $\left\langle \nabla _{k}a^{i}\nabla _{i}\psi
_{t},\nabla _{l}b^{j}\nabla _{j}\psi _{t}\right\rangle $ will vanish or
cancel each other, and the terms of the type $\left\langle \nabla
_{k}a^{i}\nabla _{i}\psi _{t},\nabla _{j}\psi _{t}\nabla _{k}\psi
_{t}\right\rangle $ will converge to $0$. For the same reason, switching the
ordering of the covariant derivatives in $\left( \ref{Gauss-formula}\right) $%
, say from $\nabla _{X}\nabla _{W}\psi _{t}$ to $\nabla _{W}\nabla _{X}\psi
_{t}$ etc., will not affect the limit in $\left( \ref{Gauss-formula}\right) $%
.

We first refine our estimate of $\lim_{t\rightarrow 0_{+}}\nabla
_{V_{i}}^{x}\nabla _{V_{l}}^{x}\nabla _{V_{j}}^{y}\nabla _{V_{k}}^{y}H\left(
t,x,y\right) |_{x=y\text{ }}$in Proposition \ref{high-derivative}. We have
proved that%
\begin{equation*}
\lim_{t\rightarrow 0_{+}}\left( 4\pi t\right) ^{\frac{n}{2}}\left( 2t\right)
^{2}\left. D_{x}^{\overrightarrow{\alpha }}D_{y}^{\overrightarrow{\beta }%
}H\left( t,x,y\right) \right\vert _{x=y}=A\left( \overrightarrow{\alpha },%
\overrightarrow{\beta }\right)
\end{equation*}%
for $\overrightarrow{\alpha }=\left( i,l\right) $ and $\overrightarrow{\beta 
}=\left( j,k\right) $, where $A\left( \overrightarrow{\alpha },%
\overrightarrow{\beta }\right) $ is the leading (constant) term of $\left(
4\pi t\right) ^{\frac{n}{2}}\left( 2t\right) ^{2}\left. D_{x}^{%
\overrightarrow{\alpha }}D_{y}^{\overrightarrow{\beta }}H\left( t,x,y\right)
\right\vert _{x=y}$ as $t\rightarrow 0_{+}$, but to prove our theorem we
will need the next order term, which is linear in $t$. \bigskip Recall in
Proposition \ref{high-derivative}, for $\vec{\gamma}=\left( \overrightarrow{%
\alpha },\overrightarrow{\overline{\beta }}\right) $, (where the notation $%
\overrightarrow{\overline{\beta }}$ means the derivative $D^{\overrightarrow{%
\beta }}$ is with respect to the $y$-variables), we write $D^{%
\overrightarrow{\gamma }}H\left( t,x,y\right) =\frac{1}{\left( 4\pi t\right)
^{n/2}}e^{-\frac{r^{2}}{4t}}P_{\overrightarrow{\gamma }}\left( t,x,y\right) $%
, where $P_{\overrightarrow{\gamma }}\left( t,x,y\right) $ is a polynomial
in $D^{\overrightarrow{\mu _{s}}}\left( -\frac{r^{2}\left( x,y\right) }{4t}%
\right) $ and $\left. D^{\overrightarrow{\eta }}U\left( t,x,y\right)
\right\vert _{x=y}$, i.e. each summand is of the form%
\begin{equation*}
\left. \left( \Pi _{s=1}^{h}D^{\overrightarrow{\mu _{j}}}\left( -\frac{%
r^{2}\left( x,y\right) }{4t}\right) \right) \right\vert _{x=y}\cdot \left.
D^{\overrightarrow{\eta }}U\left( t,x,y\right) \right\vert _{x=y}\text{.}
\end{equation*}%
Using Lemma \ref{elimination-tools} and the total degree condition $%
\left\vert \vec{\gamma}\right\vert =4$, by the same method in Proposition %
\ref{high-derivative} to find the leading terms in $P_{\overrightarrow{%
\gamma }}\left( t,x,y\right) $ of order $\left( \frac{1}{t}\right) ^{2}$, we
see the secondary terms of order $\left( \frac{1}{t}\right) $ must have $h=1$%
, $\left\vert \overrightarrow{\mu _{1}}\right\vert =2$, and $\left\vert 
\overrightarrow{\eta }\right\vert =2$, or $h=1$, $\left\vert \overrightarrow{%
\mu _{1}}\right\vert =4$, and $\overrightarrow{\eta }=0$, namely of the form 
\begin{equation*}
\left. D^{\overrightarrow{\mu _{1}}}\left( -\frac{r^{2}\left( x,y\right) }{4t%
}\right) \right\vert _{x=y}\cdot \left. D^{\overrightarrow{\eta }}U\left(
t,x,y\right) \right\vert _{x=y}\text{, with }\overrightarrow{\mu _{1}}\cup 
\overrightarrow{\eta }=\overrightarrow{\gamma }\text{, }\left\vert 
\overrightarrow{\mu _{1}}\right\vert =\left\vert \overrightarrow{\eta }%
\right\vert =2,
\end{equation*}%
or 
\begin{equation*}
\left. D^{\overrightarrow{\gamma }}\left( -\frac{r^{2}\left( x,y\right) }{4t}%
\right) \right\vert _{x=y}\cdot \left. U\left( t,x,y\right) \right\vert
_{x=y}=-\frac{1}{6t}\left( R_{ijkl}\left( x\right) +R_{ilkj}\left( x\right)
\right) ,
\end{equation*}%
where we have used $\left( \ref{4th-r}\right) $. So as $t\rightarrow 0_{+}$,
we have the refined convergence 
\begin{eqnarray}
&&\left( 4\pi t\right) ^{n/2}\left( 2t\right) ^{2}\nabla _{V_{i}}^{x}\nabla
_{V_{l}}^{x}\nabla _{V_{j}}^{y}\nabla _{V_{k}}^{y}H\left( t,x,y\right)
|_{x=y}  \notag \\
&=&A\left( \overrightarrow{\left( i,l\right) },\overrightarrow{\left(
j,k\right) }\right) -t\Sigma _{\left\vert \overrightarrow{\mu _{1}}%
\right\vert =\left\vert \overrightarrow{\eta }\right\vert =2\text{, }%
\overrightarrow{\mu _{1}}\cup \overrightarrow{\eta }=\left( \overrightarrow{%
\left( i,l\right) },\overrightarrow{\left( \overline{j},\overline{k}\right) }%
\right) }\left( \left. D^{\overrightarrow{\mu _{1}}}\left( r^{2}\left(
x,y\right) \right) \right\vert _{x=y}\cdot \left. D^{\overrightarrow{\eta }%
}U\left( t,x,y\right) \right\vert _{x=y}\right)  \notag \\
&&-\frac{2}{3}t\left( R_{ijkl}\left( x\right) +R_{ilkj}\left( x\right)
\right) +O\left( t^{2}\right) .  \label{secondary-expansion}
\end{eqnarray}

For $\overrightarrow{\gamma }=\left( \overrightarrow{\left( i,l\right) },%
\overrightarrow{\left( \overline{j},\overline{k}\right) }\right) $ and $%
\overrightarrow{\gamma }=\left( \overrightarrow{\left( i,k\right) },%
\overrightarrow{\left( \overline{j},\overline{l}\right) }\right) $, by
considering all partitions of $\overrightarrow{\gamma }$ into $%
\overrightarrow{\mu _{1}}\cup \overrightarrow{\eta }$ with $\left\vert 
\overrightarrow{\mu _{1}}\right\vert =2$, and $\left\vert \overrightarrow{%
\eta }\right\vert =2$, and noticing $\nabla _{x}^{a}\nabla _{y}^{b}f\left(
x,y\right) |_{x=y}=\nabla _{y}^{a}\nabla _{x}^{b}f\left( x,y\right) |_{x=y}$
for any indices $a,b$ and functions $f$ symmetric to $x$ and $y$ (here $%
r^{2}\left( x,y\right) $ and $U\left( t,x,y\right) $), we have 
\begin{eqnarray*}
&&\Sigma _{\left\vert \overrightarrow{\mu _{1}}\right\vert =\left\vert 
\overrightarrow{\eta }\right\vert =2\text{, }\overrightarrow{\mu _{1}}\cup 
\overrightarrow{\eta }=\left( \overrightarrow{\left( i,l\right) },%
\overrightarrow{\left( \overline{j},\overline{k}\right) }\right) }\left(
\left. D^{\overrightarrow{\mu _{1}}}\left( r^{2}\left( x,y\right) \right)
\right\vert _{x=y}\cdot \left. D^{\overrightarrow{\eta }}U\left(
t,x,y\right) \right\vert _{x=y}\right)  \\
&=&\Sigma _{\left\vert \overrightarrow{\mu _{1}}\right\vert =\left\vert 
\overrightarrow{\eta }\right\vert =2\text{, }\overrightarrow{\mu _{1}}\cup 
\overrightarrow{\eta }=\left( \overrightarrow{\left( i,k\right) },%
\overrightarrow{\left( \overline{j},\overline{l}\right) }\right) }\left(
\left. D^{\overrightarrow{\mu _{1}}}\left( r^{2}\left( x,y\right) \right)
\right\vert _{x=y}\cdot \left. D^{\overrightarrow{\eta }}U\left(
t,x,y\right) \right\vert _{x=y}\right) .
\end{eqnarray*}%
We also have $A\left( \overrightarrow{\left( i,l\right) },\overrightarrow{%
\left( j,k\right) }\right) =A\left( \overrightarrow{\left( i,k\right) },%
\overrightarrow{\left( j,l\right) }\right) $ by the definition of $A\left( 
\overrightarrow{\alpha },\overrightarrow{\beta }\right) $. Therefore, from $%
\left( \ref{secondary-expansion}\right) $ we have%
\begin{eqnarray*}
&&\lim_{t\rightarrow 0_{+}}\left[ \left\langle \nabla _{V_{i}}\nabla
_{V_{l}}\psi _{t},\nabla _{V_{j}}\nabla _{V_{k}}\psi _{t}\right\rangle
\left( x\right) -\left\langle \nabla _{V_{i}}\nabla _{V_{k}}\psi _{t},\nabla
_{V_{j}}\nabla _{V_{l}}\psi _{t}\right\rangle \left( x\right) \right]  \\
&=&\lim_{t\rightarrow 0_{+}}\frac{1}{2t}\left[ \left( 4\pi t\right)
^{n/2}\left( 2t\right) ^{2}\left( \nabla _{V_{i}}^{x}\nabla
_{V_{l}}^{x}\nabla _{V_{j}}^{y}\nabla _{V_{k}}^{y}H\left( t,x,y\right)
-\nabla _{V_{i}}^{x}\nabla _{V_{k}}^{x}\nabla _{V_{j}}^{y}\nabla
_{V_{l}}^{y}H\left( t,x,y\right) \right) |_{x=y}\right]  \\
&=&\lim_{t\rightarrow 0_{+}}\frac{1}{2t}\frac{2t}{3}\left[ -R_{ijlk}\left(
x\right) -R_{iklj}\left( x\right) +R_{ijkl}\left( x\right) +R_{ilkj}\left(
x\right) +O\left( t^{2}\right) \right]  \\
&=&\frac{1}{3}\lim_{t\rightarrow 0_{+}}\left[ R_{ijkl}\left( x\right)
+R_{ikjl}\left( x\right) +R_{ijkl}\left( x\right) +R_{kjil}\left( x\right)
+O\left( t^{2}\right) \right]  \\
&=&\frac{1}{3}\lim_{t\rightarrow 0_{+}}\left[ 2R_{ijkl}\left( x\right)
+\left( R_{ikjl}\left( x\right) +R_{kjil}\left( x\right) \right) +O\left(
t^{2}\right) \right]  \\
&=&\frac{1}{3}\lim_{t\rightarrow 0_{+}}\left[ 2R_{ijkl}\left( x\right)
+R_{ijkl}\left( x\right) +O\left( t^{2}\right) \right]  \\
&=&R_{ijkl}\left( x\right) ,
\end{eqnarray*}%
where in the last four identities we have used the symmetries of the
Riemannian curvature tensor.
\end{proof}

\begin{remark}
There are several ways to express the curvature tensors by the
eigenfunctions and the heat kernel on $M$, e.g. the Ricci curvature $\left( %
\ref{u0_2nd_deri}\right) $ and scalar curvature $\left( \ref{u0u1}\right) $
in \cite{BBG}\ and \cite{Gil}, but the expression $\left( \ref{Gauss-formula}%
\right) $ has a clear geometric meaning: the Riemannian curvature tensor on $%
\left( M,g\right) $ is computed from the Gauss map of $\psi _{t}\left(
M\right) \rightarrow S^{\infty }$ in $l^{2}$ as $t\rightarrow 0_{+}$. One
more interesting point is that although our construction of the Riemannian
curvature tensor via the embedding $\psi _{t}:M\rightarrow l^{2}$ looks
\textquotedblleft extrinsic\textquotedblright , it is actually
\textquotedblleft intrinsic\textquotedblright , since the embedding $\psi
_{t}$ is constructed by the eigenfunctions of $\Delta _{g}$.
\end{remark}

If we only want to prove the \emph{existence} of the limit in $\left( \ref%
{Gauss-formula}\right) $, we only need the cancellation of the leading terms 
$A\left( \overrightarrow{\left( i,l\right) },\overrightarrow{\left(
j,k\right) }\right) $ and $A\left( \overrightarrow{\left( i,k\right) },%
\overrightarrow{\left( j,l\right) }\right) $ in $\left( \ref%
{secondary-expansion}\right) $, but do not need the symmetries of the
Riemannian curvature tensor. On the other hand, $\left( \ref{Gauss-formula}%
\right) $ \emph{manifests the symmetries} of the Riemannian curvature tensor
in a rather direct way. From $\left( \ref{Gauss-formula}\right) $, it is
easy to see $R\left( X,Y\right) Z=-R\left( Y,X\right) Z$, $\left\langle
R\left( X,Y\right) Z,W\right\rangle =-\left\langle R\left( X,Y\right)
W,Z\right\rangle $, $\left\langle R\left( X,Y\right) Z,W\right\rangle
=\left\langle R\left( Z,W\right) X,Y\right\rangle $, and $R\left( X,Y\right)
Z+R\left( Y,Z\right) X+R\left( Z,X\right) Y=0$ (the first Bianchi identity).
(Note that switching the order of differentiation in the second derivatives,
say from $\nabla _{X}\nabla _{Z}\psi _{t}$ to $\nabla _{Z}\nabla _{X}\psi
_{t}$, will not affect the limit in $\left( \ref{Gauss-formula}\right) $, as
remarked in the beginning of the proof of Proposition \ref{Asymp-Gauss}.)
With a little computation the second Bianchi identity also follows from our
formula of $R$, as follows:

\begin{lemma}
\label{Bianchi-Identity}(Second Bianchi Identity) For any coordinates $%
\left( x^{1},\cdots ,x^{n}\right) $ near $x$, the Riemannian curvature
tensor $R$ satisfies 
\begin{equation}
\frac{\partial }{\partial x^{h}}R_{klij}+\frac{\partial }{\partial x^{k}}%
R_{lhij}+\frac{\partial }{\partial x^{l}}R_{hkij}=0.  \label{Bianchi}
\end{equation}
\end{lemma}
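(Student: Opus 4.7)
The plan is to differentiate the asymptotic Gauss formula of Proposition \ref{Asymp-Gauss} with respect to $x^{h}$ and then take the cyclic sum over $(h,k,l)$. Writing
\[
F^{t}_{klij}(x) := \langle \nabla _{k}\nabla _{j}\psi _{t},\nabla _{l}\nabla _{i}\psi _{t}\rangle (x)-\langle \nabla _{k}\nabla _{i}\psi _{t},\nabla _{l}\nabla _{j}\psi _{t}\rangle (x),
\]
Proposition \ref{Asymp-Gauss} says $R_{klij}(x)=\lim _{t\rightarrow 0_{+}}F^{t}_{klij}(x)$. The first step is to interchange the limit with $\partial /\partial x^{h}$; this is legitimate because the heat-kernel expansion $\left( \ref{U-expansion}\right) $ converges in $C^{l}$ for every $l$, so the very same methods that produced the leading and next-to-leading asymptotics in the proof of Proposition \ref{Asymp-Gauss} also control the $x$-derivatives uniformly in $t$.

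Once the interchange is justified, the Leibniz rule expands $\partial _{h}F^{t}_{klij}$ into four inner products of the schematic form $\pm \langle \nabla _{a}\nabla _{b}\nabla _{c}\psi _{t},\nabla _{d}\nabla _{e}\psi _{t}\rangle $, and the cyclic sum produces twelve such terms. The crucial observation is that $\nabla _{a}\nabla _{b}\nabla _{c}\psi _{t}$ and $\nabla _{d}\nabla _{e}\psi _{t}$ are iterated partial derivatives of the $l^{2}$-valued map $\psi _{t}$ in coordinate directions, and hence are totally symmetric in their subscripts; so each of the twelve terms is effectively labelled by an unordered triple and an unordered pair of indices.

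A direct inspection shows the twelve terms match up into six cancelling pairs. For example, the $+\langle \nabla _{h}\nabla _{k}\nabla _{j}\psi _{t},\nabla _{l}\nabla _{i}\psi _{t}\rangle $ coming from $\partial _{h}R_{klij}$ is killed by the $-\langle \nabla _{l}\nabla _{i}\psi _{t},\nabla _{k}\nabla _{h}\nabla _{j}\psi _{t}\rangle $ inside $\partial _{k}R_{lhij}$, because $\nabla _{h}\nabla _{k}\nabla _{j}\psi _{t}=\nabla _{k}\nabla _{h}\nabla _{j}\psi _{t}$; the other five pairs match in exactly the same way. Passing to the limit gives $\left( \ref{Bianchi}\right) $. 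The main obstacle is simply the combinatorial bookkeeping of the twelve terms and their signs; no further curvature identities, and nothing beyond the symmetry of iterated coordinate partials, is required.
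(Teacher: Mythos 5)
Your proposal is essentially the paper's own proof: both differentiate the asymptotic Gauss formula, expand by the Leibniz rule into twelve terms, and observe that the commutativity of iterated coordinate partials of the $l^{2}$-valued map $\psi _{t}$ forces the cyclic sum to cancel in six pairs. The only cosmetic differences are that the paper explicitly reduces to normal coordinates at $x$ before starting, and addresses the convergence of the individual third-order inner products via a $\tfrac{1}{2t}$-rescaling and Proposition \ref{high-derivative} rather than by your slightly more informal appeal to $C^{l}$-convergence of the heat-kernel expansion.
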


\begin{proof}
Since all terms on the left side are tensors, we only need to prove the
identity when $\left( x^{1},\cdots ,x^{n}\right) $ is a normal coordinate
near $x$. From $\left( \ref{Gauss-formula}\right) $ we have%
\begin{eqnarray*}
\frac{\partial }{\partial x^{h}}R_{klij}\left( x\right) &=&\nabla
_{h}\lim_{t\rightarrow 0+}\left[ \left\langle \nabla _{k}\nabla _{j}\psi
_{t},\nabla _{l}\nabla _{i}\psi _{t}\right\rangle -\left\langle \nabla
_{k}\nabla _{i}\psi _{t},\nabla _{l}\nabla _{j}\psi _{t}\right\rangle \right]
\left( x\right) \\
&=&\lim_{t\rightarrow 0+}\left[ \left\langle \nabla _{hkj}\psi _{t},\nabla
_{li}\psi _{t}\right\rangle +\left\langle \nabla _{hli}\psi _{t},\nabla
_{kj}\psi _{t}\right\rangle \right. \\
&&\left. -\left\langle \nabla _{hki}\psi _{t},\nabla _{lj}\psi
_{t}\right\rangle -\left\langle \nabla _{hlj}\psi _{t},\nabla _{ki}\psi
_{t}\right\rangle \right] \left( x\right) ,
\end{eqnarray*}%
where $\nabla _{hkj}:=\nabla _{h}\nabla _{k}\nabla _{j}$. Similarly%
\begin{eqnarray*}
\frac{\partial }{\partial x^{k}}R_{lhij}\left( x\right)
&=&\lim_{t\rightarrow 0+}\left[ \left\langle \nabla _{klj}\psi _{t},\nabla
_{hi}\psi _{t}\right\rangle +\left\langle \nabla _{khi}\psi _{t},\nabla
_{lj}\psi _{t}\right\rangle \right. \\
&&\left. -\left\langle \nabla _{kli}\psi _{t},\nabla _{hj}\psi
_{t}\right\rangle -\left\langle \nabla _{khj}\psi _{t},\nabla _{li}\psi
_{t}\right\rangle \right] \left( x\right) , \\
\frac{\partial }{\partial x^{l}}R_{hkij}\left( x\right)
&=&\lim_{t\rightarrow 0+}\left[ \left\langle \nabla _{lhj}\psi _{t},\nabla
_{ki}\psi _{t}\right\rangle +\left\langle \nabla _{lki}\psi _{t},\nabla
_{hj}\psi _{t}\right\rangle \right. \\
&&\left. -\left\langle \nabla _{lhi}\psi _{t},\nabla _{kj}\psi
_{t}\right\rangle -\left\langle \nabla _{lkj}\psi _{t},\nabla _{hi}\psi
_{t}\right\rangle \right] \left( x\right) .
\end{eqnarray*}%
Adding the $3$ identities together, $\left( \ref{Bianchi}\right) $ follows.
There is a convergence issue of limits like $\lim_{t\rightarrow
0+}\left\langle \nabla _{hkj}\psi _{t},\nabla _{li}\psi _{t}\right\rangle
=O\left( \frac{1}{t}\right) $ from Proposition \ref{high-derivative}, but
writing%
\begin{equation*}
\frac{\partial }{\partial x^{h}}R_{klij}+\frac{\partial }{\partial x^{k}}%
R_{lhij}+\frac{\partial }{\partial x^{l}}R_{hkij}=\frac{1}{2t}\left[
2t\left( \frac{\partial }{\partial x^{h}}R_{klij}+\frac{\partial }{\partial
x^{k}}R_{lhij}+\frac{\partial }{\partial x^{l}}R_{hkij}\right) \right]
\end{equation*}%
and applying Proposition \ref{high-derivative} easily fix that.
\end{proof}

\subsection{Approximation of Riemannian geometry by eigenfunctions}

For the embedding map $\psi _{t}:\left( M,g\right) \rightarrow l^{2}$, we
can truncate the infinite dimensional $l^{2}$ to $\mathbb{R}^{q}$ by taking
its first $q$ components, and denote the resulted map by $\psi _{t}^{q}$.
The following truncation lemma in \cite{WZ} gives the bound of $q$ that can
preserve most good properties of $\psi _{t}$. It was proved by the Weyl's
asymptotic formula of eigenvalues (see \cite{Ch}) and $C^{0}$-estimates of
eigenfunctions (and their derivatives) on compact Riemannian manifolds.

\begin{lemma}
\label{truncation}(\cite{WZ} Theorem 6 and Remark 7)\label{isom-truncation}
Let $\rho >0$ be a fixed small constant. For integers $q=q\left( t\right) $
of the order $t^{-\left( \frac{n}{2}+\rho \right) }$ as $t\rightarrow 0_{+}$%
, the truncated heat kernel mapping 
\begin{equation*}
\psi _{t}^{q\left( t\right) }:\left( M,g\right) \rightarrow \mathbb{R}%
^{q\left( t\right) }
\end{equation*}%
still satisfies the asymptote%
\begin{equation}
\left( \psi _{t}^{q\left( t\right) }\right) ^{\ast }g_{0}=g+\frac{t}{3}%
\left( \frac{1}{2}S_{g}\cdot g-\text{Ric}_{g}\right) +O\left( t^{2}\right) ,
\label{truncate-isom-emb}
\end{equation}%
where $g_{0}$ is the standard metric in $l^{2}$. The truncation order $%
q\left( t\right) \sim t^{-\left( \frac{n}{2}+\rho \right) }$ is also valid
to preserve all high-jet relations in Theorem \ref{high-jet-relation} for
higher derivatives of $\psi _{t}$.
\end{lemma}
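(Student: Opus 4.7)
The plan is to reduce \eqref{truncate-isom-emb} for the truncation $\psi_t^{q(t)}$ to the known asymptote \eqref{asymp-isom-emb} for the full $\psi_t$, by showing that the discarded tail is superpolynomially small in $t$. First I would write
\[
\psi_t(x) - \psi_t^{q(t)}(x) = \sqrt{2}(4\pi)^{n/4} t^{(n+2)/4} \sum_{j > q(t)} e^{-\lambda_j t/2} \phi_j(x)\, e_j,
\]
so that
\[
(\psi_t)^{\ast} g_0 - (\psi_t^{q(t)})^{\ast} g_0 = 2(4\pi)^{n/2}\, t^{(n+2)/2} \sum_{j > q(t)} e^{-\lambda_j t}\, d\phi_j \otimes d\phi_j.
\]
The goal is to show the right-hand side is $o(t^N)$ for every $N$, which is far better than the $O(t^2)$ required.

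The two analytic inputs flagged in the lemma are (i) Weyl's law $\lambda_j \sim c_n(M)\, j^{2/n}$, and (ii) Hörmander-type uniform bounds $\|\nabla^k \phi_j\|_\infty \leq C_k\, \lambda_j^{(n-1)/4 + k/2}$ for eigenfunctions and their covariant derivatives. Since $q(t) \sim t^{-(n/2+\rho)}$, Weyl's law gives $\lambda_j \geq c\, j^{2/n} \geq c\, t^{-1 - 2\rho/n}$ for all $j > q(t)$. I would then split $e^{-\lambda_j t} = e^{-\lambda_j t/2} \cdot e^{-\lambda_j t/2}$, using one factor to harvest the uniform superpolynomial decay $\exp(-\tfrac{c}{2}\, t^{-2\rho/n})$ on the entire tail, and using the other factor together with the polynomial Hörmander bounds to control
\[
\sum_{j > q(t)} e^{-\lambda_j t/2}\, \lambda_j^{(n-1)/2 + 1} \leq C\, t^{-(n+1)/2}
\]
uniformly in $x$ (a standard heat-trace estimate, comparable to the on-diagonal heat kernel). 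Multiplying by the prefactor $t^{(n+2)/2}$ yields a tail bounded by $C\, t^{1/2} \exp(-\tfrac{c}{2}\, t^{-2\rho/n})$, which is $o(t^N)$ for every $N$. This establishes \eqref{truncate-isom-emb}.

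For the high-jet part, the same argument applies verbatim to $D^{\overrightarrow{\alpha}}\Phi_t - D^{\overrightarrow{\alpha}}\Phi_t^{q(t)}$ and to the corresponding inner products: each additional derivative contributes at most a factor $\lambda_j^{1/2}$ per Hörmander, i.e.\ a power of $t^{-1/2}$ in the polynomial part, which is again swallowed by $\exp(-c\, t^{-2\rho/n})$. Hence every inner product $\langle D^{\overrightarrow{\alpha}}\Phi_t, D^{\overrightarrow{\beta}}\Phi_t\rangle$ used in Theorem \ref{high-jet-relation} is preserved under the truncation, up to errors negligible compared to the leading powers identified in Proposition \ref{jet-relation}, so all limiting angles and length asymptotics survive unchanged. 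The main obstacle in executing this plan rigorously is producing clean uniform-in-$x$ versions of the derivative bounds for $\phi_j$ and carefully tracking exponents so that the superpolynomial factor $\exp(-c\, t^{-2\rho/n})$ genuinely dominates every polynomial in $t^{-1}$ generated from jet order, dimension, and heat-trace size; once this bookkeeping is in place, the convergence is uniform on $M$ as required.
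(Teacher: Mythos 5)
Your proposal is correct and follows exactly the route the paper attributes to the proof in [WZ]: Weyl's asymptotic law plus uniform $C^0$-type bounds on eigenfunctions and their derivatives, with the key observation that $q(t)\sim t^{-(n/2+\rho)}$ forces $\lambda_j t\gtrsim t^{-2\rho/n}\to\infty$ on the discarded tail, so the factor $e^{-\lambda_j t/2}$ contributes superpolynomial decay that swallows every polynomial in $t^{-1}$ coming from the Hörmander/Sogge-type bounds, the heat-trace sum, and the normalization prefactor. (The paper itself does not reproduce the proof — it cites [WZ] Theorem 6 and Remark 7 — so your sketch is a faithful reconstruction of that argument rather than a competing one.) The only quibble is a bookkeeping slip: $\sum_j e^{-\lambda_j t/2}\lambda_j^{(n+1)/2}$ scales like $t^{-(2n+1)/2}$, not $t^{-(n+1)/2}$, but as you note the precise polynomial exponent is immaterial since $\exp(-c\,t^{-2\rho/n})$ dominates any fixed power of $t^{-1}$; the same remark covers the extra $\lambda_j^{1/2}$ per derivative in the high-jet part.
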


Using the above truncation lemma, we can approximate the metric $g$, the
Levi-Civita connection $\nabla $, and the Riemannian curvature $R$ by $%
q\left( t\right) \sim t^{-\frac{n}{2}-\rho }$ eigenfunctions by the
expressions $\left( \ref{asymp-isom-emb}\right) $, $\left( \ref{Levi-Civita}%
\right) $, and $\left( \ref{Gauss-formula}\right) $, respectively, with the
error of certain orders of $t$. This suggests the possibility of
approximating the Riemannian geometry of $\left( M,g\right) $ by finitely
many eigenfunctions. Indeed, if the Schwartz function $w$ is chosen with
compact support, Theorem 1.6 of \cite{Ni} gives one of such approximations. 

It will be useful to get a precise control of the coefficients in the above
remainder $O\left( t^{2}\right) $ and their higher derivative counterparts,
so we know how large $q\left( t\right) $ (i.e. how small $t$) should be in
order to yield a given accuracy.

\section{Algebraic structures in the $\infty $-jet space of $\Phi _{t}\left(
x\right) \label{reformulations}$}

In Section \ref{high-jet relations} we have obtained the angle and length
relations of all derivative vectors $D^{\overrightarrow{\alpha }}\Phi
_{t}\left( x\right) $ as $t\rightarrow 0_{+}$. These relations are
independent on $g$, $x$ and the choice of orthonormal basis, so it is
worthwhile to investigate them more closely. The relations are expressed by
the constants $A\left( \overrightarrow{\alpha },\overrightarrow{\beta }%
\right) $ and $B\left( \overrightarrow{\alpha },\overrightarrow{\beta }%
\right) $. In this section we explore more relations between these
constants, or in other words study the $\infty $-jet space of $\Phi
_{t}\left( x\right) $ as $t\rightarrow 0_{+}$.

\subsection{\protect\bigskip Inductive relations of $A\left( \protect%
\overrightarrow{\protect\alpha },\protect\overrightarrow{\protect\beta }%
\right) $ and $B\left( \protect\overrightarrow{\protect\alpha },\protect%
\overrightarrow{\protect\beta }\right) $}

It turns out the constants $A\left( \overrightarrow{\alpha },\overrightarrow{%
\beta }\right) $ and $B\left( \overrightarrow{\alpha },\overrightarrow{\beta 
}\right) $ involved in the high-jet relations in Theorem \ref%
{high-jet-relation} can be defined inductively. Suppose $j_{r}$ $\in \left\{
1,2,\cdots n\right\} $, but not necessarily in $\mathcal{S=}\left\{ 
\overrightarrow{\alpha }\right\} \amalg \left\{ \overrightarrow{\beta }%
\right\} $. Let $\overrightarrow{\alpha }_{+}=\left( \overrightarrow{\alpha }%
,j_{r}\right) $, $\overrightarrow{\beta }_{+}=\left( \overrightarrow{\beta }%
,j_{r}\right) $, and $\overrightarrow{\alpha _{+k}}=\left( \overrightarrow{%
\alpha },\underset{k}{\underbrace{j_{r},\cdots ,j_{r}}}\right) $. If $%
j_{r}\in \overrightarrow{\beta }$, we let $\overrightarrow{\beta }_{-}=%
\overrightarrow{\beta }\backslash \left\{ j_{r}\right\} $. By definition it
is easy to check $A\left( \overrightarrow{\alpha },\overrightarrow{\beta }%
\right) $ satisfies the following properties: 
\begin{eqnarray}
\text{Normalization}\text{: } &&A\left( \overrightarrow{\varnothing },%
\overrightarrow{\varnothing }\right) =1,\text{ (where }\varnothing \text{ is
the empty set)}  \label{unit} \\
\text{Symmetry}\text{: } &&A\left( \overrightarrow{\alpha },\overrightarrow{%
\beta }\right) =A\left( \overrightarrow{\beta },\overrightarrow{\alpha }%
\right) ,  \label{symm} \\
\text{Leibniz rule}\text{: } &&A\left( \overrightarrow{\alpha }_{+},%
\overrightarrow{\beta }_{-}\right) =-A\left( \overrightarrow{\alpha },%
\overrightarrow{\beta }\right) ,  \label{switch} \\
\text{Adding index}\text{: } &&A\left( \overrightarrow{\alpha }_{+},%
\overrightarrow{\beta }_{+}\right) =A\left( \overrightarrow{\alpha },%
\overrightarrow{\beta }\right) \left( a_{r}+b_{r}+1\right) .  \label{adding}
\end{eqnarray}%
These properties completely characterize $A\left( \overrightarrow{\alpha },%
\overrightarrow{\beta }\right) $. Indeed, to obtain a general $A\left( 
\overrightarrow{\alpha },\overrightarrow{\beta }\right) $, we can start from 
$\overrightarrow{\alpha }=\overrightarrow{\beta }=\overrightarrow{%
\varnothing }$, use $\left( \ref{adding}\right) $ to obtain $A\left( 
\overrightarrow{\varnothing _{+k}},\overrightarrow{\varnothing _{+k}}\right) 
$, and then use $\left( \ref{switch}\right) $ to obtain $A\left( 
\overrightarrow{\varnothing _{+\left( k+l\right) }},\overrightarrow{%
\varnothing _{+\left( k-l\right) }}\right) $ for $0\leq l\leq k$. Then we
can use $\left( \text{\ref{adding}}\right) $ to add different indices $%
j^{\prime }$ to $\left( \overrightarrow{\alpha },\overrightarrow{\beta }%
\right) $ to get all $A\left( \overrightarrow{\alpha },\overrightarrow{\beta 
}\right) $ (note if $j^{\prime }\notin \left\{ \overrightarrow{\alpha }%
\right\} \amalg \left\{ \overrightarrow{\beta }\right\} $, then $A\left( 
\overrightarrow{\alpha _{+}},\overrightarrow{\beta _{+}}\right) =A\left( 
\overrightarrow{\alpha },\overrightarrow{\beta }\right) $ by $\left( \text{%
\ref{adding}}\right) $). Similarly, $B\left( \overrightarrow{\alpha },%
\overrightarrow{\beta }\right) $ is characterized by%
\begin{eqnarray}
B\left( \overrightarrow{\varnothing },\overrightarrow{\varnothing }\right)
&=&1,\text{ }B\left( \overrightarrow{\alpha },\overrightarrow{\beta }\right)
=B\left( \overrightarrow{\beta },\overrightarrow{\alpha }\right) ,  \notag \\
B\left( \overrightarrow{\alpha }_{+},\overrightarrow{\beta }_{-}\right)
&=&-B\left( \overrightarrow{\alpha },\overrightarrow{\beta }\right) \left( 
\frac{2b_{r}-1}{2a_{r}+1}\right) ^{1/2},  \notag \\
B\left( \overrightarrow{\alpha }_{+},\overrightarrow{\beta }_{+}\right)
&=&B\left( \overrightarrow{\alpha },\overrightarrow{\beta }\right) \frac{%
a_{r}+b_{r}+1}{\left[ \left( 2a_{r}+1\right) \left( 2b_{r}+1\right) \right]
^{1/2}}.  \label{B-adding-index}
\end{eqnarray}

\subsection{Stabilization property from repeated differentials\label%
{stablization}}

For some index $j_{r}$ $\in \left\{ 1,2,\cdots n\right\} $ (but not
necessarily in $\left\{ \overrightarrow{\alpha }\right\} \amalg \left\{ 
\overrightarrow{\beta }\right\} $), if we apply $\partial _{j_{r}}$
repeatedly to $D^{\overrightarrow{\alpha }}\Phi _{t}\left( x\right) $ and $%
D^{\overrightarrow{\beta }}\Phi _{t}\left( x\right) $, then the the resulted
derivative vectors become \textquotedblleft more and more
collinear\textquotedblright , and the angle between them has certain
stabilization property. More precisely, we have the following

\begin{proposition}
\label{repeated-diffrential}$B\left( \overrightarrow{\alpha }_{+},%
\overrightarrow{\beta }_{+}\right) $ and $B\left( \overrightarrow{\alpha },%
\overrightarrow{\beta }\right) $ always have the same sign, and%
\begin{equation}
\left\vert B\left( \overrightarrow{\alpha }_{+},\overrightarrow{\beta }%
_{+}\right) \right\vert \geq \left\vert B\left( \overrightarrow{\alpha },%
\overrightarrow{\beta }\right) \right\vert \,,  \label{monotone-angle}
\end{equation}%
where the equality holds if and only if $j_{r}$ appears with the same
multiplicity in $\overrightarrow{\alpha }$ and $\overrightarrow{\beta }$.
Further more we have the following limit: if we let the multi-index $%
\overrightarrow{\alpha _{+k}}=\left( \overrightarrow{\alpha },\underset{k}{%
\underbrace{j_{r},\cdots ,j_{r}}}\right) $, similarly for $\overrightarrow{%
\beta _{+k}}$, then 
\begin{equation}
\lim_{k\rightarrow \infty }B\left( \overrightarrow{\alpha _{+k}},%
\overrightarrow{\beta _{+k}}\right) :=B\left( \overrightarrow{\alpha }_{\ast
},\overrightarrow{\beta }_{\ast }\right) \,,  \label{B_jr_ab}
\end{equation}%
where the multi-indices $\overrightarrow{\alpha _{\ast }}$ and $%
\overrightarrow{\beta }_{\ast }$ are obtained from deleting all $j_{r}$
indices in $\overrightarrow{\alpha }$ and $\overrightarrow{\beta }$
respectively. We also have%
\begin{equation}
\lim_{k\rightarrow \infty }B\left( \overrightarrow{\alpha },\overrightarrow{%
\beta _{+k}}\right) :=0.  \label{B_jr_b}
\end{equation}
\end{proposition}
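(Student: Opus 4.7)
The plan is to reduce every assertion to the single-step recursion (\ref{B-adding-index}),
\[
B(\overrightarrow{\alpha}_{+},\overrightarrow{\beta}_{+}) \;=\; B(\overrightarrow{\alpha},\overrightarrow{\beta})\,\cdot\,\frac{a_r+b_r+1}{\sqrt{(2a_r+1)(2b_r+1)}}.
\]
The multiplier is manifestly positive, which immediately gives the common-sign assertion. By the AM--GM inequality $\sqrt{(2a_r+1)(2b_r+1)} \le a_r+b_r+1$, with equality iff $2a_r+1 = 2b_r+1$, i.e.\ $a_r = b_r$, yielding the monotonicity (\ref{monotone-angle}) together with its equality condition.

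For the stabilization limit (\ref{B_jr_ab}), iterating the recursion gives
\[
B(\overrightarrow{\alpha_{+k}},\overrightarrow{\beta_{+k}}) \;=\; B(\overrightarrow{\alpha},\overrightarrow{\beta})\prod_{j=0}^{k-1}\frac{M_j}{\sqrt{M_j^{2}-d^{2}}},\qquad M_j := 2j+a_r+b_r+1,\ d := a_r-b_r.
\]
Each factor's logarithm equals $-\tfrac12\log(1-d^{2}/M_j^{2}) = d^{2}/(2M_j^{2}) + O(M_j^{-4})$, and since $\sum_j M_j^{-2}$ converges, the infinite product has a finite positive value, so $\lim_k B(\overrightarrow{\alpha_{+k}},\overrightarrow{\beta_{+k}})$ exists. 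To identify the value as $B(\overrightarrow{\alpha}_{\ast},\overrightarrow{\beta}_{\ast})$, I will isolate the $j_r$-contribution from Definition \ref{constants} and apply Stirling's formula $(2m)!/m! \sim \sqrt{2}\cdot 4^{m}m^{m}e^{-m}$ to the three shifted arguments $m=\sigma_r+k,\ a_r+k,\ b_r+k$; the dominant powers cancel and the remaining ratio
\[
\frac{(2\sigma_r+2k)!/(\sigma_r+k)!}{\sqrt{[(2a_r+2k)!/(a_r+k)!]\cdot [(2b_r+2k)!/(b_r+k)!]}}
\]
tends to $1$. The factors for indices $s\neq r$ are $k$-independent and already equal those of $B(\overrightarrow{\alpha}_{\ast},\overrightarrow{\beta}_{\ast})$, while the overall sign $(-1)^{(|\alpha|-|\beta|)/2}$ is stable under adding the same count of $j_r$ to both sides.

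The decay (\ref{B_jr_b}) follows in the same spirit. The parity clause in Definition \ref{constants} forces $B(\overrightarrow{\alpha},\overrightarrow{\beta_{+k}}) = 0$ whenever $a_r+b_r+k$ is odd, so only the even-parity subsequence is nontrivial. On that subsequence the $j_r$-factor reads
\[
\frac{(a_r+b_r+k)!\big/\bigl((a_r+b_r+k)/2\bigr)!}{\sqrt{\bigl[(2a_r)!/a_r!\bigr]\cdot\bigl[(2b_r+2k)!/(b_r+k)!\bigr]}},
\]
and a direct Stirling comparison shows the denominator exceeds the numerator by a multiplicative factor of order $2^{k/2}$, so the whole expression is $O(2^{-k/2})$ and the limit is $0$.

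The main technical obstacle is the Stirling analysis in the identification step of (\ref{B_jr_ab}): one must arrange that the dominant $4^{m}m^{m}e^{-m}$-growth cancels exactly across the three shifted factors, leaving only a subleading remainder that tends to $1$ (at rate $(a_r-b_r)^{2}/(8k) + O(k^{-2})$ in the logarithm). Everything else is elementary: AM--GM for the monotonicity, positivity of the multiplier for the common-sign claim, and the parity clause of Definition \ref{constants} for the vanishing subsequences.
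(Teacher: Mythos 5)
Your proof mirrors the paper's for the common-sign and monotonicity claims: both of you reduce to the one-step recursion $\left( \ref{B-adding-index}\right) $, note the multiplier is positive, and apply AM--GM to see it is $\geq 1$ with equality iff $a_{r}=b_{r}$ (the paper writes the multiplier as $\sqrt{1+(a_{r}-b_{r})^{2}/((2a_{r}+1)(2b_{r}+1))}$, which is the same inequality in disguise). No difference there.

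For the two limits $\left( \ref{B_jr_ab}\right) $ and $\left( \ref{B_jr_b}\right) $ the paper avoids Stirling entirely. It first rewrites the $j_{r}$-contribution to $B\left( \overrightarrow{\alpha },\overrightarrow{\beta }\right) $ in the closed telescoping form
\[
\left[ \frac{\left( 2\sigma _{r}-1\right) \left( 2\sigma _{r}-3\right)\cdots \left( 2\sigma _{r}-2l_{r}+1\right) }{\left( 2\sigma _{r}+1\right)\left( 2\sigma _{r}+3\right) \cdots \left( 2\sigma _{r}+2l_{r}-1\right) }\right] ^{1/2},\qquad l_{r}=\tfrac{\left\vert a_{r}-b_{r}\right\vert }{2},
\]
a product of the \emph{fixed} number $l_{r}$ of factors. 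Under $\sigma _{r}\mapsto \sigma _{r}+k$ with $l_{r}$ fixed each factor tends to $1$, giving $\left( \ref{B_jr_ab}\right) $; under $\sigma _{r}\mapsto \sigma _{r}+k$ and $l_{r}\mapsto l_{r}+k$ a one-line bound by the extreme factor gives $\left( \ref{B_jr_b}\right) $. Your Stirling route is also valid (the $4^{m}m^{m}e^{-m}$ growth cancels across the three shifted arguments because $\sigma _{r}+k=\tfrac{1}{2}[(a_{r}+k)+(b_{r}+k)]$, and the residual in the logarithm is $-\tfrac{(a_{r}-b_{r})^{2}}{8k}+O(k^{-2})$), but it is heavier machinery, and your preliminary step showing the infinite product converges becomes logically superfluous once you commit to identifying the limit by Stirling.

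There is one point you and the paper both gloss over, and it actually makes $\left( \ref{B_jr_ab}\right) $ false as written whenever $\left\vert a_{r}-b_{r}\right\vert /2$ is odd. The sign of $B\left( \overrightarrow{\alpha _{+k}},\overrightarrow{\beta _{+k}}\right) $ is $(-1)^{(\left\vert \overrightarrow{\alpha }\right\vert -\left\vert \overrightarrow{\beta }\right\vert )/2}$, constant in $k$ — you correctly note this — but the sign of $B\left( \overrightarrow{\alpha }_{\ast },\overrightarrow{\beta }_{\ast }\right) $ is $(-1)^{(\left\vert \overrightarrow{\alpha }\right\vert -a_{r}-\left\vert \overrightarrow{\beta }\right\vert +b_{r})/2}$, which differs from the first by $(-1)^{(b_{r}-a_{r})/2}$. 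Concretely, take $\overrightarrow{\alpha }=\varnothing $ and $\overrightarrow{\beta }=(j_{r},j_{r})$: one computes $B\left( \overrightarrow{\alpha _{+k}},\overrightarrow{\beta _{+k}}\right) =-\sqrt{(2k+1)/(2k+3)}\rightarrow -1$, whereas $B\left( \overrightarrow{\alpha }_{\ast },\overrightarrow{\beta }_{\ast }\right) =B\left( \varnothing ,\varnothing \right) =1$. So your closing remark that "the overall sign is stable" is correct for the left-hand side but does not transfer to $B\left( \overrightarrow{\alpha }_{\ast },\overrightarrow{\beta }_{\ast }\right) $; the identity should carry an extra factor $(-1)^{\left\vert a_{r}-b_{r}\right\vert /2}$, and a careful proof would surface this rather than assert the stated equality.
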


\begin{proof}
Since $\left\vert \overrightarrow{\alpha }_{+}\right\vert -\left\vert 
\overrightarrow{\beta }_{+}\right\vert =\left\vert \overrightarrow{\alpha }%
\right\vert -\left\vert \overrightarrow{\beta }\right\vert $, by definition $%
B\left( \overrightarrow{\alpha }_{+},\overrightarrow{\beta }_{+}\right) $
and $B\left( \overrightarrow{\alpha },\overrightarrow{\beta }\right) $
always have the same sign. Let us assume $B\left( \overrightarrow{\alpha },%
\overrightarrow{\beta }\right) \geq 0$ (the $B\left( \overrightarrow{\alpha }%
,\overrightarrow{\beta }\right) \leq 0$ case is similar). From $\left( \ref%
{B-adding-index}\right) $ we have%
\begin{eqnarray*}
B\left( \overrightarrow{\alpha }_{+},\overrightarrow{\beta }_{+}\right)
&=&B\left( \overrightarrow{\alpha },\overrightarrow{\beta }\right) \cdot
\left( \frac{\left( a_{r}+b_{r}+1\right) ^{2}}{\left( 2a_{r}+1\right) \left(
2b_{r}+1\right) }\right) ^{1/2} \\
&=&B\left( \overrightarrow{\alpha },\overrightarrow{\beta }\right) \cdot
\left( 1+\frac{\left( a_{r}-b_{r}\right) ^{2}}{\left( 2a_{r}+1\right) \left(
2b_{r}+1\right) }\right) ^{1/2} \\
&\geq &B\left( \overrightarrow{\alpha },\overrightarrow{\beta }\right) \text{%
,}
\end{eqnarray*}%
and the equality holds if and only if $a_{r}=b_{r}$. Recall 
\begin{equation*}
B\left( \overrightarrow{\alpha },\overrightarrow{\beta }\right) =\left(
-1\right) ^{\frac{\left\vert \overrightarrow{\alpha }\right\vert -\left\vert 
\overrightarrow{\beta }\right\vert }{2}}{\LARGE \Pi }_{r=1}^{s}\left. \frac{%
\left( 2\sigma _{r}\right) !}{\sigma _{r}!}\right/ \left[ \frac{\left(
2a_{r}\right) !}{a_{r}!}\frac{\left( 2b_{r}\right) !}{b_{r}!}\right] ^{1/2}.
\end{equation*}%
Since $\sigma _{r}=\frac{a_{r}+b_{r}}{2}$, if we let $l_{r}=\frac{b_{r}-a_{r}%
}{2}\geq 0$ (assuming $b_{r}\geq a_{r}$), then%
\begin{equation*}
\frac{a_{r}!}{\sigma _{r}!}\frac{b_{r}!}{\sigma _{r}!}=\frac{\left( \sigma
_{r}+1\right) \cdots \left( \sigma _{r}+l_{r}\right) }{\left( \sigma
_{r}-1\right) \cdots \left( \sigma _{r}-l_{r}\right) },\text{ and }\frac{%
\left( 2a_{r}\right) !}{\left( 2\sigma _{r}\right) !}\frac{\left(
2b_{r}\right) !}{\left( 2\sigma _{r}\right) !}=\frac{\left( 2\sigma
_{r}+1\right) \cdots \left( 2\sigma _{r}+2l_{r}\right) }{\left( 2\sigma
_{r}-1\right) \cdots \left( 2\sigma _{r}-2l_{r}\right) }.\text{ }
\end{equation*}%
So the $r$-th factor in $B\left( \overrightarrow{\alpha },\overrightarrow{%
\beta }\right) $ is%
\begin{eqnarray}
&&\left. \frac{\left( 2\sigma _{r}\right) !}{\sigma _{r}!}\right/ \left[ 
\frac{\left( 2a_{r}\right) !}{a_{r}!}\frac{\left( 2b_{r}\right) !}{b_{r}!}%
\right] ^{1/2}=\left[ \frac{\left( 2\sigma _{r}\right) !}{\left(
2a_{r}\right) !}\frac{\left( 2\sigma _{r}\right) !}{\left( 2b_{r}\right) !}%
\frac{a_{r}!}{\sigma _{r}!}\frac{b_{r}!}{\sigma _{r}!}\right] ^{1/2}  \notag
\\
&=&\left[ \frac{\left( 2\sigma _{r}-1\right) \left( 2\sigma _{r}-3\right)
\cdots \left( 2\sigma _{r}-2l_{r}+1\right) }{\left( 2\sigma _{r}+1\right)
\left( 2\sigma _{r}+3\right) \cdots \left( 2\sigma _{r}+2l_{r}-1\right) }%
\right] ^{1/2}\leq 1.  \label{r-factor-clear}
\end{eqnarray}%
As $k\rightarrow \infty $, in the multi-index $\left( \overrightarrow{\alpha
_{+k}},\overrightarrow{\beta _{+k}}\right) $, $\sigma _{r}$ is changed to $%
\sigma _{r}+k$, but $l_{r}$ is unchanged, so the factor involving $j_{r}$ in 
$B\left( \overrightarrow{\alpha _{+k}},\overrightarrow{\beta _{+k}}\right) $
becomes%
\begin{equation*}
\left[ \frac{\left( 2\sigma _{r}+2k-1\right) \left( 2\sigma _{r}+2k-3\right)
\cdots \left( 2\sigma _{r}+2k-2l_{r}+1\right) }{\left( 2\sigma
_{r}+2k+1\right) \left( 2\sigma _{r}+2k+3\right) \cdots \left( 2\sigma
_{r}+2k+2l_{r}-1\right) }\right] ^{1/2}\rightarrow 1
\end{equation*}%
as $k\rightarrow \infty $, and the other factors not involving $j_{r}$ are
unchanged. This gives the limit $B\left( \overrightarrow{\alpha _{\ast }},%
\overrightarrow{\beta _{\ast }}\right) $.

For $\left( \ref{B_jr_b}\right) $, w.l.o.g we can assume $\left\vert 
\overrightarrow{\alpha }\right\vert +\left\vert \overrightarrow{\beta }%
\right\vert $ is even (otherwise we can replace $\overrightarrow{\beta }$ by 
$\overrightarrow{\beta }_{+}$ before we take limit). In the multi-index $%
\left( \overrightarrow{\alpha },\overrightarrow{\beta _{+2k}}\right) $, $%
\sigma _{r}$ and $l_{r}$ are changed to $\sigma _{r}+k$ and $l_{r}+k$
respectively, so the factor involving $j_{r}$ in $B\left( \overrightarrow{%
\alpha },\overrightarrow{\beta _{+2k}}\right) $ becomes%
\begin{equation*}
\left[ \frac{\left( 2\sigma _{r}+2k-1\right) \left( 2\sigma _{r}+2k-3\right)
\cdots \left( 2\sigma _{r}-2l_{r}+1\right) }{\left( 2\sigma _{r}+2k+1\right)
\left( 2\sigma _{r}+2k+3\right) \cdots \left( 2\sigma
_{r}+4k+2l_{r}-1\right) }\right] ^{1/2}\leq \left[ \frac{2\sigma
_{r}-2l_{r}+1}{2\sigma _{r}+4k+2l_{r}-1}\right] ^{1/2}\rightarrow 0
\end{equation*}%
as $k\rightarrow \infty $. Therefore $B\left( \overrightarrow{\alpha },%
\overrightarrow{\beta _{+2k}}\right) \rightarrow 0$. By our definition $%
B\left( \overrightarrow{\alpha },\overrightarrow{\beta _{+\left( 2k+1\right)
}}\right) =0$. Combining the two cases, $\lim_{k\rightarrow \infty }B\left( 
\overrightarrow{\alpha },\overrightarrow{\beta _{+k}}\right) =0$.
\end{proof}

\begin{corollary}
\label{non-degn} $-1<B\left( \overrightarrow{\alpha },\overrightarrow{\beta }%
\right) \leq 1$, and $B\left( \overrightarrow{\alpha },\overrightarrow{\beta 
}\right) =1$ if and only if $\overrightarrow{\alpha }=\overrightarrow{\beta }
$.
\end{corollary}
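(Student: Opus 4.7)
The plan is to deduce both bounds directly from the explicit factorization of $|B(\vec{\alpha},\vec{\beta})|$ that was worked out inside the proof of Proposition \ref{repeated-diffrential}, together with a one-line Cauchy--Schwarz observation.

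First, I would dispatch the coarse bound $-1 \le B(\vec{\alpha},\vec{\beta}) \le 1$. By Theorem \ref{high-jet-relation} either $B(\vec{\alpha},\vec{\beta}) = 0$, in which case the bound is trivial, or
$$B(\vec{\alpha},\vec{\beta}) \;=\; \lim_{t \to 0_+} \frac{\langle D^{\vec{\alpha}}\Phi_t(x),\, D^{\vec{\beta}}\Phi_t(x)\rangle}{|D^{\vec{\alpha}}\Phi_t(x)|\, |D^{\vec{\beta}}\Phi_t(x)|},$$
and each term in the limit lies in $[-1,1]$ by the Cauchy--Schwarz inequality in $l^{2}$, so the limit does too.

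Next I would characterize exactly when $|B(\vec{\alpha},\vec{\beta})| = 1$. If some $a_r + b_r$ is odd, then $B = 0$ by Definition \ref{constants}, so this case never realizes $|B| = 1$ (and in that situation $\vec{\alpha} \ne \vec{\beta}$ anyway). If every $a_r + b_r$ is even, identity $(\ref{r-factor-clear})$ writes the $r$-th factor of $|B(\vec{\alpha},\vec{\beta})|$ as
$$\left[\frac{(2\sigma_r - 1)(2\sigma_r - 3)\cdots(2\sigma_r - 2l_r + 1)}{(2\sigma_r + 1)(2\sigma_r + 3)\cdots(2\sigma_r + 2l_r - 1)}\right]^{1/2} \;\le\; 1,$$
with $l_r = |a_r - b_r|/2$ and equality if and only if $l_r = 0$, i.e.\ $a_r = b_r$. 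Hence $|B(\vec{\alpha},\vec{\beta})| = 1$ forces $a_r = b_r$ for every distinct index $j_r \in \{\vec{\alpha}\} \amalg \{\vec{\beta}\}$ (with the convention that an index absent from one side has multiplicity $0$ there), which is precisely $\vec{\alpha} = \vec{\beta}$ as multisets. Conversely, $\vec{\alpha} = \vec{\beta}$ makes every factor equal to $1$ and the overall sign $(-1)^{(|\vec{\alpha}|-|\vec{\beta}|)/2} = (-1)^{0} = +1$, giving $B(\vec{\alpha},\vec{\alpha}) = 1$.

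Finally, the strict lower bound $B(\vec{\alpha},\vec{\beta}) > -1$ follows by contradiction: were $B = -1$, then $|B| = 1$ would force $\vec{\alpha} = \vec{\beta}$ by the previous paragraph, whereupon $B = +1$, impossible. I do not foresee any genuine obstacle, since the inequality $(\ref{r-factor-clear})$ is already in hand. The one bookkeeping point to be careful about is that the enumeration of distinct indices in $\{\vec{\alpha}\} \amalg \{\vec{\beta}\}$ must also include indices occurring in only one of $\vec{\alpha}$ or $\vec{\beta}$, so that the corresponding $a_r = 0$ or $b_r = 0$ still contributes a factor strictly less than $1$ to the product; this is already implicit in Definition \ref{constants}.
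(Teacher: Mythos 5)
Your proof is correct, and it takes a genuinely more direct route than the paper's for the key step. For the coarse bound $-1\le B\le 1$ you appeal to Cauchy--Schwarz in $l^2$ via the geometric realization of $B$ in Theorem~\ref{high-jet-relation}; the paper instead reads the bound off the combinatorial product in $(\ref{r-factor-clear})$. Both are fine, and in fact the paper's is more self-contained (it doesn't pass back through the analytic limit), but yours is an honest shortcut. The real divergence is in the equality case. The paper proves $|B(\vec\alpha,\vec\beta)|=1\Rightarrow\vec\alpha=\vec\beta$ by contradiction using the strict monotonicity $(\ref{monotone-angle})$ from Proposition~\ref{repeated-diffrential}: if $\vec\alpha\neq\vec\beta$, adding a discrepant index $j_r$ to both would push $|B|$ strictly above $1$, contradicting the bound. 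You instead observe that each factor of $|B|$ in $(\ref{r-factor-clear})$ is $\le 1$ with equality iff $l_r=0$, so $|B|=1$ directly forces all $l_r=0$ and hence $\vec\alpha=\vec\beta$; no appeal to the monotone-angle argument is needed. For $B>-1$ the paper tracks the overall sign $(-1)^{(|\vec\alpha|-|\vec\beta|)/2}$ and then shows $|B|<1$ under that sign condition, whereas you simply invoke your own equality characterization: $B=-1$ would give $|B|=1$, hence $\vec\alpha=\vec\beta$, hence $B=+1$, a contradiction. Your version eliminates the separate sign bookkeeping. Your closing remark about indices present in only one of $\vec\alpha,\vec\beta$ (so $a_r=0$ or $b_r=0$) is the right caution, and those factors are indeed covered by $(\ref{r-factor-clear})$ since $\sigma_r\ge l_r$ still holds, keeping the product well-defined and strictly less than $1$ when $l_r\ge 1$.
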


\begin{proof}
From $\left( \ref{r-factor-clear}\right) $ we see for any multi-indices $%
\overrightarrow{\alpha }$ and $\overrightarrow{\beta }$, 
\begin{equation}
\left\vert B\left( \overrightarrow{\alpha },\overrightarrow{\beta }\right)
\right\vert \leq 1.  \label{1-bound}
\end{equation}%
If $B\left( \overrightarrow{\alpha },\overrightarrow{\beta }\right) =1$ but $%
\overrightarrow{\alpha }\neq \overrightarrow{\beta }$, then there must be
some index $j_{r\text{ }}$ appearing with \emph{different} multiplicities in 
$\overrightarrow{\alpha }$ and $\overrightarrow{\beta }$. We can construct $%
\overrightarrow{\alpha _{+}}$ and $\overrightarrow{\beta _{+}}$ by adding
the index $j_{r}$. By $\left( \ref{monotone-angle}\right) $ (notice the
equality can \emph{not} hold by the condition on $j_{r}$), we have%
\begin{equation*}
\left\vert B\left( \overrightarrow{\alpha }_{+},\overrightarrow{\beta }%
_{+}\right) \right\vert >\left\vert B\left( \overrightarrow{\alpha },%
\overrightarrow{\beta }\right) \right\vert =1,
\end{equation*}%
contradicting with $\left( \ref{1-bound}\right) $. Therefore $%
\overrightarrow{\alpha }=\overrightarrow{\beta }$.

We claim $B\left( \overrightarrow{\alpha },\overrightarrow{\beta }\right)
\neq -1$. Otherwise, by the sign of $B\left( \overrightarrow{\alpha },%
\overrightarrow{\beta }\right) $, we see $\frac{\left\vert \overrightarrow{%
\alpha }\right\vert -\left\vert \overrightarrow{\beta }\right\vert }{2}$
must be odd, especially $\overrightarrow{\alpha }\neq \overrightarrow{\beta }
$. Therefore there is some $l_{r}\neq 0$. By $\left( \ref{r-factor-clear}%
\right) $, this makes the $r$-th factor in $B\left( \overrightarrow{\alpha },%
\overrightarrow{\beta }\right) $ have absolute value strictly less than $1$,
so $\left\vert B\left( \overrightarrow{\alpha },\overrightarrow{\beta }%
\right) \right\vert <1$. But this contradicts with $\left\vert B\left( 
\overrightarrow{\alpha },\overrightarrow{\beta }\right) \right\vert
=\left\vert -1\right\vert =1$.
\end{proof}

\subsection{Lattice geometry on $\mathbb{Z}_{+}^{n}$}

In this subsection, we use the high-jet relations of $\Phi _{t}$ in Theorem %
\ref{high-jet-relation} to put a metric $d$ on the lattice space 
\begin{equation*}
\mathbb{Z}_{+}^{n}=\left\{ \left( m_{1},\cdots ,m_{n}\right) |m_{j}\in 
\mathbb{Z}\text{, }m_{j}\geq 0\text{ for }1\leq j\leq n\right\} .
\end{equation*}%
The motivation is to visualize these relations by the geometry of $\left( 
\mathbb{Z}_{+}^{n},d\right) $.

For any multi-index $\overrightarrow{\alpha }$ with indices in $\left\{
1,2,\cdots ,n\right\} $, taking the multiplicity of its each index, we
obtain a point $z\in $ $\mathbb{Z}_{+}^{n}$; conversely we can use $z$ to
recover $\overrightarrow{\alpha }$. For example, if we let $%
z_{1},z_{2},z_{3} $ be the three axes of $\mathbb{Z}_{+}^{3}$, then the
multi-index $\overrightarrow{\alpha }=\left( 1,1,2,3\right) $ has $1$ with
multiplicity $2 $, $2$ with multiplicity $1$, and $3$ with multiplicity $1$.
So $\overrightarrow{\alpha }$ correspond to $z=\left( 2,1,1\right) \in 
\mathbb{Z}_{+}^{3}$. \ With this identification, we have

\begin{definition}
\label{distance-ab}(Angle distance) For any two multi-indices $%
\overrightarrow{\alpha }$ and $\overrightarrow{\beta }$ in $\mathbb{Z}%
_{+}^{n}$, we define their \emph{angle distance} to be%
\begin{equation}
d\left( \overrightarrow{\alpha },\overrightarrow{\beta }\right) =\cos
^{-1}\left( B\left( \overrightarrow{\alpha },\overrightarrow{\beta }\right)
\right) .  \label{Zn-distance}
\end{equation}
\end{definition}

By Theorem \ref{high-jet-relation}, $d\left( \overrightarrow{\alpha },%
\overrightarrow{\beta }\right) $ is the \emph{limiting distance} of $\frac{%
D^{\overrightarrow{\alpha }}\Phi _{t}\left( x\right) }{\left\vert D^{%
\overrightarrow{\alpha }}\Phi _{t}\left( x\right) \right\vert }$ and $\frac{%
D^{\overrightarrow{\beta }}\Phi _{t}\left( x\right) }{\left\vert D^{%
\overrightarrow{\beta }}\Phi _{t}\left( x\right) \right\vert }$ on the unit
sphere $S^{\infty }\subset l^{2}$ as $t\rightarrow 0_{+}$. Let $\theta
_{t}\left( \overrightarrow{\alpha },\overrightarrow{\beta }\right) $ be the
minimal distance of $\frac{D^{\overrightarrow{\alpha }}\Phi _{t}\left(
x\right) }{\left\vert D^{\overrightarrow{\alpha }}\Phi _{t}\left( x\right)
\right\vert }$ and $\frac{D^{\overrightarrow{\beta }}\Phi _{t}\left(
x\right) }{\left\vert D^{\overrightarrow{\beta }}\Phi _{t}\left( x\right)
\right\vert }$ on $S^{\infty }$, then $0\leq \theta _{t}\left( 
\overrightarrow{\alpha },\overrightarrow{\beta }\right) \leq \pi $, and 
\begin{equation}
\lim_{t\rightarrow 0_{+}}\theta _{t}\left( \overrightarrow{\alpha },%
\overrightarrow{\beta }\right) =d\left( \overrightarrow{\alpha },%
\overrightarrow{\beta }\right) .  \label{sphere-distance}
\end{equation}

\begin{lemma}
The angle distance $d$ defined in $\left( \ref{Zn-distance}\right) $ is a
metric on $\mathbb{Z}_{+}^{n}$. For any $\overrightarrow{\alpha }$ and $%
\overrightarrow{\beta }\in \mathbb{Z}_{+}^{n}$, $0\leq d\left( 
\overrightarrow{\alpha },\overrightarrow{\beta }\right) <\pi $.
\end{lemma}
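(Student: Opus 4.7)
The plan is to verify the four metric axioms in turn, using Corollary \ref{non-degn} for non-degeneracy and non-trivial positivity, and transferring the triangle inequality from the sphere $S^{\infty}\subset l^{2}$ via the limit relation $(\ref{sphere-distance})$.

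First I would note that $d$ is well-defined and $0\le d(\vec{\alpha},\vec{\beta})<\pi$: by Corollary \ref{non-degn} we have $-1<B(\vec{\alpha},\vec{\beta})\le 1$, so $\cos^{-1}(B(\vec{\alpha},\vec{\beta}))$ lies in $[0,\pi)$. Symmetry $d(\vec{\alpha},\vec{\beta})=d(\vec{\beta},\vec{\alpha})$ follows directly from $B(\vec{\alpha},\vec{\beta})=B(\vec{\beta},\vec{\alpha})$, which is clear from Definition \ref{constants}. For non-degeneracy, $d(\vec{\alpha},\vec{\beta})=0$ iff $B(\vec{\alpha},\vec{\beta})=1$, which by Corollary \ref{non-degn} holds iff $\vec{\alpha}=\vec{\beta}$.

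The remaining axiom is the triangle inequality. Here I would use $(\ref{sphere-distance})$: pick any $x\in M$ and set
\[
u_{t}^{\vec{\gamma}}=\frac{D^{\vec{\gamma}}\Phi_{t}(x)}{|D^{\vec{\gamma}}\Phi_{t}(x)|}\in S^{\infty},
\]
which is well-defined for small $t>0$ by Theorem \ref{high-jet-relation}(3) (the denominator is positive). The function $\theta_{t}(\vec{\alpha},\vec{\beta})=\cos^{-1}\langle u_{t}^{\vec{\alpha}},u_{t}^{\vec{\beta}}\rangle$ is the geodesic distance on $S^{\infty}$ between $u_{t}^{\vec{\alpha}}$ and $u_{t}^{\vec{\beta}}$, and the spherical geodesic distance is a genuine metric, so
\[
\theta_{t}(\vec{\alpha},\vec{\gamma})\le \theta_{t}(\vec{\alpha},\vec{\beta})+\theta_{t}(\vec{\beta},\vec{\gamma})
\]
for every $t>0$. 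Taking $t\to 0_{+}$ and invoking $(\ref{sphere-distance})$ (i.e. $\theta_{t}\to d$) yields the triangle inequality for $d$.

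The only point requiring any care is ensuring that the convergence $\theta_{t}\to d$ really goes through for each of the three pairs simultaneously; this is immediate from Theorem \ref{high-jet-relation}, which guarantees that $\langle u_{t}^{\vec{\alpha}},u_{t}^{\vec{\beta}}\rangle\to B(\vec{\alpha},\vec{\beta})\in(-1,1]$ and thus $\cos^{-1}\langle u_{t}^{\vec{\alpha}},u_{t}^{\vec{\beta}}\rangle\to \cos^{-1}B(\vec{\alpha},\vec{\beta})=d(\vec{\alpha},\vec{\beta})$ by continuity of $\cos^{-1}$ on $(-1,1]$. The strict bound $d(\vec{\alpha},\vec{\beta})<\pi$ was already recorded; no diameter issue arises because Corollary \ref{non-degn} rules out antipodal limits. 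I expect no serious obstacle: the proof is essentially the observation that a metric on $S^{\infty}$ descends to its limit under pointwise convergence of the inner products.
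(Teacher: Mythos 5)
Your proposal is correct and matches the paper's own argument essentially step for step: nondegeneracy and the range $[0,\pi)$ come from Corollary \ref{non-degn}, symmetry from the symmetry of $B\left( \overrightarrow{\alpha },\overrightarrow{\beta }\right)$, and the triangle inequality is inherited in the $t\rightarrow 0_{+}$ limit from the spherical geodesic distance $\theta_{t}$ via $\left( \ref{sphere-distance}\right)$. You spell out the limiting step a bit more explicitly than the paper does, but there is no difference in method.
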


\begin{proof}
To prove $d$ is a metric, we only need to verify that it is \emph{%
nondegenerate}, since the triangle inequality of $\theta _{t}\left( 
\overrightarrow{\alpha },\overrightarrow{\beta }\right) $ on $S^{\infty }$
is preserved when we take the limit in $\left( \ref{sphere-distance}\right) $
as $t\rightarrow 0_{+}$, and $d\left( \overrightarrow{\alpha },%
\overrightarrow{\beta }\right) =d\left( \overrightarrow{\beta },%
\overrightarrow{\alpha }\right) $ follows from the definition of $B\left( 
\overrightarrow{\alpha },\overrightarrow{\beta }\right) $. Suppose $d\left( 
\overrightarrow{\alpha },\overrightarrow{\beta }\right) =0$, then $B\left( 
\overrightarrow{\alpha },\overrightarrow{\beta }\right) =1$. From Corollary %
\ref{non-degn}, we have $\overrightarrow{\alpha }=\overrightarrow{\beta }$.
By Corollary \ref{non-degn}, $B\left( \overrightarrow{\alpha },%
\overrightarrow{\beta }\right) \in (-1,1]$, so $0\leq d\left( 
\overrightarrow{\alpha },\overrightarrow{\beta }\right) <\pi $.
\end{proof}

We can interpret the stabilization property in $\left( \ref{B_jr_ab}\right) $
of Proposition \ref{repeated-diffrential} in terms of the distance $d$ on $%
\mathbb{Z}_{+}^{n}$:

\begin{lemma}
\label{Stable}(Stabilization) For any coordinate vector $v=\frac{\partial }{%
\partial z^{j}}$, and any two vectors $\overrightarrow{\alpha },%
\overrightarrow{\beta }\in \mathbb{Z}_{+}^{n}$, we have 
\begin{eqnarray*}
\left\vert d\left( \overrightarrow{\alpha }+v,\overrightarrow{\beta }%
+v\right) -\frac{\pi }{2}\right\vert &\geq &\left\vert d\left( 
\overrightarrow{\alpha },\overrightarrow{\beta }\right) -\frac{\pi }{2}%
\right\vert , \\
\lim_{k\rightarrow \infty }d\left( \overrightarrow{\alpha }+kv,%
\overrightarrow{\beta }+kv\right) &=&d\left( \overrightarrow{\alpha }_{\ast
},\overrightarrow{\beta }_{\ast }\right) ,\text{ } \\
\lim_{k\rightarrow \infty }d\left( \overrightarrow{\alpha },\overrightarrow{%
\beta }+kv\right) &=&\frac{\pi }{2},
\end{eqnarray*}%
where $\overrightarrow{\alpha }_{\ast }$ and $\overrightarrow{\beta }_{\ast
} $ are obtained by setting the $j$-th component of $\overrightarrow{\alpha }
$ and $\overrightarrow{\beta }$ to be zero.
\end{lemma}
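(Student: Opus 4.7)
The plan is to translate each of the three assertions about the angle distance $d$ into a corresponding statement about the constant $B(\overrightarrow{\alpha},\overrightarrow{\beta})$ via the defining relation $B=\cos d$, and then read off the conclusion from Proposition \ref{repeated-diffrential}. The key preliminary observation is that the map $d\mapsto |\cos d|$, viewed on $[0,\pi)$, is a monotonically increasing function of $|d-\pi/2|$: writing $\phi=d-\pi/2\in[-\pi/2,\pi/2)$, one has $|\cos d|=|\sin\phi|$, which is strictly increasing in $|\phi|$. Under this correspondence, \textquotedblleft $|B|$ is large\textquotedblright\ is equivalent to \textquotedblleft $d$ is far from $\pi/2$.\textquotedblright

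For the first inequality, I identify the coordinate vector $v=\partial/\partial z^{j}$ with the operation of adjoining the index $j$ to a multi-index, so that $\overrightarrow{\alpha}+v$ is precisely the $\overrightarrow{\alpha}_{+}$ of Proposition \ref{repeated-diffrential} (taking $j_r=j$). Inequality $\left(\ref{monotone-angle}\right)$ then reads $|B(\overrightarrow{\alpha}+v,\overrightarrow{\beta}+v)|\geq|B(\overrightarrow{\alpha},\overrightarrow{\beta})|$, and combining this with the monotonicity above yields exactly $|d(\overrightarrow{\alpha}+v,\overrightarrow{\beta}+v)-\pi/2|\geq|d(\overrightarrow{\alpha},\overrightarrow{\beta})-\pi/2|$.

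For the second limit, formula $\left(\ref{B_jr_ab}\right)$ of Proposition \ref{repeated-diffrential} gives $\lim_{k\to\infty}B(\overrightarrow{\alpha}+kv,\overrightarrow{\beta}+kv)=B(\overrightarrow{\alpha}_{\ast},\overrightarrow{\beta}_{\ast})$, where $\overrightarrow{\alpha}_{\ast}$ and $\overrightarrow{\beta}_{\ast}$ are obtained by deleting all occurrences of $j$, which matches the description in the lemma (setting the $j$-th component to zero). By Corollary \ref{non-degn} every value of $B$ lies in $(-1,1]$, where $\cos^{-1}$ is continuous, so applying $\cos^{-1}$ to both sides yields $\lim_{k\to\infty}d(\overrightarrow{\alpha}+kv,\overrightarrow{\beta}+kv)=d(\overrightarrow{\alpha}_{\ast},\overrightarrow{\beta}_{\ast})$. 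For the third limit, formula $\left(\ref{B_jr_b}\right)$ gives $\lim_{k\to\infty}B(\overrightarrow{\alpha},\overrightarrow{\beta}+kv)=0$, and continuity of $\cos^{-1}$ at $0$ then gives the desired limit $\cos^{-1}(0)=\pi/2$.

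Honestly, there is no serious obstacle here: once the bijection between coordinate vectors $v\in\mathbb{Z}_{+}^{n}$ and indices $j_{r}$ is made explicit, the lemma is a direct translation of Proposition \ref{repeated-diffrential} through the continuous (on $(-1,1]$) bijection $\cos^{-1}:(-1,1]\to[0,\pi)$. The only point requiring any care is ensuring that the limits of $B$ actually lie in the range where $\cos^{-1}$ is continuous, and this is guaranteed by Corollary \ref{non-degn}; in particular, $B$ never approaches $-1$, so no pathological behavior at the endpoint $\pi$ can occur.
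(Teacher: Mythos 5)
Your proof is correct and takes exactly the route the paper intends: Lemma \ref{Stable} is stated immediately after Proposition \ref{repeated-diffrential} with no separate proof, precisely because it is the translation of that proposition through $d=\cos^{-1}(B)$ that you carry out. The one step that needs care — observing that $\lvert\cos d\rvert$ is a strictly increasing function of $\lvert d-\pi/2\rvert$ on $[0,\pi)$ so that the magnitude inequality $\left(\ref{monotone-angle}\right)$ transfers to the first displayed inequality — you handle correctly, and Corollary \ref{non-degn} is invoked in the right place to ensure the limit values of $B$ lie where $\cos^{-1}$ is defined.
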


The stabilization properties in the above lemma make each axis of $\left( 
\mathbb{Z}_{+}^{n},d\right) $ somehow looks like a real projective line. It
will be interesting to say more about the geometry of $\left( \mathbb{Z}%
_{+}^{n},d\right) $ as $\left\vert \overrightarrow{\alpha }\right\vert
\rightarrow \infty $.

In the following we are interested in the orthogonal relations of $D^{%
\overrightarrow{\alpha }}\Phi _{t}\left( x\right) $ as $t\rightarrow 0_{+}$.
From our definition, $B\left( \overrightarrow{\alpha },\overrightarrow{\beta 
}\right) =0$ if and only if some index appears with odd multiplicity in $%
\left\{ \overrightarrow{\alpha }\right\} \amalg \left\{ \overrightarrow{%
\beta }\right\} $. This can be reformulated as

\begin{lemma}
(Orthogonal relations) $d\left( \overrightarrow{\alpha },\overrightarrow{%
\beta }\right) =\frac{\pi }{2}$ if and only if $\overrightarrow{\alpha }-%
\overrightarrow{\beta }\notin \left( 2\mathbb{Z}\right) ^{n}$. Consequently,

\begin{enumerate}
\item Any two adjacent lattice points in $\mathbb{Z}_{+}^{n}$ have the angle
distance $\frac{\pi }{2}$;

\item $\mathbb{Z}_{+}^{n}/\left( 2\mathbb{Z}_{+}\right) ^{n}$ has $2^{n}$
\textquotedblleft orthogonal\textquotedblright\ cosets $A_{1},\cdots
,A_{2^{n}}\subset \mathbb{Z}_{+}^{n}$, such that for any two elements in
different cosets, their angle distance is $\frac{\pi }{2}$.
\end{enumerate}
\end{lemma}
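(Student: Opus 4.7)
The plan is to reduce everything to the parity observation built into Definition \ref{constants}, then read off the two consequences. Under the identification of a multi-index $\overrightarrow{\alpha}$ with the lattice point whose $j$-th coordinate is the multiplicity $a_j$ of $j$ in $\overrightarrow{\alpha}$ (and similarly for $\overrightarrow{\beta}$), the condition $\overrightarrow{\alpha}-\overrightarrow{\beta}\in(2\mathbb{Z})^n$ says $a_j-b_j$ is even for every $j$. Since $a_j-b_j\equiv a_j+b_j\pmod 2$, this is equivalent to the statement that every index appears with \emph{even} total multiplicity in $\{\overrightarrow{\alpha}\}\amalg\{\overrightarrow{\beta}\}$.

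First I would note that by Definition \ref{distance-ab} we have $d(\overrightarrow{\alpha},\overrightarrow{\beta})=\pi/2$ iff $B(\overrightarrow{\alpha},\overrightarrow{\beta})=0$. By Definition \ref{constants}, $B(\overrightarrow{\alpha},\overrightarrow{\beta})=0$ exactly when some $a_r+b_r$ is odd (equation (\ref{zero})); otherwise each $\sigma_r=(a_r+b_r)/2$ is a nonnegative integer, and formula (\ref{constant_B}) expresses $B(\overrightarrow{\alpha},\overrightarrow{\beta})$ as $\pm 1$ times a product of strictly positive rationals, hence is nonzero (this is also reconfirmed by Corollary \ref{non-degn}, which gives $B\in(-1,1]$ with $B=1$ only for $\overrightarrow{\alpha}=\overrightarrow{\beta}$; the positivity of each factor is the key point). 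Combining with the parity reformulation above yields the main equivalence: $d(\overrightarrow{\alpha},\overrightarrow{\beta})=\pi/2$ iff $\overrightarrow{\alpha}-\overrightarrow{\beta}\notin(2\mathbb{Z})^n$.

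For consequence (1), any two adjacent lattice points in $\mathbb{Z}_+^n$ differ by $\pm e_j$ for some standard basis vector $e_j$, and $\pm e_j\notin(2\mathbb{Z})^n$, so the main equivalence immediately gives angle distance $\pi/2$. For consequence (2), the quotient $\mathbb{Z}_+^n/(2\mathbb{Z})^n$ is indexed by the parity vector in $(\mathbb{Z}/2\mathbb{Z})^n$, giving $2^n$ cosets $A_1,\dots,A_{2^n}$; two points lying in different cosets have difference outside $(2\mathbb{Z})^n$, so again the main equivalence yields angle distance $\pi/2$ between them.

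There is no genuine obstacle here: the entire proof is unpacking definitions. The only place one has to be slightly careful is confirming that $B(\overrightarrow{\alpha},\overrightarrow{\beta})\neq 0$ whenever all $a_r+b_r$ are even, which is transparent from the explicit product formula (\ref{constant_B}) since each factor $\frac{(2\sigma_r)!/\sigma_r!}{[(2a_r)!/a_r!\cdot(2b_r)!/b_r!]^{1/2}}$ is a positive real.
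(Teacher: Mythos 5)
Your proof is correct and follows exactly the route the paper intends: the paper itself introduces the lemma as a "reformulation" of the statement that $B(\overrightarrow{\alpha},\overrightarrow{\beta})=0$ iff some index has odd total multiplicity in $\{\overrightarrow{\alpha}\}\amalg\{\overrightarrow{\beta}\}$, and you are simply filling in the parity translation $a_j-b_j\equiv a_j+b_j\pmod 2$ plus the observation that the product formula $(\ref{constant_B})$ is manifestly nonzero when all $a_r+b_r$ are even. The two consequences then follow by inspection, as you note.
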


This $\mathbb{Z}_{+}^{n}/\left( 2\mathbb{Z}_{+}\right) ^{n}$-grading of the $%
\infty $-jet space of $\Phi _{t}$ as $t\rightarrow 0$ may have future
applications.

In the following we give some estimate of $d\left( \overrightarrow{\alpha },%
\overrightarrow{\beta }\right) $. Let $d_{0}$ be the \textquotedblleft \emph{%
set distance}\textquotedblright\ function on $\mathbb{Z}_{+}^{n}$: for $%
\overrightarrow{\alpha }=\left( a_{1},\cdots ,a_{n}\right) $ and $%
\overrightarrow{\beta }=\left( b_{1},\cdots ,b_{n}\right) $,%
\begin{equation*}
d_{0}\left( \overrightarrow{\alpha },\overrightarrow{\beta }\right) =\Sigma
_{j=1}^{n}\left\vert a_{j}-b_{j}\right\vert =\left\vert \overrightarrow{%
\alpha }\bigtriangleup \overrightarrow{\beta }\right\vert ,
\end{equation*}%
where $\bigtriangleup $ is the difference of two sets $A$ and $B$: $%
A\bigtriangleup B=\left( A\backslash B\right) \cup \left( B\backslash
A\right) $.

Intuitively, if $d_{0}\left( \overrightarrow{\alpha },\overrightarrow{\beta }%
\right) $ is very large relative to $\left\vert \overrightarrow{\alpha }%
\right\vert +\left\vert \overrightarrow{\beta }\right\vert $, then $%
\overrightarrow{\alpha }$ and $\overrightarrow{\beta }$ are very different,
and we expect the angle $d\left( \overrightarrow{\alpha },\overrightarrow{%
\beta }\right) $ between the derivative vectors $D^{\overrightarrow{\alpha }%
}\Phi _{t}\left( x\right) $ and $D^{\overrightarrow{\beta }}\Phi _{t}\left(
x\right) $ to be not too close to $0$ or $\pi $ as $t\rightarrow 0_{+}$.
Indeed, we have

\begin{proposition}
(Distance comparison) There exists a constant $\delta >0$, such that 
\begin{equation}
\left\vert \cos d\left( \overrightarrow{\alpha },\overrightarrow{\beta }%
\right) \right\vert \leq 1-\delta \frac{\left\vert \overrightarrow{\alpha }%
\bigtriangleup \overrightarrow{\beta }\right\vert }{\left\vert 
\overrightarrow{\alpha }\right\vert +\left\vert \overrightarrow{\beta }%
\right\vert },  \label{distance-comparison}
\end{equation}%
for any $\overrightarrow{\alpha },\overrightarrow{\beta }\in \mathbb{Z}%
_{+}^{n}$.
\end{proposition}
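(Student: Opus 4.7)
The plan is to use the explicit product formula for $\left\vert B(\overrightarrow{\alpha },\overrightarrow{\beta })\right\vert ^{2}$ already derived in $(\ref{r-factor-clear})$ and to extract a \emph{linear} (in $l_{r}:=\left\vert a_{r}-b_{r}\right\vert /2$) lower bound on $1-P_{r}$ for each of the squared factors $P_{r}$. If some $a_{r}+b_{r}$ is odd then $B=0$ by definition and the inequality holds trivially, so I assume all $a_{r}+b_{r}$ are even. Write $m_{r}=\min (a_{r},b_{r})$, $L=\sum _{r}l_{r}=\left\vert \overrightarrow{\alpha }\bigtriangleup \overrightarrow{\beta }\right\vert /2$, and $S=\sum _{r}(a_{r}+b_{r})=\left\vert \overrightarrow{\alpha }\right\vert +\left\vert \overrightarrow{\beta }\right\vert $. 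Using the elementary $1-\left\vert B\right\vert \geq (1-\left\vert B\right\vert ^{2})/2$, it suffices to prove $1-\left\vert B\right\vert ^{2}\geq L/S$, which yields the proposition with $\delta =1/4$.

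First I would rewrite the $r$-th squared factor, by reindexing the ratio appearing in $(\ref{r-factor-clear})$, as
\begin{equation*}
P_{r}=\prod_{k=1}^{l_{r}}\frac{2m_{r}+2k-1}{2\sigma _{r}+2k-1},
\end{equation*}
a product of $l_{r}$ factors each lying in $(0,1]$. The key technical step is the single-factor bound: since a product of numbers in $(0,1]$ is dominated by any one of its factors, taking the smallest factor (at $k=1$) yields
\begin{equation*}
P_{r}\leq \frac{2m_{r}+1}{2\sigma _{r}+1},\qquad \text{hence}\qquad 1-P_{r}\geq \frac{2l_{r}}{a_{r}+b_{r}+1}\geq \frac{l_{r}}{a_{r}+b_{r}}.
\end{equation*}
Next, because every $P_{r}\leq 1$, the full product $\left\vert B\right\vert ^{2}=\prod _{r}P_{r}$ is bounded above by $\min _{r}P_{r}$, so $1-\left\vert B\right\vert ^{2}\geq \max _{r}(1-P_{r})\geq \max _{r}l_{r}/(a_{r}+b_{r})$. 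The quantity $L/S$ is a weighted average of the ratios $l_{r}/(a_{r}+b_{r})$ with weights $a_{r}+b_{r}$, hence dominated by their maximum, which gives $1-\left\vert B\right\vert ^{2}\geq L/S$ and closes the argument.

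I expect the main obstacle to be locating a \emph{linear}-in-$l_{r}$ lower bound on $1-P_{r}$. The natural first attempts---log-convexity of $\Gamma $ applied to the identity $P_{r}=\Gamma (\sigma _{r}+1/2)^{2}/[\Gamma (m_{r}+1/2)\Gamma (\max (a_{r},b_{r})+1/2)]$, or Cauchy--Schwarz on $\sum _{r}l_{r}^{2}/(a_{r}+b_{r})$ combined with $\ln P_{r}\leq -l_{r}^{2}/(a_{r}+b_{r})$---only deliver $\left\vert B\right\vert ^{2}\leq \exp (-cL^{2}/S)$, which is \emph{quadratic} in $L/S$ and therefore insufficient when $L/S$ is small. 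Isolating the smallest factor in the product defining $P_{r}$, rather than summing logarithms, is what forces the correct linear rate and lets the elementary weighted-max step finish the proof.
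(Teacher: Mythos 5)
Your proof is correct and takes a genuinely different, more elementary route than the paper's. The paper's proof first establishes the single-factor bound $P_r \leq 1 - l_r/(2\sigma_r)$ by using the concavity of $\ln\left(\frac{1-x}{1+x}\right)$ together with Jensen's inequality, then combines factors via the arithmetic--geometric mean inequality and the mediant inequality $\sum_r l_r/\sigma_r \geq (\sum_r l_r)/(\sum_r \sigma_r)$, and finally appeals to the calculus fact that $(1 - x/(2s))^s \leq 1-\delta x$ on $[0,1]$ for a suitable $\delta > 0$. You replace all of this analytic machinery with two applications of the trivial observation that a product of numbers in $(0,1]$ is bounded by any one of its factors: first to isolate the $k=1$ factor and obtain $P_r \leq (2m_r+1)/(2\sigma_r+1)$ (in fact slightly sharper than the paper's intermediate bound $1 - l_r/(2\sigma_r)$), then across $r$ to obtain $1-|B|^2 \geq \max_r (1-P_r)$, after which the weighted-average step finishes. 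This buys an explicit, dimension-independent constant $\delta = 1/4$ and avoids both convexity arguments and any $s$-dependent inequality. One small quibble with your closing commentary: the worry that an $\exp(-cL^2/S)$ bound is ``insufficient when $L/S$ is small'' overlooks that $L$ is a nonnegative integer, so $L^2/S \geq L/S$ whenever $L \geq 1$; but this plays no role in your actual argument, which is sound as written.
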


\begin{proof}
Inequality $\left( \ref{distance-comparison}\right) $ is equivalent to the
following combinatorial inequality%
\begin{equation}
\Pi _{r=1}^{s}\frac{\left( 2\sigma _{r}-1\right) \left( 2\sigma
_{r}-3\right) \cdots \left( 2\sigma _{r}-2l_{r}+1\right) }{\left( 2\sigma
_{r}+1\right) \left( 2\sigma _{r}+3\right) \cdots \left( 2\sigma
_{r}+2l_{r}-1\right) }\leq 1-\delta \frac{\Sigma _{r=1}^{s}l_{r}}{\Sigma
_{r=1}^{s}\sigma _{r}},  \label{distance-compare}
\end{equation}%
where $\sigma _{r}\geq l_{r}\geq 0$ are integers. We first prove $\left( \ref%
{distance-compare}\right) $ when $s=1$. Let $f\left( x\right) =\ln \left( 
\frac{1-x}{1+x}\right) $, then $\acute{f}^{\prime \prime }\left( x\right)
\leq 0$ on $[0,1)$. Therefore%
\begin{eqnarray}
\frac{f\left( \frac{1}{2\sigma _{r}}\right) +\cdots +f\left( \frac{2l_{r}-1}{%
2\sigma _{r}}\right) }{l_{r}} &\leq &f\left( \frac{\frac{1}{2\sigma _{r}}%
+\cdots +\frac{2l_{r}-1}{2\sigma _{r}}}{l_{r}}\right) =f\left( \frac{l_{r}}{%
2\sigma _{r}}\right) ,\text{ i.e.}  \notag \\
\left( \frac{1-\frac{1}{2\sigma _{r}}}{1+\frac{1}{2\sigma _{r}}}\right)
\cdots \left( \frac{1-\frac{2l_{r}-1}{2\sigma _{r}}}{1+\frac{2l_{r}-1}{%
2\sigma _{r}}}\right) &\leq &\left( \frac{1-\frac{l_{r}}{2\sigma _{r}}}{1+%
\frac{l_{r}}{2\sigma _{r}}}\right) ^{l_{r}}\leq \frac{1-\frac{l_{r}}{2\sigma
_{r}}}{1+\frac{l_{r}}{2\sigma _{r}}}\leq 1-\frac{l_{r}}{2\sigma _{r}}.
\label{single-product}
\end{eqnarray}%
For $s>1$, using $\left( \ref{single-product}\right) $ we have%
\begin{eqnarray*}
&&\Pi _{r=1}^{s}\frac{\left( 2\sigma _{r}-1\right) \left( 2\sigma
_{r}-3\right) \cdots \left( 2\sigma _{r}-2l_{r}+1\right) }{\left( 2\sigma
_{r}+1\right) \left( 2\sigma _{r}+3\right) \cdots \left( 2\sigma
_{r}+2l_{r}-1\right) } \\
&\leq &\Pi _{r=1}^{s}\left( 1-\frac{l_{s}}{2\sigma _{s}}\right) \leq \left( 
\frac{\Sigma _{r=1}^{s}\left( 1-\frac{l_{r}}{2\sigma _{r}}\right) }{s}%
\right) ^{s}\text{ (arithmetic-geometric inequality)} \\
&=&\left( 1-\frac{1}{2s}\Sigma _{r=1}^{s}\frac{l_{r}}{\sigma _{r}}\right)
^{s}\leq \left( 1-\frac{1}{2s}\left( \frac{\Sigma _{r=1}^{s}l_{r}}{\Sigma
_{r=1}^{s}\sigma _{r}}\right) \right) ^{s}\text{ (since }\frac{a}{b}+\frac{c%
}{d}\geq \frac{a+c}{b+d}\text{ for }a,b,c,d>0\text{)} \\
&\leq &1-\delta \frac{\Sigma _{r=1}^{s}l_{r}}{\Sigma _{r=1}^{s}\sigma _{r}}%
\text{, }
\end{eqnarray*}%
$\allowbreak $where the last inequality is because there exists a $\delta >0$
such that $\left( 1-\frac{1}{2s}x\right) ^{s}\leq 1-\delta x$ on $\left[ 0,1%
\right] $.
\end{proof}

Inequality $\left( \ref{distance-comparison}\right) $ gives a quantitative
control of the linear independence of the derivative vectors $D^{%
\overrightarrow{\alpha }}\Phi _{t}\left( x\right) $ and $D^{\overrightarrow{%
\beta }}\Phi _{t}\left( x\right) $ as $t\rightarrow 0_{+}$. When $\left\vert 
\overrightarrow{\alpha }\right\vert =\left\vert \overrightarrow{\beta }%
\right\vert =2$, the \emph{uniform} linear independence of $D^{%
\overrightarrow{\alpha }}\Phi _{t}\left( x\right) $ and $D^{\overrightarrow{%
\beta }}\Phi _{t}\left( x\right) $ as $t\rightarrow 0_{+}$ played an
important role in \cite{WZ} to construct isometric embeddings by the
implicit function theorem. It will be interesting to see if there are other
geometric problems where the higher-jet linear independence property can
play role.

\bigskip

{\small Ke Zhu}

{\small Department of Mathematics, Harvard University, Cambridge, MA 02138}

{\small Email: kzhu@math.harvard.edu}

\end{document}